\newcommand{\C}{{\mathbb C} }
\newcommand{\cA}{{\mathcal A} }
\newcommand{\cE}{{\mathcal E} }
\newcommand{\cL}{{\mathcal L} }
\newcommand{\cM}{{\mathcal M} }
\newcommand{\cO}{{\mathcal O} }
\newcommand{\cT}{{\mathcal T} }
\newcommand{\cX}{{\mathcal X} }
\newcommand{\cK}{{\mathcal K} }
\newcommand{\wh}{\widehat}
\newcommand{\wt}{\widetilde}
\newcommand{\pt}{\partial}
\def\ol#1{{\overline{#1}}}
\newtheorem*{theorem*}{Theorem}
\newtheorem{maintheorem}{Main Theorem}
\newtheorem*{Maintheorem*}{Main Theorem}
\newtheorem{theorem}{Theorem}
\newtheorem{definition}{Definition}
\newtheorem{lemma}{Lemma}
\newtheorem{proposition}{Proposition}
\newtheorem{corollary}{Corollary}
\def\ke{K{\"a}h\-ler-Ein\-stein }
\def\ks{Ko\-dai\-ra-Spen\-cer }
\def\ka{K{\"a}h\-ler }
\def\wp{Weil-Pe\-ters\-son }
\def\tei{Teich\-mül\-ler }
\def\ii{\sqrt{-1}}
\def\ddb{\sqrt{-1}\partial\overline{\partial}}
\def\C{\mathbb{C}}
\def\cinf{C^\infty}
\def\gab{{g_{\alpha\ol\beta}}}
\def\RP{$R^{n-p}f_*\Omega^p_{\cX/S}(\cK_{\cX/S}^{\otimes m})$\ }
\def\we{\wedge}
\begin{document}

\title[Curvature of $R^{n-p}f_*\Omega^p_{\cX/S}(\cK_{\cX/S}^{\otimes m})$
 and applications]{Curvature of $R^{n-p}f_*\Omega^p_{\cX/S}(\cK_{\cX/S}^{\otimes m})$\\ and applications}

\author{Georg Schumacher}
\address{Fachbereich Mathematik und Informatik,
Philipps-Universit\"at Marburg, Lahnberge, Hans-Meerwein-Straße, D-35032
Marburg,Germany}
\email{schumac@mathematik.uni-marburg.de}
\date{}
\maketitle
\begin{abstract}
Given an effectively parameterized family $f:\cX\to S$ of canonically
polarized manifolds, the \ke metrics on the fibers induce a hermitian
metric on the relative canonical bundle $\cK_{\cX/S}$. We use a global
elliptic equation to show that this metric is strictly positive everywhere
and give estimates.

The direct images $R^{n-p}f_*\Omega^p_{\cX/S}(\cK_{\cX/S}^{\otimes m})$,
$m > 0$, carry induced natural hermitian metrics. We prove an explicit
formula for the curvature tensor of these direct images. The formula for
$p=n$, implies that $f_*(\cK_{\cX/S}^{\otimes (m+1)})$ is Nakano-positive
with estimates (for effectively parameterized families). We apply it to
the morphisms $S^p\cT_S \to R^pf_*\Lambda^p\cT_{\cX/S}$ induced by the \ks
map and obtain a differential geometric proof for hyperbolicity properties
of $\cM_{\text{can}}$. Similar results hold for families of polarized
Ricci-flat manifolds. These will appear elsewhere.\footnote{}
\end{abstract}
\footnotetext{Dissertation in progress.}

\section{Introduction}
For any holomorphic family $f: \cX \to S$ of canonically polarized,
complex manifolds, the unique \ke metrics on the fibers define an
intrinsic metric on the relative canonical bundle $\cK_{\cX/S}$. The
construction is functorial in the sense of compatibility with base
changes. By definition, its curvature form has at least as many positive
eigenvalues as the dimension of the fibers indicates. It turned
\cite{sch-preprint} out that it is {\em strictly positive}, provided the
induced deformation is not infinitesimally trivial at the corresponding
point of the base.

Actually the first variation of the metric tensor in a family of compact
\ke manifolds contains the information about the induced deformation, more
precisely, it contains the harmonic representatives $A_s=
A^\alpha_{s\ol\beta}\pt_\alpha dz^\ol\beta $ of the \ks classes
$\rho(\pt/\pt s)$. The positivity of the hermitian metric will be measured
in terms of a certain global function. Essential is an elliptic equation
on the fibers, which relates this function to the pointwise norm of the
harmonic \ks forms. The ''strict'' positivity of the corresponding
(fiberwise) operator $(\Box + id)^{-1}$, where $\Box$ is the complex
Laplacian, can be seen in a direct way (cf. \cite{sch-preprint}).  For
families of compact Riemann surfaces the elliptic equation was previously
derived in terms of automorphic forms by Wolpert \cite{wo}. Later in
higher dimensions a similar equation arose in the work of Siu
\cite{siu:canlift} for families of canonical polarized manifolds.

In this article, we will reduce estimates for the positivity of the
curvature of $\cK_{\cX/S}$ on $\cX$ to estimates of the resolvent kernel
of the above integral operator, whose its positivity was already shown by
Yosida in \cite{yos}. Finally estimates for the resolvent kernel follow
from the estimates for the heat kernel, which were achieved by Cheeger and
Yau in \cite{c-y}.

The positivity of the relative canonical bundle is important, when
estimating the curvature of the direct image sheaves
$R^{n-p}f_*\Omega^p_{\cX/S}(\cK_{\cX/S}^{\otimes m})$. These are equipped
with a natural hermitian metric that is induced by the $L^2$-inner product
of harmonic tensors on the fibers of $f$. We will give an explicit formula
for the curvature tensor.

A main motivation of our approach is the study of the curvature of the
classical \wp metric, in particular the computation of the curvature
tensor by Tromba \cite{tr} and Wolpert \cite{wo}. It immediately implies
the hyperbolicity of the classical \tei space.  The curvature tensor of
the generalized \wp metric for families of metrics with constant negative
Ricci curvature was explicitly  computed by Siu in \cite{siu:canlift}; in
\cite{sch:curv} a formula in terms of elliptic operators and harmonic \ks
tensors was derived. However, the curvature of the generalized \wp metric
seems not to satisfy any negativity condition. This difficulty was
overcome in the work of Viehweg and Zuo in \cite{v-z}. Their approach to
hyperbolicity makes use of the period map in the sense of Griffiths.

On the other hand our results are motivated by Berndtsson's result on the
Nakano-positivity for certain direct images. In \cite{sch-preprint} we
showed that the positivity of $f_*\cK_{\cX/S}^{\otimes 2}$ together with
the curvature of the generalized \wp metric is sufficient to imply a
hyperbolicity result for moduli of canonically polarized complex manifolds
in dimension two.

For ample $K_X$ the cohomology groups $H^{n-p}(X,\Omega^p_X(K^{\otimes
m}_X))$ are critical with respect to the Kodaira-Nakano vanishing theorem.
The understanding of this situation is the other main motivation. We will
consider the relative case. Let $A_{i \ol\beta}^\alpha(z,s) \pt_\alpha
dz^{\ol\beta}$ be a harmonic \ks form. Then for  $s \in S$ the cup product
together with the contraction defines
\begin{eqnarray*}
A_{i \ol\beta}^\alpha \pt_\alpha dz^{\ol\beta}\cup \textvisiblespace :
\cA^{0,n-p}(\cX_s,\Omega^p_{\cX_s}(\cK_{\cX_s}^{\otimes m})) &\to&
\cA^{0,n-p+1}(\cX_s,\Omega^{p-1}_{\cX_s}(\cK_{\cX_s}^{\otimes m}))\\ A_{\ol \jmath
\alpha}^\ol\beta \pt_\ol\beta dz^{\alpha}\cup \textvisiblespace :
\cA^{0,n-p}(\cX_s,\Omega^p_{\cX_s}(\cK_{\cX_s}^{\otimes m})) &\to&
\cA^{0,n-p-1}(\cX_s,\Omega^{p+1}_{\cX_s}(\cK_{\cX_s}^{\otimes m})).
\end{eqnarray*}
\enlargethispage{.5cm} We will apply the above product to harmonic
$(0,n-p)$-forms. In general the result is not harmonic. We denote the
pointwise $L^2$ inner product by a dot.
\begin{maintheorem}
The curvature tensor for $R^{n-p}f_*\Omega^p_{\cX/S}(\cK_{\cX/S}^{\otimes
m})$ is given by
\begin{eqnarray*}
R_{i\ol\jmath}^{\phantom{{i\ol\jmath}}\ol\ell k}(s)&=& m \int_{\cX_s}
\left( \Box + 1 \right)^{-1}(A_i\cdot A_\ol\jmath) \cdot(\psi^k \cdot
\psi^\ol\ell) g\/ dV\nonumber\\
&& \quad + m \int_{\cX_s} \left( \Box + m \right)^{-1} (A_i\cup\psi^k)
\cdot (A_\ol\jmath \cup \psi^\ol\ell) g\/ dV \\
&& \quad + m \int_{\cX_s} \left( \Box - m \right)^{-1}
(A_i\cup\psi^\ol\ell)\cdot (A_\ol\jmath \cup \psi^k) g\/ dV.
\nonumber
\end{eqnarray*}
\end{maintheorem}
The potentially negative third term is not present for $p=n$, i.e.\ for
$f_*\cK_{\cX/S}^{\otimes(m+1)}$. From the main theorem we get immediately
a fact which is contained in Berndtsson's theorem \cite{berndtsson}:

{\em The locally free sheaf $f_*\cK_{\cX/S}^{\otimes(m+1)}$ is
Nakano-positive.}

(Very recently Berndtsson considered the curvature of $f_* (\cK_{\cX/S}
\otimes \cL)$, \cite{berndtsson-pre}.)

We prove the estimate
$$
R(A,\ol A, \psi,\ol\psi) \geq P_n(d(\cX_s)) \cdot \|A\|^2\cdot
\|\psi\|^2,
$$
where the coefficient $P_n(d(\cX_s))>0$ is an explicit function depending
on the dimension and the diameter of the fibers.

By Serre duality the theorem yields the following version, which contains
for $p=1$ the curvature formula for the generalized \wp metric. Again a
tangent vector of the base is identified with a harmonic \ks form $A_i$,
and $\nu_k$ stands for a section of the relevant sheaf:
\begin{maintheorem}
The curvature for $R^pf_*\Lambda^p\cT_{\cX/S}$ equals
\begin{eqnarray*}
R_{i\ol\jmath   k \ol\ell }(s)&=&- \int_{\cX_s}
\left( \Box + 1 \right)^{-1}(A_i\cdot A_\ol\jmath)
\cdot(\nu_k \cdot \nu_\ol\ell) g\/ dV\nonumber\\
&& \quad - \int_{\cX_s} \left( \Box + 1 \right)^{-1} (A_i\wedge\nu_\ol\ell)
\cdot (A_\ol\jmath \wedge \nu_k) g\/ dV \\
&& \quad -  \int_{\cX_s} \left( \Box - 1 \right)^{-1}
(A_i\wedge \nu_k)\cdot (A_\ol\jmath \wedge \nu_\ol\ell) g\/ dV.
\nonumber
\end{eqnarray*}
\end{maintheorem}

In order to prove a  result about hyperbolicity of moduli spaces we use
Demailly's approach. An upper semi-continuous Finsler metric of negative
holomorphic curvature on a relatively compact subspace of the moduli stack
of canonically polarized varieties can be constructed so that any such
space is hyperbolic with respect to the orbifold structure.

We get immediately the following fact related to Shafarevich's
hyperbolicity conjecture for higher dimensions solved by Migliorini
\cite{m}, Kovács \cite{kv1,kv2,kv3}, Bedulev-Viehweg \cite{b-v}, and
Viehweg-Zuo \cite{v-z,v-z2}.

{\bf Application.} {\it Let $\cX \to C$ be a non-isotrivial holomorphic
family of canonically polarized manifolds over a curve. Then $g(C)>1$.}

{\it Acknowledgements.} This work was begun during a visit to Harvard
University. The author would like to thank Professor Yum-Tong Siu for his
cordial hospitality and many discussions. His thanks also go to Bo
Berndtsson, Jeff Cheeger, Jean-Pierre Demailly, Gordon Heier, Stefan
Kebekus, Janos Kollár, Sándor Kovács, and Thomas Peternell for discussions
and remarks.
\bigskip

\bigskip

\section{Fiber integration and Lie derivatives}

\subsection{Definition of fiber integrals and basic properties}
Let denote $\{\cX_s\}_{s\in S}$ a holomorphic family of compact complex
manifolds $\cX_s$ of dimension $n>0$ parameterized by a reduced complex
space $S$. By definition, it is given by a proper holomorphic submersion
$f:\cX \to S$, such that the $\cX_s = f^{-1}(s)$ for $s\in S$. In case of
a smooth space $S$, if $\eta$ is a differential form of class $\cinf$ of
degree $2n+r$ the fiber integral
$$
\int_{\cX/S} \eta
$$
is a differential form of degree $r$ on $S$. It can be defined as follows:
Fix a point $s_0\in S$ and denote by $X=\cX_{s_0}$ the fiber. Let $U
\subset S$ be an open neighborhood of $s_0$  such that there exists a
$\cinf$ trivialization of the family:
$$
\xymatrix{f^{-1}U  \ar[d]_{f|f^{-1}U } & X \ar[l]_{\Phi}^\sim\ar[dl]^{pr}\times U\\ U }
$$
Let $z=(z^1,\ldots,z^n)$ and $s=(s^1,\ldots,s^k)$ denote local
(holomorphic) coordinates on $X$ and $S$ resp.

The pull-back $\Phi^*\eta$ possesses a summand $\eta'$, which is of the
form $\sum \eta_k(z,s) dV_z \wedge d\sigma^{k_1}\wedge\ldots\wedge
d\sigma^{k_r}$, where the $\sigma^\kappa$ run through the real and complex
parts of $s^j$, and where $dV_z$ denotes the relative Euclidean volume
element. Now
$$
\int_{\cX/S} \eta := \int_{X\times S/S} \Phi^*\eta := \sum_{k=(k_1,\ldots,k_r)} \left(\int_{\cX_s}
\eta_k(z,s) dV_z \right) d\sigma^{k_1}\wedge\ldots\wedge d\sigma^{k_r}.
$$
The definition is independent of the choice of coordinates and
differentiable trivializations. The fiber integral coincides with the
push-forward of the corresponding current. Hence, if  $\eta$ is a
differentiable form of type $(n+r,n+s)$, then the fiber integral is of
type $(r,s)$.

Singular base spaces are treated as follows: Using deformation theory, we
can assume that $S\subset W$ is a closed subspace of some open set $W
\subset \mathbb C^N$, and that an almost complex structure is defined on
$X\times S$ so that $\cX$ is the integrable locus. Then by definition, a
differential form of class $\cinf$ on $\cX$ will be given on the whole
ambient space $X \times W$ (with a type decomposition defined on $\cX$).

Fiber integration commutes with taking exterior derivatives:
\begin{equation}\label{eq:fibintallg}
d \int_{\cX/S} \eta = \int_{\cX/S} d\eta,
\end{equation}
and since it preserves the type (or to be seen explicitly in local
holomorphic coordinates), the same equation holds true for $\pt$ and
$\ol\pt$ instead of $d$.

A \ka form $\omega_\cX$ on a singular space, by definition is a form that
possesses locally a $\pt\ol\pt$-potential, which is the restriction of a
$\cinf$ function on a smooth ambient space. It follows from the above
facts that given a \ka form  $\omega_\cX$  on the total space, the fiber
integral
\begin{equation}
\int_{\cX/S} \omega_\cX^{n+1}
\end{equation}
is a \ka form on the base space $S$, which possesses locally a smooth
$\pt\ol\pt$-potential, even if the base space of the smooth family is
singular.

For the actual computation of exterior derivatives of fiber integrals
\eqref{eq:fibintallg}, in particular of functions, given by integrals of
$(n,n)$-forms on the fibers, the above definition seems to be less
suitable. Instead the problem is reduced to derivatives of the form
\begin{equation}\label{eq:derfibint}
\frac{\pt}{\pt s^i} \int_{\cX_s} \eta,
\end{equation}
where only the vertical components of $\eta$ contribute to the integral.
Here and later we will always use the summation convention.
\begin{lemma}\label{le:intLie}
Let
$$
w_i=\left.\left(\frac{\pt}{\pt s^i} + b^\alpha_i(z,s)\frac{\pt}{\pt z^\alpha}
+ c^\ol\beta(z,s)\frac{\pt}{\pt z^\ol\beta} \right)\right|_s
$$
be differentiable vector fields, whose projection to $S$ equal
$\frac{\pt}{\pt s^i}$. Then
$$
\frac{\pt}{\pt s^i} \int_{\cX_s} \eta =
\int_{\cX_s} L_{w_i}\left(\eta\right),
$$
where $L_{w_i}$ denotes the Lie derivative.
\end{lemma}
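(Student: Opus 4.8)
The plan is to reduce the statement to the standard formula for differentiating an integral with respect to a parameter by choosing, locally, a fixed $\cinf$ trivialization of the family and pulling everything back to the product $X\times U$. Fix $s_0\in S$ and a trivialization $\Phi\colon X\times U\to f^{-1}U$ as in the setup above. Under $\Phi$ the family becomes the constant family $X\times U\to U$, so the fiber $\cX_s$ corresponds to $X\times\{s\}$, which does not move with $s$. Hence one may differentiate under the integral sign: writing $\Phi^*\eta$ and keeping only its relative-volume summand $\eta' = \sum_k \eta_k(z,s)\,dV_z\wedge d\sigma^{k_1}\wedge\cdots$, the quantity $\int_{\cX_s}\eta$ is (the relevant component of) $\int_X \Phi^*\eta$, and $\frac{\pt}{\pt s^i}\int_X \Phi^*\eta = \int_X \frac{\pt}{\pt s^i}\Phi^*\eta$ because the domain $X$ is fixed. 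This is the honest content of the lemma; everything else is bookkeeping to re-express $\frac{\pt}{\pt s^i}\Phi^*\eta$ intrinsically on $\cX$.

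Next I would identify $\frac{\pt}{\pt s^i}\Phi^*\eta$ with a Lie derivative. The coordinate vector field $\pt/\pt s^i$ on $X\times U$ is $\Phi$-related to some differentiable vector field $\wt w_i$ on $f^{-1}U$ whose projection to $S$ is $\pt/\pt s^i$; in the local holomorphic coordinates $(z,s)$ on $\cX$ this $\wt w_i$ necessarily has the form $\pt/\pt s^i + b^\alpha_i\pt/\pt z^\alpha + c^{\ol\beta}_i\pt/\pt z^{\ol\beta}$ appearing in the statement, the coefficients $b,c$ being determined by the derivative of $\Phi$. Because Lie derivative commutes with pullback along a diffeomorphism, $\Phi^*(L_{\wt w_i}\eta) = L_{\pt/\pt s^i}(\Phi^*\eta)$, and since $\pt/\pt s^i$ is a coordinate field, $L_{\pt/\pt s^i} = \frac{\pt}{\pt s^i}$ acting on the coefficient functions of a form written in the product coordinates. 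Thus
$$
\frac{\pt}{\pt s^i}\int_{\cX_s}\eta = \int_X \frac{\pt}{\pt s^i}\Phi^*\eta = \int_X \Phi^*\big(L_{\wt w_i}\eta\big) = \int_{\cX_s} L_{\wt w_i}(\eta).
$$
Finally one checks that the answer is independent of which lift $w_i$ is used: two admissible lifts differ by a vertical (relative) vector field $v$, and $L_v\eta$ integrated over the fiber vanishes. For this I would invoke Cartan's formula $L_v = d\iota_v + \iota_v d$: the fiber-integral of $d(\iota_v\eta)$ is zero because fiber integration commutes with $d$ (equation~\eqref{eq:fibintallg}) and $\int_{\cX_s}\iota_v\eta$ is a form of the wrong dimension, i.e.\ its restriction to the fiber has degree $<2n$ in the fiber directions, hence integrates to zero; and $\iota_v d\eta$ restricted to a fiber, for $v$ tangent to the fiber, contributes nothing to the top relative degree either. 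So only the vertical part of $\eta$ and the class of the lift modulo vertical fields matter, which is exactly the remark preceding the lemma.

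The one genuinely delicate point is the case of a singular base, since then "$\cX_s$ does not move under a trivialization" is not literally available. Following the convention already fixed in the text, one works with $S\subset W\subset\C^N$ and an almost complex structure on $X\times W$; a $\cinf$ form on $\cX$ is by definition the restriction of a $\cinf$ form on $X\times W$. One runs the entire argument on the smooth ambient product $X\times W$ — where the trivialization is the identity and the computation above is unobstructed — and then restricts to $S$ at the end, using that fiber integration over $\cX_s$ only sees the restriction of $\eta$ to the fiber and that the vector fields $w_i$ extend to the ambient space. The main obstacle is therefore purely one of verifying that this restriction procedure is compatible with $\pt/\pt s^i$ and with the Lie derivative, i.e.\ that differentiating along $S$-directions commutes with restricting from $W$ to $S$; this is immediate because $S$-directions are tangent to $S$. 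All the rest is the classical differentiation-under-the-integral-sign computation on a fixed manifold $X$, which I would not write out in detail.
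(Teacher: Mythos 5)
Your argument is correct, and it takes a slightly different route from the paper's. The paper's own proof is more direct: by linearity it splits $\pt/\pt s^i$ and $w_i$ into real and imaginary parts and, for the real part, uses the one-parameter family of diffeomorphisms $\Phi_t\colon X\to\cX_t$ generated by $\mathrm{Re}(w_i)$ itself, so that $\frac{d}{dt}\int_{\cX_t}\eta=\int_X\frac{d}{dt}\Phi_t^*\eta=\int_X L_{\mathrm{Re}(w_i)}(\eta)$; since the flow is generated by the given lift, the identity holds for an arbitrary lift at once and no further step is needed (with the caveat, noted there, that $\mathrm{Re}(w_i)$ and $\mathrm{Im}(w_i)$ need not commute). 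You instead fix one particular lift --- the push-forward of $\pt/\pt s^i$ under a $\cinf$ trivialization --- prove the identity for it by differentiating under the integral sign on the fixed manifold $X$, and then reduce the general lift to this one by showing $\int_{\cX_s}L_v\eta=0$ for vertical $v$. That extra step is sound: for $v$ tangent to $\cX_s$ one has $\iota_s^*\iota_v=\iota_{v|\cX_s}\iota_s^*$, so the term $d\iota_v\eta$ restricts to an exact form on the compact fiber (Stokes, or degree reasons via \eqref{eq:fibintallg}), while $\iota_v d\eta$ restricts to zero because $\iota_s^*d\eta=d\iota_s^*\eta$ vanishes for degree reasons. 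What your route buys is an explicit verification that the right-hand side is independent of the choice of lift modulo vertical fields --- exactly the content of the remark preceding the lemma that only vertical components matter --- at the cost of being a bit longer than the paper's flow argument; your handling of the complex field $\pt/\pt s^i$ by linear extension of $L$ and of the singular base by working on the ambient $X\times W$ is consistent with the paper's conventions.
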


Concerning singular base spaces, observe that it is sufficient that the
above equation is given on the first infinitesimal neighborhood of $s$ in
$S$.

\begin{proof}
Because of linearity, one may consider the real and imaginary parts of
$\pt/\pt s^i$ and $w_i$ resp.\ separately.

Let $\pt/\pt t$ stand for $Re(\pt/\pt s^i)$, and let $\Phi_t: X \to \cX_t$
be the one parameter family of diffeomorphisms generated by $Re(w_i)$.
Then
$$
\frac{d}{d t} \int_{\cX_s} \eta = \int_X \frac{d}{dt} \Phi^*_t \eta = \int_X L_{Re(w^i)}(\eta).
$$
It is known that the vector fields $Re(w_i)$ and $Im(w_i)$ need not
commute.
\end{proof}
In our applications, the form $\eta$ will typically consist of inner
products of differential forms with values in hermitian vector bundles,
whose factors need to be treated separately. This will be achieved by the
Lie derivatives. In this context, we will have to use covariant
derivatives with respect to the given hermitian vector bundle on the total
space and to the \ka metrics on the fibers.

\subsection{Direct images and differential forms}\label{ss:dolb}
Let $(\cE, h)$ be a hermitian, holomorphic vector bundle on $\cX$, whose
direct image $R^q f_*\cE$ is {\em locally free}. Furthermore we assume
that for all $s\in S$ the cohomology $H^{q+1}(\cX_s, \cE \otimes
\cO_{\cX_s})$ {\em vanishes}. Then the statement of the
Grothendieck-Grauert comparison theorem holds for $R^q f_*\cE$, in
particular $R^q f_*\cE\otimes_{\cO_S} \C(s)$ can be identified with
$H^{q}(\cX_s, \cE \otimes_{\cO_\cX}\cO_{\cX_s})$.

For simplicity, we assume that he base space $S$ is smooth. Locally, after
replacing $S$ by a neighborhood of any given point, we can represent
sections of the $q$-th direct image sheaf in terms of Dolbeault cohomology
by $\ol\pt$-closed $(0,q)$-forms. On the other hand, fiberwise, we have
harmonic representatives of cohomology classes with respect to the \ka
form and hermitian metric on the fibers. The following fact will be
essential.

\begin{lemma}\label{le:dolb}
Let $\wt\psi \in R^q f_*\cE(S)$ be a section. Let $\psi_s \in
\cA^{0,q}(\cX_s,\cE_s)$ be the harmonic representatives of the cohomology
classes $\wt\psi|\cX_s$.

Then locally with respect to $S$ there exists a $\ol\pt$-closed form
$\psi\in \cA^{0,q}(\cX,\cE)$, which represents $\wt \psi$, and whose
restrictions to the fibers $\cX_s$ equal $\psi_s$.
\end{lemma}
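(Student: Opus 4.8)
The plan is to start from a \v Cech-type or Dolbeault representative $\psi$ of $\wt\psi$ that exists locally on $S$ simply because $R^qf_*\cE$ is computed by the relative Dolbeault complex, and then to \emph{correct} it fiberwise so that its restriction to each $\cX_s$ becomes the harmonic representative $\psi_s$, without destroying $\ol\pt$-closedness on the total space. Concretely: after shrinking $S$, pick any $\ol\pt$-closed $\psi' \in \cA^{0,q}(\cX,\cE)$ representing $\wt\psi$. Its restriction $\psi'|\cX_s = \psi'_s$ is $\ol\pt_s$-closed and cohomologous to $\psi_s$, so $\psi'_s - \psi_s = \ol\pt_s \chi_s$ for a unique $\chi_s \in (\ker \ol\pt_s)^\perp$, namely $\chi_s = \ol\pt_s^* G_s(\psi'_s)$ where $G_s$ is the fiberwise Green's operator. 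The key point is that $s \mapsto \chi_s$ is differentiable: this follows because the family of Laplacians $\Box_s$ (acting on $(0,q)$-forms with values in $\cE_s$) varies differentiably and has constant-dimensional kernel (by the vanishing of $H^{q+1}(\cX_s,\cE_s)$ together with local freeness of $R^qf_*\cE$, which pins down $h^q(\cX_s,\cE_s)$), so the Green's operators $G_s$ and the projections onto harmonic space depend differentiably on $s$ in the appropriate Sobolev operator norms. Hence the $\chi_s$ patch to a global $\chi \in \cA^{0,q-1}(\cX,\cE)$ (a priori only $\cinf$ in the base directions; one may need to add extra $ds^i$-components but these will be killed upon restriction to fibers anyway).

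Next I would set $\psi := \psi' - \ol\pt \chi$. By construction $\psi$ is $\ol\pt$-closed on $\cX$ and represents the same class $\wt\psi$ as $\psi'$. The subtle issue is its restriction to a fiber. Write $\ol\pt = \ol\pt_s + \ol\pt_{\mathrm{hor}}$, where $\ol\pt_{\mathrm{hor}}$ collects the $dz^{\ol\beta}$-free base-direction pieces. Then $(\ol\pt\chi)|\cX_s = \ol\pt_s(\chi|\cX_s) = \ol\pt_s \chi_s$, because the purely horizontal part $\ol\pt_{\mathrm{hor}}\chi$ carries at least one $d\ol s$ factor and thus restricts to zero on $\cX_s$. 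Therefore $\psi|\cX_s = \psi'_s - \ol\pt_s\chi_s = \psi_s$, which is exactly the harmonic representative. This is the whole argument; the construction is moreover canonical fiberwise, so it is automatically independent of the auxiliary choices up to forms vanishing on fibers.

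The main obstacle — and the step deserving the most care — is the differentiable dependence of the fiberwise harmonic projection and Green's operator on the base parameter $s$, together with the fact that restriction of $\ol\pt\chi$ to fibers only sees $\ol\pt_s\chi_s$. For the former, the standard route is: trivialize the family $\cinf$-ly over a neighborhood $U$ of the base point as in the earlier part of the paper, so that all the $\cE_s$, the metrics, and the operators $\Box_s$ live on a fixed manifold $X$ and depend differentiably on $s \in U$; invoke the constancy of $\dim \ker \Box_s = h^q(\cX_s,\cE_s)$ (guaranteed by our cohomology hypotheses and Grothendieck--Grauert) so there is no jumping of eigenvalues across $0$; then apply elliptic theory with parameters (e.g.\ via a contour integral $\tfrac{1}{2\pi i}\oint (\lambda - \Box_s)^{-1}\,d\lambda$ for the harmonic projector, which is differentiable in $s$ in operator norm on suitable Sobolev spaces, hence $G_s = (\Box_s|_{(\ker)^\perp})^{-1}$ is too) and elliptic regularity to land back in $\cinf$. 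One should also remark, as the paper does for Lemma~\ref{le:intLie}, that since all that is needed downstream is the behavior to first order in $s$, it suffices to establish the assertion on the first infinitesimal neighborhood of any given point, which slightly lightens the analytic burden.
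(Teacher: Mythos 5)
Your argument is correct and is precisely the standard proof the paper has in mind when it says ``we omit the simple proof'': shrink $S$, take any $\ol\pt$-closed Dolbeault representative $\psi'$ of $\wt\psi$ on the total space, and correct it by $\ol\pt\chi$ with $\chi|_{\cX_s}=\ol\pt_s^*G_s(\psi'|_{\cX_s})$, using the Kodaira--Spencer theorem on differentiable dependence of the Green's operators and harmonic projectors (valid here since $h^q(\cX_s,\cE_s)$ is constant by the local freeness and vanishing hypotheses), together with the fact that restriction to fibers commutes with $\ol\pt$ and kills the $d\ol s$-components. I see no gap; your remarks on the choice of the base-direction components of $\chi$ and on the origin of the initial representative $\psi'$ over a Stein neighborhood are exactly the points that need saying.
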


We omit the simple proof.


In this way, the relative Serre duality can be treated in terms of such
differential forms. Let $\cE^\vee = \textit{Hom}_\cX(\cE,\cO_\cX)$. Then
$$
R^pf_*\cE \otimes_{\cO_S} R^{n-p}f_*(\cE^\vee \otimes_{\cO_\cX} \cK_{\cX/S}) \to \cO_S
$$
is given by the fiber integral of the wedge product of $\ol\pt$-closed
differential forms in the sense of Lemma~\ref{le:dolb}. By
\eqref{eq:fibintallg} (for the operator $\ol\pt$), the result is a
$\ol\pt$-closed $0$-form i.e.\ holomorphic function.

\section{Estimates for resolvent and heat kernel}
Let $(X,\omega_X)$ be a compact \ka manifold.
The Laplace operator for differentiable functions is given by $\Box =
\ol\pt\ol\pt^*+\ol\pt^*\ol\pt$, where the adjoint $\ol\pt^*$ is the formal
adjoint operator. The Laplacian is self-adjoint with non-negative
eigenvalues.

The corresponding resolvent operator $(id + \Box)^{-1}$ is defined on the
space of continuous functions and bounded.

First, we observe that the resolvent operator is positive: If $\chi \geq
0$ everywhere on $X$, then the function given by $(\Box + id)^{-1}\chi$ is
non-negative. This fact follows immediately from the minimum principle
applied to the elliptic equation
$$
\Box \phi + \phi =  \chi.
$$
So the integral kernel $P(z,w)$ must be non-negative for all $z$ and $w$.

For any function $\chi(z)$
$$
(\Box + id)^{-1}(\chi)(z)= \int_X P(z,w)\chi(w) g(w)dV_w
$$
holds. In a similar way we denote by $P(t,z,w)$ the integral kernel for
the heat operator
$$
\frac{d}{dt} + \Box
$$
so that the solution of the heat equation with initial function $\chi(z)$
for $t=0$ is given by
$$
\int_X P(t,z,w)\chi(w) g(w)dV_w.
$$
The explicit representation of the above operators in terms of eigen
functions of the Laplacian yields the following relation.

\begin{lemma}
\label{le:heat} Let $P(z,w)$ be the integral kernel of the resolvent
operator and denote by $P(t,z,w)$ the heat kernel. Then
\begin{equation}\label{eq:resheatest}
P(z,w)= \int_0^\infty e^{-t}P(t,z,w)dt.
\end{equation}
\end{lemma}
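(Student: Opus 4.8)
The plan is to prove the identity $P(z,w) = \int_0^\infty e^{-t} P(t,z,w)\,dt$ by expanding both kernels in terms of an orthonormal basis of eigenfunctions of the Laplacian. Let $\{\phi_j\}_{j\geq 0}$ be an $L^2(X, g\,dV)$-orthonormal basis consisting of eigenfunctions of $\Box$, with eigenvalues $0 = \lambda_0 < \lambda_1 \leq \lambda_2 \leq \cdots$, and recall that on a compact \ka manifold the spectrum is discrete and the eigenfunctions are smooth. First I would write down the spectral representation of the heat kernel, namely
$$
P(t,z,w) = \sum_{j\geq 0} e^{-\lambda_j t}\, \phi_j(z)\, \overline{\phi_j(w)},
$$
which is the standard expansion of the fundamental solution of $\tfrac{d}{dt} + \Box$, and the corresponding expansion of the resolvent kernel,
$$
P(z,w) = \sum_{j\geq 0} \frac{1}{1+\lambda_j}\, \phi_j(z)\, \overline{\phi_j(w)},
$$
which is obtained by noting that $(\Box + id)^{-1}\phi_j = (1+\lambda_j)^{-1}\phi_j$ and that the resolvent kernel is the integral kernel of this operator.

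Next I would compute the right-hand side of \eqref{eq:resheatest} by substituting the heat-kernel expansion and integrating term by term:
$$
\int_0^\infty e^{-t} P(t,z,w)\,dt = \int_0^\infty e^{-t} \sum_{j\geq 0} e^{-\lambda_j t}\, \phi_j(z)\, \overline{\phi_j(w)}\,dt = \sum_{j\geq 0} \left( \int_0^\infty e^{-(1+\lambda_j)t}\,dt \right) \phi_j(z)\, \overline{\phi_j(w)},
$$
and since $\int_0^\infty e^{-(1+\lambda_j)t}\,dt = (1+\lambda_j)^{-1}$ (using $\lambda_j \geq 0$), the right-hand side equals the spectral expansion of $P(z,w)$ displayed above. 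This gives the claimed equality.

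The main technical point — the step I expect to require the most care — is justifying the interchange of the sum over $j$ with the integral over $t$, and more fundamentally the convergence of these spectral series as honest functions (or kernels) rather than merely in an $L^2$ or distributional sense. For the heat kernel this is classical: the factor $e^{-\lambda_j t}$ together with Weyl's asymptotics $\lambda_j \sim c\, j^{1/n}$ and the sup-norm bound $\|\phi_j\|_\infty \lesssim \lambda_j^{n/2}$ (or simply elliptic estimates bootstrapping the $L^2$ bound) makes the series for $P(t,z,w)$ converge absolutely and uniformly on $X \times X$ for each $t > 0$, with enough decay in $t$ near $0$ after multiplying by $e^{-t}$ to allow Fubini/Tonelli. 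For the resolvent kernel the series $\sum_j (1+\lambda_j)^{-1}\phi_j(z)\overline{\phi_j(w)}$ converges in $L^2$ and, since $(\Box+id)^{-1}$ is smoothing (its kernel is smooth off the diagonal and, in complex dimension $n \geq 2$, has an integrable singularity on the diagonal), defines the kernel $P(z,w)$ used above; one then identifies the $L^2$-limit with the pointwise sum away from the diagonal. Once absolute convergence and the Fubini interchange are in place, the computation is immediate.
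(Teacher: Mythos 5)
Your proof is correct and follows essentially the same route as the paper: both expand the resolvent kernel and the heat kernel in eigenfunctions of the Laplacian and use $\int_0^\infty e^{-(1+\lambda)t}\,dt = (1+\lambda)^{-1}$ termwise. The extra care you devote to convergence and the Fubini interchange is sound but not part of the paper's (very brief) argument.
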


\begin{proof}
Let $\{\chi_\nu\}$ be a set of eigenfunctions of the Laplacian with
eigenvalues $\lambda_\nu$ so that
$$
P(z,w)= \sum_\nu \frac{1}{1+\lambda_\nu} \chi_\nu(z)\chi_\nu(w)
$$
and
$$
P(t,z,w)= \sum_\nu e^{-t\lambda_\nu} \chi_\nu(z)\chi_\nu(w).
$$
Then, since the eigenvalues are non-negative,
$$
\int_0^\infty e^{-t(\lambda + 1)} dt = \frac{1}{1+\lambda}
$$
implies \eqref{eq:resheatest}, (cf.\ also \cite[(3.13)]{c-y}).
\end{proof}

We now apply the lower estimates for the heat kernel by Cheeger and Yau
\cite{c-y} to the resolvent kernel. Assuming constant negative Ricci
curvature $-1$, we use the estimates from \cite[(4.3) Corollary]{st}.
\begin{equation}\label{eq:hker}
P(t,z,w)\geq Q_{n}(t,r(z,w)):=\frac{1}{(2\pi t)^n} e^{- \frac{r^2(z,w)}{t}} e^{-\frac{2n-1}{4}t},
\end{equation}
Where $r=r(z,w)$ denotes the geodesic distance (and $n=\dim X$).

Let
\begin{equation}\label{eq:hker1}
P_{n}(r)= \int_0^\infty e^{-t} Q_n(t,r) dt>0.
\end{equation}

Using Lemma~\ref{le:heat} and \eqref{eq:hker} we get
\begin{equation}\label{eq:hker2}
P(z,w) \geq P_n(r(z,w)) \geq P_n(d(X)),
\end{equation}
where $d(X)$ denotes the diameter of $X$. However, $\lim_{r\to
\infty}P_n(r)=0$.

\begin{proposition}\label{pr:resol}
Let $(X,\omega_X)$ be a \ke manifold of constant Ricci curvature $-1$ with
volume element $g\/ dV$ and diameter $d(X)$. Let $\chi$ be a non-negative
continuous function. Let
\begin{equation}\label{eq:ellip}
(1+\Box)\phi = \chi.
\end{equation}
Then
\begin{equation}
  \phi(z)\geq P_n(d(X))\cdot \int_X \chi g\/ dV
\end{equation}
for all $z\in X$.
\end{proposition}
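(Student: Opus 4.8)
The plan is to combine the positivity of the resolvent kernel with the pointwise lower bound \eqref{eq:hker2}. The solution of \eqref{eq:ellip} is represented by the integral kernel as
\[
\phi(z) = \int_X P(z,w)\,\chi(w)\, g(w)\, dV_w.
\]
Since $\chi \geq 0$ everywhere by hypothesis, each integrand $P(z,w)\chi(w)g(w)$ is non-negative, so we are free to replace $P(z,w)$ by any lower bound without reversing the inequality. By \eqref{eq:hker2} we have $P(z,w) \geq P_n(r(z,w)) \geq P_n(d(X))$ for all $z,w \in X$, the last step because $r(z,w) \leq d(X)$ and $P_n$ is monotone decreasing in $r$ (visible from \eqref{eq:hker1} and \eqref{eq:hker}, where $Q_n(t,r)$ is decreasing in $r$). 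Substituting the constant lower bound $P_n(d(X))$ pulls it out of the integral and yields
\[
\phi(z) \geq P_n(d(X)) \int_X \chi\, g\, dV
\]
for every $z \in X$, which is the assertion.

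The only points requiring care are the standing hypotheses under which the kernel estimate \eqref{eq:hker} applies: $X$ must be a compact \ke manifold of constant Ricci curvature $-1$, which is exactly what is assumed in the proposition, and the Cheeger--Yau comparison (in the form cited from \cite{st}) uses this Ricci lower bound to produce $Q_n(t,r)$. The passage from heat kernel to resolvent kernel is Lemma~\ref{le:heat}, and the positivity of $P(z,w)$ itself was established just before Lemma~\ref{le:heat} via the minimum principle for $\Box\phi + \phi = \chi$. Finally one should note $P_n(d(X)) > 0$, which is immediate from \eqref{eq:hker1} since the integrand there is strictly positive.

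I do not anticipate a serious obstacle here: the proposition is essentially a repackaging of the kernel estimates of the previous paragraphs into a statement about solutions of the elliptic equation. The only mildly delicate step is justifying the monotonicity of $P_n$ in $r$ and hence the bound $P_n(r(z,w)) \geq P_n(d(X))$; this follows term-by-term under the integral sign in \eqref{eq:hker1} from the fact that $r \mapsto e^{-r^2/t}$ is decreasing, so no real work is needed. Regularity of $\phi$ (so that the pointwise inequality makes sense at every $z$) is automatic from elliptic regularity applied to \eqref{eq:ellip} with continuous right-hand side.
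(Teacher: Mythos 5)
Your proposal is correct and follows the same route the paper intends: the proposition is exactly the kernel representation $\phi(z)=\int_X P(z,w)\chi(w)g(w)\,dV_w$ combined with the bound $P(z,w)\geq P_n(r(z,w))\geq P_n(d(X))$ of \eqref{eq:hker2}, with $\chi\geq 0$ allowing the constant lower bound to be pulled out of the integral. Your added remarks on the monotonicity of $P_n$ in $r$ and the positivity of the kernel only make explicit what the paper uses implicitly, so there is nothing to correct.
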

Conversely let for all solutions of \eqref{eq:ellip} an estimate
$\phi(z)\geq P \cdot \int_X \chi g\/ dV$ hold. for some number $P$. Then
$P\leq\inf P(z,w)$ follows immediately.

\medskip

\begin{tiny}
We mention that symbolic integration of \eqref{eq:hker} with
\eqref{eq:hker1} yields an explicit estimate.
$$
P_{n}(r) \geq \frac{1}{(2\pi)^n} \frac{(2n+3)^{\frac{n-1}{2}}}{2^{n-2}} \frac{1}{r^{n-1}} \text{BesselK}\left( n-1, \sqrt{2n+3} r   \right)
$$
\end{tiny}

\section{Positivity of $K_{\cX/S}$}\label{se:posi}
Let $X$ be a canonically polarized manifold of dimension $n$, equipped
with a \ke metric $\omega_X$. In terms of local holomorphic coordinates
$(z^1,\ldots, z^n)$ we write
$$
\omega_X=\ii g_{\alpha\ol\beta}(z)\; dz^\alpha\wedge dz^\ol\beta
$$
so that the \ke equation reads
\begin{equation}\label{eq:ke}
\omega_X=-{\rm Ric}(\omega_X),  \text{ i.e. }  \omega_X= \ddb \log g(z),
\end{equation}
where $g:=\det g_{\alpha\ol\beta}$. We consider $g$ as a hermitian metric
on the anti-canonical bundle $K_X^{-1}$.

For any holomorphic family of compact, canonically polarized manifolds $f:
\cX \to S$ of dimension $n$ with fibers $\cX_s$ for $s\in S$ the \ke forms
$\omega_{\cX_s}$ depend differentiably on the parameter $s$. The resulting
relative \ka form will be denoted by
$$
\omega_{\cX/S} = \ii g_{\alpha,\ol\beta}(z,s)\;dz^\alpha\wedge dz^\ol\beta.
$$
The corresponding hermitian metric on the relative anti-canonical bundle
is given by $g=\det \gab(z,s)$.  We consider the real $(1,1)$-form
$$
\omega_\cX= \ddb \log g(z,s)
$$
on the total space $\cX$. We will discuss the question, whether
$\omega_\cX$ is a \ka form on the total space.

The \ke equation \eqref{eq:ke} implies that
$$
\omega_\cX|\cX_s = \omega_{\cX_s}
$$
for all $s\in S$. In particular $\omega_\cX$, restricted to any fiber, is
positive definite. Our  result is the following statement (cf.\ Main
Theorem).

\begin{theorem}\label{th:main}
Let $\cX \to S$ be a holomorphic family of canonically polarized, compact,
complex manifolds. Then the hermitian metric on $\cK_{\cX/S}$ induced by
the \ke metrics on the fibers is semi-positive and strictly positive on
all fibers. It is strictly positive in horizontal directions, for which
the family is not infinitesimally trivial.
\end{theorem}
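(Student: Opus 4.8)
The plan is to show directly that the $(1,1)$-form $\omega_\cX=\ddb\log g(z,s)$ --- which is the Chern curvature of the induced metric on $\cK_{\cX/S}$, since $g=\det\gab$ is the metric on the relative \emph{anti}-canonical bundle --- is semi-positive on $\cX$, positive definite on each fibre, and positive in a horizontal direction $\xi\in T_sS$ whenever the \ks class $\rho(\xi)$ is nonzero. The statement for a single $\xi$ is detected on the subfamily $\cX|_\Delta\to\Delta$ over a disc $\Delta\subset S$ tangent to $\xi$: the fibre \ke metric depends only on the fibre, so $g(z,s)$ restricts to the analogous function for $\cX|_\Delta$ and $\ddb$ commutes with restriction, giving $\omega_{\cX|_\Delta}=\omega_\cX|_{\cX|_\Delta}$. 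Hence I may assume $\dim S=1$ with coordinate $s$. By \eqref{eq:ke} one has $\omega_\cX|\cX_s=\omega_{\cX_s}>0$, which settles semi-positivity and strict positivity along fibres; the content is the horizontal part.

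First I would write the coefficient matrix of $\omega_\cX$ in coordinates $(z^\alpha,s)$ in block form. Its $(\alpha\ol\beta)$-block is $\pt_\alpha\pt_{\ol\beta}\log g=\gab$, the positive-definite fibre metric, so $\omega_\cX$ is $\ge 0$ (resp.\ $>0$) at a point exactly when the Schur complement
\[
\varphi \;:=\; \pt_s\pt_{\ol s}\log g \;-\; (\pt_s\pt_{\ol\beta}\log g)\, g^{\ol\beta\alpha}\, (\pt_\alpha\pt_{\ol s}\log g)
\]
is $\ge 0$ (resp.\ $>0$) there; equivalently $\varphi=\omega_\cX(v,\ol v)$ for the horizontal lift $v=\pt_s+a^\alpha\pt_\alpha$ of $\pt_s$ relative to $\omega_\cX$, where $a^\alpha=-g^{\ol\beta\alpha}\pt_{\ol\beta}(\pt_s\log g)$. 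Indeed, if the $(\alpha\ol\beta)$-block is positive definite then $\omega_\cX(w,\ol w)=|c|^2\big(\varphi+\|b-a\|^2\big)$ for $w=c(\pt_s+b^\alpha\pt_\alpha)$, which makes the reduction transparent: the theorem amounts to proving $\varphi\ge 0$ on $\cX$, with strict inequality when the deformation is not infinitesimally trivial.

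Next I would identify the relevant \ks form. For any differentiable lift the tensor $A:=\db(a^\alpha\pt_\alpha)$, i.e.\ $A^\alpha_{\ol\beta}=\pt_{\ol\beta}a^\alpha$, restricts on $\cX_s$ to a $\db$-closed representative of $\rho(\pt_s)$. The point is that for the canonical choice $a^\alpha=-g^{\ol\beta\alpha}\pt_{\ol\beta}(\pt_s\log g)$, built from the \ke potential $\log g$, the form $A$ is in addition co-closed: lowering an index and using $\pt_\alpha\pt_{\ol\beta}\log g=\gab$ together with the Ricci identities one verifies $\dbs A=0$, so $A$ is the harmonic \ks form $A_s=A^\alpha_{s\ol\beta}\pt_\alpha dz^{\ol\beta}$ (this is Siu's canonical lifting, cf.\ \cite{siu:canlift}, \cite{sch-preprint}). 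Then I would carry out the central computation: differentiating $\varphi$ and commuting covariant derivatives, the curvature contributions being governed by ${\rm Ric}(\omega_{\cX_s})=-\omega_{\cX_s}$, one obtains the elliptic equation on the fibre
\[
(\Box+1)\,\varphi \;=\; |A_s|^2 ,
\qquad
|A_s|^2 := A^\alpha_{s\ol\beta}\,\overline{A^\gamma_{s\ol\delta}}\,g_{\alpha\ol\delta}\,g^{\gamma\ol\beta},
\]
with $\Box$ the complex Laplacian on functions --- the higher-dimensional analogue of Wolpert's equation \cite{wo}.

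Finally, $|A_s|^2$ is continuous and $\ge 0$, so the minimum principle gives $\varphi\ge 0$, hence $\omega_\cX\ge 0$ on $\cX$ (and $\omega_\cX|\cX_s>0$ as already noted). If the family is not infinitesimally trivial in the direction $\pt_s$, then $A_s$ is a nonzero harmonic form, so $\int_{\cX_s}|A_s|^2 g\,dV>0$, and Proposition~\ref{pr:resol} yields
\[
\varphi(z)\;\ge\; P_n\!\left(d(\cX_s)\right)\cdot\int_{\cX_s}|A_s|^2 g\,dV \;>\;0
\qquad (z\in\cX_s),
\]
so by the Schur-complement description $\omega_\cX$ is positive in the horizontal direction $\pt_s$ at every point over $s$. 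For general $\dim S$ the same computation with indices $i,\ol\jmath$ replaces $\varphi$ by the Hermitian matrix $\varphi_{i\ol\jmath}=\omega_\cX(v_i,\ol v_j)$ satisfying $(\Box+1)\varphi_{i\ol\jmath}=A_i\cdot A_{\ol\jmath}$, and contracting with a tangent vector of $S$ reduces everything to the one-parameter case treated above; this completes the proof. The main obstacle is the curvature computation establishing $(\Box+1)\varphi=|A_s|^2$, together with the verification that the \ke-adapted lift produces the harmonic \ks representative --- this is precisely where the normalization ${\rm Ric}=-1$ enters and where the constant in $(\Box+1)$, matching the resolvent operator of Proposition~\ref{pr:resol}, originates.
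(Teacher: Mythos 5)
Your proposal is correct and follows essentially the same route as the paper: the Schur-complement function $\varphi=\omega_\cX(v,\ol v)$ for the $\omega_\cX$-horizontal lift (Lemmas~\ref{le:canlift}, \ref{le:varphi_0}, \ref{le:varphi}), the harmonicity of the induced \ks representative (Proposition~\ref{pr:harmrep}), the fiberwise elliptic equation $(\Box+1)\varphi=\|A_s\|^2$ (Proposition~\ref{pr:elleq}), and then positivity via the resolvent kernel estimate of Proposition~\ref{pr:resol}. The only cosmetic difference is that you phrase the reduction through the block-matrix Schur complement, whereas the paper encodes the same determinant identity as $\omega_\cX^{n+1}=\varphi\, g\, dV\,\ii ds\wedge\ol{ds}$.
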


Both the statement of the Theorem and the methods are valid for smooth,
proper families of singular (even non-reduced) complex spaces (for the
necessary theory cf.\ \cite{f-s:extremal}).

It is sufficient to prove the theorem for base spaces of dimension one
assuming $S\subset \C$. (In order to treat singular base spaces, the claim
can be reduced to the case where the base is a double point $(0,\mathbb
C[s]/(s^2))$. The arguments below will still be meaningful and can be
applied literally.)

We denote the \ks map for the family $f:\cX \to S$ at a given point
$s_0\in S$ by
$$
\rho_{s_0} :T_{s_0} \to H^1(X, \cT_X)
$$
where $X=\cX_{s_0}$. The family is called {\it effectively parameterized}
at $s_0$, if $\rho_{s_0}$ is injective. The \ks map is induced as edge
homomorphism by the short exact sequence
$$
0 \to  \cT_{\cX/S} \to \cT_\cX \to f^*\cT_S \to 0.
$$
If $v \in T_{s_0}S$ is a tangent vector, say $v=\frac{\pt}{\pt s}|_{s_0}$
and $\frac{\pt}{\pt s} + b^\alpha \frac{\pt}{\pt z^\alpha}$ is any lift of
class $\cinf$ to $\cX$ along $X$, then
$$
\ol\pt\left(\frac{\pt}{\pt s} + b^\alpha(z) \frac{\pt}{\pt z^\alpha}\right)=
\frac{\partial b^\alpha(z)}{\partial z^\ol\beta}
\frac{\pt}{\pt z^\alpha} dz^\ol\beta
$$
is a $\ol\pt$-closed form on $X$, which represents $\rho_{s_0}(\pt / \pt
s)$. Observe that $b^\alpha$ cannot be a tensor on $X$, unless the family
is infinitesimally trivial.

We will use the semi-colon notation as well as raising and lowering of
indices for covariant derivatives with respect to the {\it \ke metrics on
the fibers}. The $s$-direction will be indicated by the index $s$. In this
sense the coefficients of $\omega_\cX$ will be denoted by $g_{s\ol s}$,
$g_{\alpha\ol s}$, $\gab$ etc.

Next, we define {\it canonical lifts} of tangent vectors of $S$ as
differentiable vector fields on $\cX$ along the fibers of $f$ in the sense
of Siu \cite{siu:canlift}. By definition these satisfy  the property that
the induced representative of the \ks class is {\it harmonic}.

Since the form $\omega_\cX$ is positive, when restricted to fibers, {\em
horizontal lifts} of tangent vectors with respect to the pointwise
sesquilinear form $\langle-,-\rangle_{\omega_\cX}$ are well-defined  (cf.\
also \cite{sch:curv}).
\begin{lemma}\label{le:canlift}
The horizontal lift of $\pt/\pt s$  equals
$$
v = \pt_s + a_s^\alpha \pt_\alpha,
$$
where
$$
a_s^\alpha = - g^{\ol\beta \alpha} g_{s \ol \beta}.
$$
\end{lemma}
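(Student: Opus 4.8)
The plan is to characterize the horizontal lift intrinsically and then read off its coordinate components. By definition, the horizontal lift $v$ of $\pt/\pt s$ is the unique $(1,0)$-tangent vector field along the fibers whose projection to $S$ equals $\pt/\pt s$ and which is orthogonal, with respect to the pointwise sesquilinear form $\langle-,-\rangle_{\omega_\cX}$, to the vertical subbundle $\cT_{\cX/S}$. Since any two lifts of $\pt/\pt s$ differ by a vertical $(1,0)$-field, we may write $v = \pt_s + a_s^\alpha\pt_\alpha$, and it remains only to determine the functions $a_s^\alpha$.

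First I would check that the construction makes sense. By Theorem~\ref{th:main} — or already by the \ke equation, which gives $\omega_\cX|\cX_s = \omega_{\cX_s}$ — the restriction of $\omega_\cX$ to each fiber is positive definite, so the Hermitian matrix $(\gab)$ is invertible; consequently the $\langle-,-\rangle_{\omega_\cX}$-orthogonal complement of $\cT_{\cX/S}$ inside $\cT_\cX$ maps isomorphically onto $f^*\cT_S$ under the projection of the exact sequence $0\to\cT_{\cX/S}\to\cT_\cX\to f^*\cT_S\to 0$. Hence the horizontal lift of $\pt/\pt s$ exists and is unique.

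The computation itself is then immediate: the vertical subbundle is spanned by $\pt_1,\dots,\pt_n$, so the orthogonality condition reads $\langle v,\pt_\ol\beta\rangle_{\omega_\cX}=0$ for $\beta=1,\dots,n$. Expanding in the coefficients of $\omega_\cX = \ddb\log g(z,s)$ gives
\[
0 = \langle \pt_s + a_s^\alpha\pt_\alpha,\,\pt_\ol\beta\rangle_{\omega_\cX} = g_{s\ol\beta} + a_s^\alpha\, \gab ,
\]
and multiplying through by the inverse metric $\gba$ yields $a_s^\alpha = -\gba\, g_{s\ol\beta}$, which is exactly the asserted formula.

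There is no genuine obstacle here; the only thing requiring care is bookkeeping about conventions: that ``horizontal lift'' refers to the $\omega_\cX$-orthogonal complement of the vertical tangent space inside $\cT_\cX$ (rather than an orthogonal complement with respect to some metric on the total space, which need not be positive), that the lift is taken of type $(1,0)$ so that no $\pt_\ol\alpha$-component arises, and that $g_{s\ol\beta}=\pt_s\pt_\ol\beta\log g$. Once these are fixed, the lemma follows from the single displayed line above.
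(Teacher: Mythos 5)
Your argument is correct and is exactly the (omitted) computation the paper intends: the paper states the lemma without proof, taking the horizontal lift to be the $\langle-,-\rangle_{\omega_\cX}$-orthogonal complement of the vertical bundle, whose fiberwise nondegeneracy follows from $\omega_\cX|\cX_s=\omega_{\cX_s}>0$, and the orthogonality condition $g_{s\ol\beta}+a_s^\alpha\gab=0$ gives the formula. Only one caution: justify positivity on the fibers by the \ke equation alone (as you do in your parenthetical), not by Theorem~\ref{th:main}, since that theorem is proved later using this very lemma.
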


\begin{proposition}\label{pr:harmrep}
The horizontal lift induces the harmonic representative of
$\rho_{s_0}(\pt/\pt s)$.
\end{proposition}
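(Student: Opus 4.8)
The plan is to verify that the $(0,1)$-form on $X=\cX_{s_0}$ obtained by restricting $\db v$ to the fiber is harmonic, where $v=\pt_s+a_s^\alpha\pt_\alpha$ is the horizontal lift from Lemma~\ref{le:canlift}. Since $\db v$ is automatically $\db$-closed and represents $\rho_{s_0}(\pt/\pt s)$ (this is the general fact recalled before Lemma~\ref{le:canlift}, valid for any $\cinf$ lift), it suffices to check that it is $\db^*$-closed, i.e.\ coclosed. Writing $A_{\ol\beta}^\alpha:=\pt_{\ol\beta}a_s^\alpha$ for the components of the harmonic candidate $A=A_{\ol\beta}^\alpha\,\pt_\alpha\,dz^{\ol\beta}$, the coclosedness condition for a $\cT_X$-valued $(0,1)$-form on a \ka manifold is, up to lowering the upper index with $\gab$, the vanishing of the divergence $\nabla^{\ol\beta}A_{\ol\beta}^\alpha=0$ (equivalently $g^{\ol\beta\alpha}\nabla_{\ol\beta}(A_{\alpha\ol\gamma})$ with indices suitably placed). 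So the whole proof reduces to a local computation of $a_s^\alpha=-g^{\ol\beta\alpha}g_{s\ol\beta}$ and its $\ol\partial$-derivative.

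The key input is the \ke equation on the total space applied to the mixed components of $\omega_\cX=\ddb\log g(z,s)$. First I would record that $g_{s\ol\beta}=\pt_s\pt_{\ol\beta}\log g$ and, differentiating the fiberwise \ke identity $\gab=\pt_\alpha\pt_{\ol\beta}\log g$ in the $s$-direction, obtain a formula for $\pt_s\gab$ in terms of $g_{s\ol\beta}$ and hence relate $A_{\ab}$ (the harmonic \ks tensor with both indices down) to second derivatives of $g_{s\ol s}$-type quantities. Then I would compute $\nabla^{\ol\beta}A_{\ab}$: the terms coming from the Christoffel symbols combine with the derivatives of $g_{s\ol\beta}$, and one uses the commutation of covariant derivatives together with the \ke condition $R_{\ab}=-\gab$ to cancel everything. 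In essence, $a_s^\alpha$ was defined precisely so that $g_{\alpha\ol s}+g_{\alpha\ol\beta}a_{\ol s}^{\ol\beta}=0$, i.e.\ $v$ is $\omega_\cX$-orthogonal to the fiber, and this orthogonality translates directly into the harmonicity of the induced form. I would also note that the proposition can be stated infinitesimally, so it is enough to work on the first-order neighborhood of $s_0$ and one may freely choose normal coordinates at a point of $X$ to simplify the Christoffel terms.

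The main obstacle is purely computational: keeping careful track of the index gymnastics when differentiating $g^{\ol\beta\alpha}$ (which contributes $-g^{\ol\beta\delta}(\pt_s g_{\gamma\ol\delta})g^{\ol\beta\alpha}$-type terms) and correctly identifying which combination of terms is the covariant divergence rather than an ordinary divergence. The conceptual content is light—orthogonality to the fiber forces coclosedness—but the verification requires the \ke equation in an essential way, since on a general \ka fiber the horizontal lift need not be harmonic; the curvature term $R_{\ab}=-\gab$ is exactly what makes the leftover terms vanish. Once $\nabla^{\ol\beta}A_{\ab}=0$ is established, harmonicity follows from the \ka identities and the proof is complete.
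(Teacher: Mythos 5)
Your plan is exactly the paper's proof: it reduces harmonicity to $\db^*$-closedness of $A^\alpha_{s\ol\beta}=a^\alpha_{s;\ol\beta}$ and verifies the vanishing of $g^{\ol\beta\gamma}A^\alpha_{s\ol\beta;\gamma}$ by the same direct computation — substituting $a_s^\alpha=-g^{\ol\beta\alpha}g_{s\ol\beta}$, using $g_{s\ol\beta}=\pt_s\pt_{\ol\beta}\log g$ together with the fiberwise \ke identity, commuting covariant derivatives to produce a curvature term, and cancelling it against $(\pt_s\log g)_{;\ol\delta}$ via ${\rm Ric}=-\omega$. The ``index gymnastics'' you defer is precisely the short covariant calculation the paper carries out, so the approach is the same and succeeds.
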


\begin{proof}
The \ks form of the tangent vector $\pt/\pt_{s_0}$is given by $\ol\pt
v|\cX_s = a^\alpha_{s;\ol\beta}\pt_\alpha dz^\ol\beta$. We consider the
tensor
$$
A^\alpha_{s\ol\beta}:= a^\alpha_{s;\ol\beta}|{\cX_{s_0}}
$$
on $X$. Then
\begin{gather*}
g^{\ol\beta\gamma} A^\alpha_{s\ol\beta;\gamma}= - g^{\ol\beta\gamma}
g^{\ol\delta \alpha} g_{s\ol\delta;\ol\beta\gamma} = - g^{\ol\beta\gamma}
g^{\ol\delta \alpha} g_{s\ol\beta;\ol\delta\gamma} =
-g^{\ol\beta\gamma}g^{\ol\delta \alpha} \left(
g_{s\ol\beta;\gamma\ol\delta} -
g_{s\ol\tau}R^\ol\tau_{\; \ol\beta\ol\delta\gamma} \right)\\
=-g^{\ol\delta\alpha}\left(\left({\pt\log g}/{\pt s} \right)_{;\ol\delta}
+ g_{s\ol \tau}R^\ol\tau_{\; \ol\delta}\right) = 0.
\end{gather*}
\end{proof}
It follows immediately from the proposition that the harmonic \ks forms
induce symmetric tensors. This fact reflects the close relationship
between the \ks tensors and the \ke metrics.
\begin{corollary}\label{co:symm}
Let $A_{s \ol\beta\,\ol\delta}= g_{\alpha\ol\beta}A^\alpha_{s\ol\delta}$.
Then
\begin{equation}
A_{s \ol\beta\,\ol\delta}=A_{s \ol\delta\,\ol\beta}.
\end{equation}
\end{corollary}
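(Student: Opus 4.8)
The plan is to derive the symmetry of $A_{s\ol\beta\ol\delta}$ directly from the expression for the harmonic representative obtained via the horizontal lift in Lemma~\ref{le:canlift}. Recall that the horizontal lift is $v = \pt_s + a_s^\alpha\pt_\alpha$ with $a_s^\alpha = -g^{\ol\beta\alpha}g_{s\ol\beta}$, so that $A^\alpha_{s\ol\delta} = a^\alpha_{s;\ol\delta}$ and hence, lowering the upper index,
\begin{equation*}
A_{s\ol\beta\ol\delta} = g_{\alpha\ol\beta}\,a^\alpha_{s;\ol\delta}
= g_{\alpha\ol\beta}\bigl(-g^{\ol\gamma\alpha}g_{s\ol\gamma}\bigr)_{;\ol\delta}
= -\,g_{s\ol\beta;\ol\delta},
\end{equation*}
where in the last step I use that the metric is parallel, so covariant differentiation passes through $g_{\alpha\ol\beta}g^{\ol\gamma\alpha} = \delta^\ol\gamma_\ol\beta$. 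Thus the claim reduces to the symmetry $g_{s\ol\beta;\ol\delta} = g_{s\ol\delta;\ol\beta}$.

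Next I would unwind the covariant derivatives. Both $\ol\beta$ and $\ol\delta$ are anti-holomorphic indices, and the only Christoffel symbols present on a \ka manifold are $\Gamma^\alpha_{\beta\gamma}$ and $\Gamma^\ol\alpha_{\ol\beta\ol\gamma} = \ol{\Gamma^\alpha_{\beta\gamma}}$, which are symmetric in their lower two indices. Writing $g_{s\ol\beta} = \pt_{\ol\beta}(\pt\log g/\pt s)$ — this is exactly the identity already invoked at the end of the proof of Proposition~\ref{pr:harmrep}, coming from the \ke equation $g_{\alpha\ol\beta} = \pt_\alpha\pt_{\ol\beta}\log g$ differentiated in $s$ — one computes
\begin{equation*}
g_{s\ol\beta;\ol\delta}
= \pt_{\ol\delta}\pt_{\ol\beta}\Bigl(\frac{\pt\log g}{\pt s}\Bigr)
- \Gamma^{\ol\tau}_{\ol\beta\ol\delta}\,\pt_{\ol\tau}\Bigl(\frac{\pt\log g}{\pt s}\Bigr),
\end{equation*}
and the right-hand side is manifestly symmetric under $\ol\beta\leftrightarrow\ol\delta$ because ordinary partials commute and $\Gamma^{\ol\tau}_{\ol\beta\ol\delta}$ is symmetric. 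This gives $g_{s\ol\beta;\ol\delta} = g_{s\ol\delta;\ol\beta}$, hence $A_{s\ol\beta\ol\delta} = A_{s\ol\delta\ol\beta}$.

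An alternative, more conceptual route — which I would mention as a remark — is that $A_{s\ol\delta}^\alpha\pt_\alpha dz^{\ol\delta}$ is $\ol\pt$-harmonic, hence $\ol\pt$-closed as a $\cT_X$-valued $(0,1)$-form, which gives $A^\alpha_{s\ol\delta;\ol\beta} = A^\alpha_{s\ol\beta;\ol\delta}$ after accounting for the Christoffel correction; lowering the index with the parallel metric then transfers this to the symmetry of $A_{s\ol\beta\ol\delta}$. Either way, the computation is routine once one recognizes that the content is simply the \ke relation $g_{\alpha\ol\beta} = \pt_\alpha\pt_{\ol\beta}\log g$ differentiated in the parameter. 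There is no real obstacle here; the only point requiring a little care is the bookkeeping of which Christoffel symbols survive on a \ka manifold and the sign in passing $g^{\ol\gamma\alpha}$ through the covariant derivative — everything else follows by the commutativity of second partials.
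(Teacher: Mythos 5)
Your main argument is correct and is exactly the paper's intended (unwritten) proof: the corollary is stated to follow ``immediately'' from Proposition~\ref{pr:harmrep}, namely from $A_{s\ol\beta\ol\delta}=g_{\alpha\ol\beta}\,a^\alpha_{s;\ol\delta}=-g_{s\ol\beta;\ol\delta}=-(\pt_s\log g)_{;\ol\beta\ol\delta}$, which is symmetric because it is the second covariant derivative, in two anti-holomorphic indices, of a function on the \ka fiber -- precisely your computation (and the same interchange is already used inside the proof of Proposition~\ref{pr:harmrep}).

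One caveat on your closing ``alternative route'': it does not work as stated. $\ol\pt$-closedness of $A_s$ as a $\cT_{\cX_s}$-valued $(0,1)$-form gives $A^\alpha_{s\ol\delta;\ol\beta}=A^\alpha_{s\ol\beta;\ol\delta}$, and lowering with the parallel metric yields $A_{s\ol\gamma\ol\delta;\ol\beta}=A_{s\ol\gamma\ol\beta;\ol\delta}$, i.e.\ symmetry in the two \emph{derivative} indices -- not the asserted symmetry of $A_{s\ol\gamma\ol\delta}$ itself; closedness alone constrains derivatives and cannot force the tensor to be symmetric. What the symmetry genuinely uses is that the lowered potential $a_{s\ol\beta}=-g_{s\ol\beta}=-\pt_{\ol\beta}(\pt_s\log g)$ is $\ol\pt$-exact on the fiber, so the skew part of $a_{s\ol\beta;\ol\delta}=-A_{s\ol\beta\ol\delta}$, being $\ol\pt$ of an exact $(0,1)$-form, vanishes -- which is just a reformulation of your main argument, so keep that as the proof and drop or repair the remark.
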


Next, we introduce a {\it global} function $\varphi(z,s)$, which is by
definition the {\em pointwise inner product of the canonical lift $v$ of
$\pt/\pt s$ at $s\in S$} with itself with respect to $\omega_\cX$.
\begin{definition}\label{de:varphi}
\begin{equation}\label{eq:varphidef}
\varphi(z,s) := \langle \pt_s + a_s^\alpha \pt_\alpha, \pt_s + a_s^\beta
\pt_\beta  \rangle_{\omega_\cX}
\end{equation}
\end{definition}

Since $\omega_\cX$ is not known to be positive definite in all directions,
$\varphi\geq 0$ is not known at this point.
\begin{lemma}\label{le:varphi_0}
\begin{equation}\label{eq:varphi}
\varphi =  g_{s\ol s} - g_{\alpha\ol s}
g_{s\ol\beta} g{^{\ol\beta\alpha}}
\end{equation}
\end{lemma}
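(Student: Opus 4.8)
The plan is to compute the sesquilinear form $\langle v, v\rangle_{\omega_\cX}$ directly from the definition of $\omega_\cX$, using the explicit formula for the horizontal lift coefficients $a_s^\alpha = -g^{\ol\beta\alpha}g_{s\ol\beta}$ from Lemma~\ref{le:canlift}. First I would write out the pairing bilinearly in terms of the components of $\omega_\cX$, namely $g_{s\ol s}$, $g_{\alpha\ol s}$, $g_{s\ol\beta}$ and $\gab$. Expanding,
\begin{equation*}
\varphi = g_{s\ol s} + a_s^\alpha g_{\alpha\ol s} + \ol{a_s^\beta}\, g_{s\ol\beta} + a_s^\alpha \ol{a_s^\beta}\, \gab,
\end{equation*}
where $\ol{a_s^\beta} = -g^{\ol\beta\alpha}g_{\alpha\ol s}$ is the complex conjugate (using that the metric is Hermitian, so $\ol{g_{s\ol\beta}} = g_{\beta\ol s}$ and $\ol{g^{\ol\beta\alpha}} = g^{\ol\alpha\beta}$, with a harmless relabelling of summation indices).

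The main computational point is then the cancellation of the cross terms against part of the quartic term. Substituting $a_s^\alpha = -g^{\ol\beta\alpha}g_{s\ol\beta}$ into the second and fourth summands gives $a_s^\alpha g_{\alpha\ol s} = -g^{\ol\beta\alpha}g_{s\ol\beta}g_{\alpha\ol s}$ and
\begin{equation*}
a_s^\alpha \ol{a_s^\beta}\, \gab = g^{\ol\delta\alpha}g_{s\ol\delta}\, g^{\ol\beta\gamma}g_{\gamma\ol s}\, g_{\alpha\ol\beta} = g^{\ol\beta\gamma}g_{\gamma\ol s}\, g_{s\ol\beta} = -\,a_s^\alpha g_{\alpha\ol s},
\end{equation*}
using $g_{\alpha\ol\beta}g^{\ol\delta\alpha} = \delta^\ol\delta_\ol\beta$. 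Thus the quartic term exactly equals minus one of the linear cross terms, and likewise one checks the third summand pairs up; after all cancellations one is left with $\varphi = g_{s\ol s} - g_{\alpha\ol s}g_{s\ol\beta}g^{\ol\beta\alpha}$, which is \eqref{eq:varphi}.

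I expect no serious obstacle here: the only thing to be careful about is bookkeeping of barred versus unbarred indices and the placement of the summation indices in the inverse metric $g^{\ol\beta\alpha}$ versus $g^{\ol\alpha\beta}$, so that the Hermitian symmetry of $\omega_\cX$ is used correctly and the two mixed terms $g_{\alpha\ol s}g_{s\ol\beta}g^{\ol\beta\alpha}$ that appear are genuinely the same expression. Everything else is the routine substitution of Lemma~\ref{le:canlift} into Definition~\ref{de:varphi}.
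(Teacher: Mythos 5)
Your proposal is correct and follows essentially the same route as the paper: expand $\varphi = g_{s\ol s} + a_s^\alpha g_{\alpha\ol s} + a_{\ol s}^{\ol\beta} g_{s\ol\beta} + a_s^\alpha a_{\ol s}^{\ol\beta}\gab$ and substitute $a_s^\alpha = -g^{\ol\beta\alpha}g_{s\ol\beta}$ from Lemma~\ref{le:canlift}, with the quartic term cancelling one of the two equal cross terms. The index bookkeeping in your cancellation is accurate, so nothing further is needed.
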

\begin{proof}
The proof follows from Lemma~\ref{le:canlift} and
$$
\varphi = g_{s\ol s} + g_{s\ol\beta}a^\ol\beta_{\ol s} + a_s^\alpha g_{\alpha\ol s}
+ a_s^\alpha a_{\ol s}^\ol\beta \gab.
$$
\end{proof}

Denote by $\omega^{n+1}_\cX$ the $(n+1)$-fold exterior product, divided by
$(n+1)!$ and by $dV$ the Euclidean volume element in fiber direction. Then
the global real function $\varphi$ satisfies the following property:
\begin{lemma}\label{le:varphi}
$$
\omega^{n+1}_\cX= \varphi \cdot g \cdot dV\ii ds\wedge \ol{ds}.
$$
\end{lemma}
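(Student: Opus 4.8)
The plan is to compute the top exterior power $\omega_\cX^{n+1}$ directly in the local holomorphic coordinates $(z^1,\dots,z^n,s)$ introduced above, expanding everything in terms of the coefficients $g_{\alpha\ol\beta}$, $g_{\alpha\ol s}$, $g_{s\ol\beta}$, $g_{s\ol s}$ of the $(1,1)$-form $\omega_\cX = \ii g_{a\ol b}\,dz^a\wedge dz^{\ol b}$ (indices $a,b$ running over $\alpha,\dots$ and $s$). Since $\omega_\cX^{n+1}/(n+1)!$ picks out, up to the standard $\ii$-factors, the determinant of the full $(n+1)\times(n+1)$ hermitian matrix $(g_{a\ol b})$ times $dV \wedge \ii\,ds\wedge\ol{ds}$, the statement reduces to the linear-algebra identity
\begin{equation*}
\det\begin{pmatrix} g_{s\ol s} & g_{s\ol\beta}\\ g_{\alpha\ol s} & g_{\alpha\ol\beta}\end{pmatrix} = \varphi\cdot \det(g_{\alpha\ol\beta}) = \varphi\cdot g,
\end{equation*}
where $\varphi$ is as given in Lemma~\ref{le:varphi_0}.

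First I would fix the point and choose the coordinates so that the fiber volume form is $dV = \ii^n dz^1\wedge dz^{\ol 1}\wedge\cdots$ (absorbing constants), and carefully record the combinatorial factor: of the $(n+1)!$ terms in $\omega_\cX^{n+1}$, exactly those monomials containing $ds\wedge\ol{ds}$ survive when we integrate over (or restrict attention to) the fibers, and collecting them produces the cofactor expansion of the bordered matrix along the $s$-row and $s$-column. Next I would apply the standard Schur-complement / block-determinant formula: since $(g_{\alpha\ol\beta})$ is invertible (it is the \ke metric on the fiber, positive definite by \eqref{eq:ke}),
\begin{equation*}
\det\begin{pmatrix} g_{s\ol s} & g_{s\ol\beta}\\ g_{\alpha\ol s} & g_{\alpha\ol\beta}\end{pmatrix} = \det(g_{\alpha\ol\beta})\cdot\bigl(g_{s\ol s} - g_{\alpha\ol s}\,g^{\ol\beta\alpha}\,g_{s\ol\beta}\bigr),
\end{equation*}
and the second factor is precisely $\varphi$ by Lemma~\ref{le:varphi_0}. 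Combining these two steps with the bookkeeping of the $\ii$'s yields the claimed formula.

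The only real obstacle is the bookkeeping: getting the sign conventions and the placement of the $\ii$'s exactly right when passing between $\omega_\cX^{n+1}/(n+1)!$ and the determinant, and making sure the $ds\wedge\ol{ds}$ factor is extracted with the correct orientation so that no spurious sign or factorial slips in. Once the convention for $dV$ and for $\ii\,ds\wedge\ol{ds}$ is pinned down (as fixed in the statement), this is routine; I would verify it by checking the case $n=0$ (where the identity is tautological) and the case $n=1$ against the classical expression $\varphi = g_{s\ol s} - |g_{1\ol s}|^2/g_{1\ol 1}$. Note also that this lemma makes transparent the geometric meaning of $\varphi$: by Lemma~\ref{le:canlift} the quantity $\varphi$ is the squared norm of the horizontal lift $v$, so $\omega_\cX^{n+1}$ is positive at a point exactly when $v\neq 0$ there, i.e. when the family is not infinitesimally trivial in the $s$-direction — which is the link to Theorem~\ref{th:main}.
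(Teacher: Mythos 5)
Your proposal is correct and follows essentially the same route as the paper: the paper's proof is precisely the computation of the bordered $(n+1)\times(n+1)$ determinant $\det\begin{pmatrix} g_{s\ol s} & g_{s\ol\beta}\\ g_{\alpha\ol s} & \gab\end{pmatrix}$, which your Schur-complement step evaluates as $g\cdot\bigl(g_{s\ol s}-g_{\alpha\ol s}g^{\ol\beta\alpha}g_{s\ol\beta}\bigr)=g\,\varphi$ via Lemma~\ref{le:varphi_0}. The coordinate expansion and bookkeeping you describe are exactly the details the paper leaves implicit.
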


\begin{proof}
Compute the following $(n+1)\times(n+1)$-determinant
$$ \det
\left(
\begin{array}{cc}
g_{s\ol s} & g_{s\ol\beta}\\ g_{\alpha\ol s}& \gab
\end{array}
\right),
$$
where $\alpha,\beta=1,\ldots,n$.
\end{proof}

So far we are looking at {\it local} computations, which essentially only
involve derivatives of certain tensors. The only {\it global ingredient}\/
is the fact that we are given global solutions of the \ke equation.

The key quantity is the differentiable function $\varphi$ on $\cX$.
Restricted to any fiber it ties together the yet to be proven positivity
of the hermitian metric on the relative canonical bundle and the canonical
lift of tangent vectors, which is related to the harmonic \ks forms.

We use the Laplacian operators $\Box_{g,s}$ with non-negative eigenvalues
on the fibers $\cX_s$ so that for a real valued function $\chi$ the
Laplacian equals $\Box_{g,s}\chi
 = - g^{\ol\beta\alpha}\chi_{;\alpha\ol\beta}$.
\begin{proposition}\label{pr:elleq}
The following elliptic equation holds fiberwise:
\begin{equation}\label{eq:phiA}
(\Box_{g,s} + {\rm id})\varphi(z,s) = \|A_s(z,s)\|^2,
\end{equation}
where
$$
A_s=A^\alpha_{s\ol\beta} \frac{\pt}{\pt z^\alpha}dz^\ol\beta
$$
is the harmonic representative of the \ks class $\rho_s(\frac{\pt}{\pt
s})$ as above.
\end{proposition}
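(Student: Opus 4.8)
The plan is to compute the fiberwise Laplacian of $\varphi$ directly from the formula \eqref{eq:varphi}, namely $\varphi = g_{s\ol s} - g_{\alpha\ol s} g_{s\ol\beta} g^{\ol\beta\alpha}$, using covariant derivatives with respect to the \ke metrics on the fibers. The point is that every ingredient appearing in $\varphi$ is, up to curvature terms, a covariant derivative of the metric tensor in the $s$-direction, and such derivatives are controlled by the \ke equation. First I would recall from Lemma~\ref{le:canlift} that $a_s^\alpha = -g^{\ol\beta\alpha} g_{s\ol\beta}$, so that $A_{s\ol\beta}^\alpha = a^\alpha_{s;\ol\beta}$ is the harmonic \ks tensor (Proposition~\ref{pr:harmrep}); in particular $g^{\ol\beta\gamma}A^\alpha_{s\ol\beta;\gamma}=0$, which is the harmonicity that will make the cross terms collapse. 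I would also use the symmetry $A_{s\ol\beta\ol\delta} = A_{s\ol\delta\ol\beta}$ from Corollary~\ref{co:symm}.

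The computation proceeds in steps. First, differentiate $\varphi$ covariantly once in a fiber direction $\partial_\gamma$ and then in $\partial_{\ol\delta}$, and trace with $g^{\ol\delta\gamma}$ to form $-\Box_{g,s}\varphi$. The term $g_{s\ol s}$ contributes $g^{\ol\delta\gamma} g_{s\ol s;\gamma\ol\delta}$; the bilinear term $-g_{\alpha\ol s}g_{s\ol\beta}g^{\ol\beta\alpha}$ contributes, after the Leibniz rule, a sum of a "diagonal" piece $-g^{\ol\delta\gamma} g_{\alpha\ol s;\gamma}g_{s\ol\beta;\ol\delta}g^{\ol\beta\alpha}$ (which is exactly $-\|A_s\|^2$ once one identifies $g_{s\ol\beta;\ol\delta} = g_{\ol\beta\ol\tau}A^{\ol\tau}_{\ldots}$, i.e. $\|A_s\|^2 = A^\alpha_{s\ol\delta}\ol{A^\beta_{s\ol\gamma}} g_{\alpha\ol\beta} g^{\ol\delta\gamma}$ using the lowered tensor of Corollary~\ref{co:symm}) plus second-derivative pieces $-g^{\ol\delta\gamma}(g_{\alpha\ol s;\gamma\ol\delta} g_{s\ol\beta} + g_{\alpha\ol s} g_{s\ol\beta;\gamma\ol\delta})g^{\ol\beta\alpha}$ plus terms where derivatives land on $g^{\ol\beta\alpha}$. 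The second step is to commute covariant derivatives in the second-derivative pieces, picking up Riemann curvature terms, exactly as in the proof of Proposition~\ref{pr:harmrep}: there one used $g_{s\ol\beta;\ol\delta\gamma} = g_{s\ol\beta;\gamma\ol\delta} - g_{s\ol\tau}R^{\ol\tau}_{\ \ol\beta\ol\delta\gamma}$ and then the Bianchi-type symmetry $g_{s\ol\delta;\ol\beta\gamma} = g_{s\ol\beta;\ol\delta\gamma}$, and ultimately the \ke equation $R_{\ol\delta} = -g_{\ol\delta}$ (Ricci $=-1$) to show $g^{\ol\beta\gamma}A^\alpha_{s\ol\beta;\gamma}=0$. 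The third step is to assemble: the harmonicity kills the terms where a single trace acts on $A_s$, the \ke equation converts the surviving curvature contractions into multiples of $g_{s\ol s}$ and of the bilinear term, and the $g^{\ol\delta\gamma} g_{s\ol s;\gamma\ol\delta}$ term should recombine with the $-\varphi$ coming from those Ricci traces, leaving precisely $-\Box_{g,s}\varphi = \|A_s\|^2 - \varphi$, i.e. \eqref{eq:phiA}.

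The main obstacle is bookkeeping the curvature terms correctly. When the two extra covariant derivatives in $g_{\alpha\ol s;\gamma\ol\delta}$ and $g_{s\ol\beta;\gamma\ol\delta}$ are commuted, one generates contractions of the full Riemann tensor against the metric, and one must use the \ke identity $R_{\alpha\ol\beta} = -g_{\alpha\ol\beta}$ (and its covariant constancy) to reduce these to the clean expressions $g_{s\ol s}$ and $g_{\alpha\ol s}g_{s\ol\beta}g^{\ol\beta\alpha}$; getting the numerical coefficient to come out to exactly $+1$ (the coefficient of $\varphi$, hence of the identity operator) is the delicate part. A subsidiary point is that $g_{s\ol s;\gamma\ol\delta}$ is itself not simply a derivative of a \ke potential — one must differentiate the \ke equation $\omega_\cX = \ddb\log g$ in the $s$-direction and use that $\partial_s\partial_{\ol s}\log g = g_{s\ol s}$ together with the fiberwise \ke equation to handle the purely horizontal second derivatives. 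Once the curvature algebra is organized, the identity \eqref{eq:phiA} follows; the only genuinely global input, as remarked in the text, is that $g$ solves the \ke equation on the whole fiber.
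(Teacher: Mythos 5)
Your plan is correct and follows essentially the same route as the paper's own proof: a direct computation of the fiberwise Laplacian of $\varphi$ from \eqref{eq:varphi}, eliminating second-order base derivatives through the identity $g_{s\ol s;\gamma\ol\delta}=\pt_s\pt_{\ol s}g_{\gamma\ol\delta}$ furnished by the fiberwise \ke equation, then commuting covariant derivatives and using the harmonicity of $A_s$ together with $\mathrm{Ric}(\omega_{\cX_s})=-\omega_{\cX_s}$ to reduce the curvature contractions. The bookkeeping you defer works out exactly as you predict: the mixed term $g^{\ol\delta\gamma}g^{\ol\beta\alpha}g_{s\ol\beta;\gamma}\,g_{\alpha\ol s;\ol\delta}$ cancels against the extra term produced by tracing $g_{s\ol s;\gamma\ol\delta}$, while the curvature contraction returns the bilinear term of $\varphi$, which is precisely how the coefficient $+1$ of the identity operator emerges in the paper.
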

\begin{proof}
The essence to prove an elliptic equation for $\varphi$ involving tensors
on the fibers is to eliminate the second order derivatives with respect to
the base parameter. This is achieved by the left hand side of
\eqref{eq:phiA}. First,
\begin{eqnarray*}
g^{\ol\delta\gamma}g_{s\ol s;\gamma\ol\delta}
&=&g^{\ol\delta\gamma}\partial_s\partial_\ol s g_{\gamma\ol\delta}\\
&=&\partial_s(g^{\ol\delta\gamma}\partial_\ol s g_{\gamma\ol\delta})
 -a_s^{\gamma;\ol\delta}\partial_\ol s g_{\gamma\ol\delta}\\
&=&\partial_s\partial_\ol s \log g
 +a_s^{\gamma;\ol\delta} a_{\ol s\gamma;\ol\delta}\\
&=& g_{s \ol s}
 +a_s^\sigma{}_{;\gamma} a_{\ol s\sigma;\ol\delta} g^{\ol\delta\gamma}.
\end{eqnarray*}
Next
\begin{eqnarray*}
(a_s^\sigma a_{\ol s\sigma})_{;\gamma\ol\delta}g^{\ol\delta\gamma}
&=\left(a_s^\sigma{}_{;\gamma\ol\delta} a_{\ol s\sigma}
        +A_{s\ol\delta}^\sigma A_{\ol s\sigma\gamma}
        +a_{s;\gamma}^\sigma a_{\ol s\sigma;\ol\delta}
        +a_s^\sigma A_{\ol s\sigma\gamma;\ol\delta}
\right)g^{\ol\delta\gamma} .
\end{eqnarray*}
The last term vanishes because of the harmonicity of $A_s$, and
\begin{eqnarray*}
a_{s;\gamma\ol\delta}^\sigma g^{\ol\delta\gamma}
&=&A_{s\ol\delta;\gamma}^\sigma g^{\ol\delta\gamma}
  +a_s^\lambda R^\sigma{}_{\lambda\gamma\ol\delta}g^{\ol\delta\gamma}\\
&=&0-a_s^\lambda R^\sigma{}_\lambda\\
 &=& a_s^\sigma .
\end{eqnarray*}
\end{proof}

\begin{definition}\label{de:wpherm}
The \wp hermitian product on $T_sS$ is given by the $L^2$-inner product of
harmonic \ks forms:
\begin{equation}\label{eq:wpherm}
\Big\|\frac{\pt}{\pt s}\Big\|^2_{WP}:= \int_{\cX_s} A^\alpha_{s\ol\beta} A^\ol\delta_{\ol s\gamma }
g_{\alpha\ol\delta}g^{\ol\beta\gamma} g \, dV =
\int_{\cX_s} A^\alpha_{s\ol\beta} A^\ol\beta_{\ol s\alpha } g \, dV
\end{equation}
If $\frac{\pt}{\pt s^i}\in T_sS$ are part of a basis, we denote by
$G^{WP}_{i\ol\jmath}(s)$ the inner product, and set
$$
\omega^{WP}:= \ii G^{WP}_{i\ol\jmath} ds^i\we ds^{\ol\jmath}
$$
\end{definition}
Observe that the generalized \wp form is equal to a fiber integral:
\begin{proposition}[cf.\ \cite{f-s:extremal}]
\begin{equation}\label{eq:wpfib}
\omega^{WP} = \int_{\cX/S} \omega^{n+1}_\cX.
\end{equation}
\end{proposition}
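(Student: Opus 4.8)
The plan is to reduce the asserted identity to the fiberwise elliptic equation of Proposition~\ref{pr:elleq} together with the determinant computation of Lemma~\ref{le:varphi}. Both $\omega^{WP}$ and the fiber integral $\int_{\cX/S}\omega^{n+1}_\cX$ are real $(1,1)$-forms on $S$ whose formation is compatible with base change: the fiber integral by its very definition (Section~2), and the generalized \wp form because the harmonic \ks representatives behave functorially under restriction of the family to a submanifold of $S$. Hence it suffices to treat the case $\dim S=1$, say $S\subset\C$ with coordinate $s$, and to compare the coefficients of $\ii\,ds\wedge d\ol s$.

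First I would invoke Lemma~\ref{le:varphi}, which gives on the total space $\omega^{n+1}_\cX=\varphi\cdot g\cdot dV\wedge\ii\,ds\wedge\ol{ds}$. Applying fiber integration (and using that integration commutes with the type decomposition, so that only this vertical summand contributes) yields
$$
\int_{\cX/S}\omega^{n+1}_\cX=\left(\int_{\cX_s}\varphi(z,s)\,g\,dV\right)\ii\,ds\wedge d\ol s .
$$
Thus the claim is equivalent to the pointwise identity $\int_{\cX_s}\varphi\,g\,dV=\|\pt/\pt s\|^2_{WP}$ for every $s\in S$.

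To prove this, integrate the elliptic equation \eqref{eq:phiA} of Proposition~\ref{pr:elleq} over the compact fiber $\cX_s$ against $g\,dV$. The term $\int_{\cX_s}\Box_{g,s}\varphi\cdot g\,dV$ vanishes by self-adjointness of $\Box_{g,s}$ applied to the constant function $1$ (equivalently, $\int_{\cX_s}\Box_{g,s}\varphi\cdot g\,dV=\langle\ol\pt\varphi,\ol\pt 1\rangle_{L^2}=0$). Hence $\int_{\cX_s}\varphi\,g\,dV=\int_{\cX_s}\|A_s\|^2 g\,dV$, and the right-hand side is exactly $\|\pt/\pt s\|^2_{WP}$ by Definition~\ref{de:wpherm}. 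Combining with the previous display gives $\int_{\cX/S}\omega^{n+1}_\cX=\|\pt/\pt s\|^2_{WP}\,\ii\,ds\wedge d\ol s=\omega^{WP}$.

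The only genuine subtlety, and the step I would treat most carefully, is the reduction to $\dim S=1$: one must verify that restricting the family to a one-dimensional submanifold $S'\subset S$ through a given point and tangent direction alters neither side, i.e.\ that the canonical lift, the induced harmonic \ks form, and the relative \ke form $\omega_\cX$ for the restricted family agree at that point with the corresponding objects for the original family. This is precisely the base-change compatibility emphasized in the introduction. For a general base one may instead argue directly: replace $\varphi$ by the pointwise sesquilinear quantity $\varphi_{i\ol\jmath}:=\langle v_i,v_j\rangle_{\omega_\cX}$ attached to a pair of canonical lifts $v_i=\pt_{s^i}+a^\alpha_{s^i}\pt_\alpha$, which satisfies the polarized elliptic equation $(\Box_{g,s}+\mathrm{id})\varphi_{i\ol\jmath}=A_i\cdot A_{\ol\jmath}$; expanding the $(n+1)\times(n+1)$ determinant as in Lemma~\ref{le:varphi} shows that $\varphi_{i\ol\jmath}$ is the coefficient of $\omega^{n+1}_\cX$, and fiber integration then produces $G^{WP}_{i\ol\jmath}(s)$ directly.
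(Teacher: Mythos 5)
Your argument is correct and is exactly the route the paper indicates: Lemma~\ref{le:varphi} identifies the fiberwise coefficient of $\omega_\cX^{n+1}$ as $\varphi$, and integrating the elliptic equation of Proposition~\ref{pr:elleq} over the fiber (the $\Box$-term dropping out by self-adjointness) gives $\int_{\cX_s}\varphi\, g\, dV=\|\pt/\pt s\|^2_{WP}$. The filled-in details, including the reduction to $\dim S=1$ (or the polarized variant $\varphi_{i\ol\jmath}$), match the paper's intended proof.
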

The proposition implies the \ka property of $\omega^{WP}$ immediately. The
{\it proof} follows from Lemma~\ref{le:varphi} and
Proposition~\ref{pr:elleq}.

\section{Curvature of $R^{n-p}f_*\Omega^p_{\cX/S}(\cK_{\cX/S}^{\otimes m})$
-- Statement of the theorem and Applications}

\subsection{Statement of the theorem}\label{ss:curv} We consider an effectively
parameterized family $\cX \to S$ of canonically polarized manifolds,
equipped with \ke metrics of constant Ricci curvature $-1$. For any $m>0$
the direct image sheaves
$f_*\cK_{\cX/S}^{\otimes(m+1)}=f_*\Omega^n_{\cX/S}(\cK_{\cX/S}^{\otimes
m})$ are locally free. For values of $p$ other than $n$ we assume local
freeness of
$$
R^{n-p}f_*\Omega^p_{\cX/S}(\cK_{\cX/S}^{\otimes m}).
$$
The assumptions of Section~\ref{ss:dolb} are satisfied by Kodaira-Nakano
vanishing so that we can apply Lemma~\ref{le:dolb}. If necessary, we
replace $S$ by a (Stein) open subset, such that the direct image is
actually free, and denote by $\{\psi^1,\ldots,\psi^r\}\subset
R^{n-p}f_*\Omega^p_{\cX/S}(\cK_{\cX/S}^{\otimes m})(S)$ a basis of the
corresponding free $\cO_S$-module, and at a given point $s\in S$ we denote
by $\{(\pt/\pt s_i)|_s; i=1,\ldots,M\}$ a basis of the complex tangent
space $T_sS$ of $S$ over $\C$, where the $s_i$ are holomorphic coordinate
functions of a minimal smooth ambient space $U\subset \C^M$.

Let $A_{i \ol\beta}^\alpha(z,s) \pt_\alpha dz^{\ol\beta}$ be a harmonic
\ks form. Then for  $s \in S$ the cup product together with the
contraction defines
\begin{footnotesize}
\begin{eqnarray}
A_{i \ol\beta}^\alpha \pt_\alpha dz^{\ol\beta}\cup \textvisiblespace :
\cA^{0,n-p}(\cX_s,\Omega^p_{\cX_s}(\cK_{\cX_s}^{\otimes m})) &\to&
\cA^{0,n-p+1}(\cX_s,\Omega^{p-1}_{\cX_s}(\cK_{\cX_s}^{\otimes m}))\label{eq:cup1}\\ A_{\ol \jmath
\alpha}^\ol\beta \pt_\ol\beta dz^{\alpha}\cup \textvisiblespace :
\cA^{0,n-p}(\cX_s,\Omega^p_{\cX_s}(\cK_{\cX_s}^{\otimes m})) &\to&
\cA^{0,n-p-1}(\cX_s,\Omega^{p+1}_{\cX_s}(\cK_{\cX_s}^{\otimes m})).\label{eq:cup2}
\end{eqnarray}
\end{footnotesize}
\enlargethispage{.5cm} We will apply the above product to harmonic
$(0,n-p)$-forms. In general the result is not harmonic. We use the
notation $\psi^{\ol\ell}:= \ol{\psi^\ell}$ for sections $\psi_k$ (and a
notation of similar type for tensors on the fibers):

\begin{theorem}\label{th:curvgen}
The curvature tensor for $R^{n-p}f_*\Omega^p_{\cX/S}(\cK_{\cX/S}^{\otimes
m})$ is given by
\begin{eqnarray}\label{eq:curvgen}
R_{i\ol\jmath}^{\phantom{{i\ol\jmath}}\ol\ell k}(s)&=& m \int_{\cX_s}
\left( \Box + 1 \right)^{-1}(A_i\cdot A_\ol\jmath) \cdot(\psi^k \cdot
\psi^\ol\ell) g\/ dV\nonumber\\
&& \quad + m \int_{\cX_s} \left( \Box + m \right)^{-1} (A_i\cup\psi^k)
\cdot (A_\ol\jmath \cup \psi^\ol\ell) g\/ dV \\
&& \quad + m \int_{\cX_s} \left( \Box - m \right)^{-1}
(A_i\cup\psi^\ol\ell)\cdot (A_\ol\jmath \cup \psi^k) g\/ dV.
\nonumber
\end{eqnarray}
The only contribution in \eqref{eq:curvgen}, which may be negative,
originates from the harmonic parts in the third term. It equals
$$
- \int_{\cX_s} H(A_i\cup\psi^\ol\ell) \ol{H(A_j\cup\psi^\ol k)} g dV.
$$
\end{theorem}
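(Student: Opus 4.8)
The plan is to compute the curvature of the direct image sheaf $R^{n-p}f_*\Omega^p_{\cX/S}(\cK_{\cX/S}^{\otimes m})$ directly from the definition of the Chern connection applied to the $L^2$-metric, following the Kodaira-Spencer/Siu-type variational scheme. First I would represent a local holomorphic frame of the direct image by $\ol\pt$-closed $(0,n-p)$-forms $\psi^k$ on $\cX$ with values in $\Omega^p_{\cX/S}(\cK_{\cX/S}^{\otimes m})$ whose fiber restrictions are the harmonic representatives (Lemma~\ref{le:dolb}). The metric tensor is $h^{k\ol\ell}(s)=\int_{\cX_s}\psi^k\cdot\psi^\ol\ell\, g\,dV$, and the curvature is obtained as $R_{i\ol\jmath}^{\phantom{i\ol\jmath}\ol\ell k}=-\pt_{\ol\jmath}\pt_i h^{k\ol\ell}+(\text{terms with }h^{-1}\pt h\,\pt h)$, where the first-order terms will, after choosing the frame appropriately at the point, drop out of the curvature (they contribute the $\|Hessian\|$-type cancellation). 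Concretely I would apply Lemma~\ref{le:intLie} twice: $\pt_i\int_{\cX_s}\psi^k\cdot\psi^\ol\ell\,g\,dV=\int_{\cX_s}L_{v_i}(\psi^k\cdot\psi^\ol\ell\,g\,dV)$ for the canonical (horizontal) lift $v_i=\pt_{s_i}+a_i^\alpha\pt_\alpha$ of $\pt/\pt s_i$, and then differentiate again in the $\ol\jmath$-direction. The Lie derivative distributes across the pointwise inner product, the hermitian metric on $\cK_{\cX/S}^{\otimes m}$ (whose curvature is $-m\omega_\cX$ by the \ke equation \eqref{eq:ke}), and the volume form $g\,dV$; the key structural input is that $L_{v_i}\psi^k$ has a component governed by the cup product $A_i\cup\psi^k$ (this is exactly the Kodaira-Spencer action, $\ol\pt(L_{v_i}\psi^k)=\pm A_i\cup\psi^k$ since $A_i=\ol\pt v_i|\cX_s$), and the $\pt$-primitive / harmonic-projection decomposition of these non-harmonic pieces is what produces the resolvent operators.

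The three operators $(\Box+1)^{-1}$, $(\Box+m)^{-1}$, $(\Box-m)^{-1}$ should emerge as follows. The $(\Box+1)^{-1}$ term comes from the function $\varphi=\langle v,v\rangle_{\omega_\cX}$ entering through the variation of the volume form $g\,dV$ together with the metric on $\cK_{\cX/S}^{\otimes m}$; by Proposition~\ref{pr:elleq}, $(\Box+1)\varphi=\|A_i\|^2$ in the diagonal case, so $\varphi$ is replaced by $(\Box+1)^{-1}(A_i\cdot A_\ol\jmath)$ after polarization, which multiplies the pointwise norm $\psi^k\cdot\psi^\ol\ell$, giving the first term with factor $m$ from the curvature of $\cK_{\cX/S}^{\otimes m}$. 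The $(\Box\pm m)^{-1}$ terms come from solving for the "potential" of the non-harmonic $(0,n-p\pm1)$-forms $A_i\cup\psi^k$ and $A_i\cup\psi^\ol\ell$: because these take values in $\Omega^{p\mp1}_{\cX/S}(\cK_{\cX/S}^{\otimes m})$ and $\cK_{\cX/S}^{\otimes m}$ carries curvature $-m\omega_\cX$, the relevant Bochner-Kodaira identity shifts the Laplacian by $\pm m$; solving $(\Box\pm m)(\text{cochain})=A_i\cup\psi^k$ and integrating by parts against the conjugate term produces the middle and third integrals. The asymmetry in sign ($+m$ versus $-m$) tracks whether the cup product raises or lowers the form degree, i.e.\ whether one contracts with $A_i$ using $dz^{\ol\beta}$ or with its conjugate structure; for $p=n$ the target $\Omega^{p+1}=\Omega^{n+1}=0$ kills the $(\Box-m)^{-1}$ term entirely, which is the remark after the Main Theorem.

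For the final sentence of the theorem — that the only possibly-negative contribution is $-\int_{\cX_s}H(A_i\cup\psi^\ol\ell)\,\ol{H(A_j\cup\psi^\ol k)}\,g\,dV$, where $H$ denotes harmonic projection — I would argue as follows. The operators $(\Box+1)^{-1}$ and $(\Box+m)^{-1}$ are positive (their eigenvalues $1/(1+\lambda_\nu)$ and $1/(m+\lambda_\nu)$ are all positive since $\lambda_\nu\ge0$ and $m>0$), so the first two integrals are manifestly $\ge0$ as pairings of the form $\int (\text{op})(u\cdot\bar u')$ — after polarizing to a positive-definite Hermitian form in the frame and cochain indices. The third operator $(\Box-m)^{-1}$ is positive on the orthogonal complement of the kernel of $(\Box-m)$, i.e.\ on eigenfunctions with $\lambda_\nu>m$ (there $1/(\lambda_\nu-m)>0$) and also on those with $0\le\lambda_\nu<m$? — no: there $1/(\lambda_\nu-m)<0$. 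The correct splitting is by the sign of $\lambda_\nu-m$; but the genuinely dangerous part is the \emph{kernel}, i.e.\ the harmonic ($\lambda_\nu=0$) component when... Here I should instead note that the cup products $A_i\cup\psi^\ol\ell$ decompose into harmonic part $H(A_i\cup\psi^\ol\ell)$ (killed by $\Box$) plus the rest; on the harmonic part $(\Box-m)^{-1}$ acts as $-1/m$, contributing exactly $-\,m\cdot\frac1m\int H(A_i\cup\psi^\ol\ell)\cdot\ol{H(A_j\cup\psi^\ol k)}\,g\,dV=-\int H(A_i\cup\psi^\ol\ell)\,\ol{H(A_j\cup\psi^\ol k)}\,g\,dV$, while on the non-harmonic complement $(\Box-m)^{-1}$ is bounded and — crucially — its contribution combines with the cross-terms to remain non-negative. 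The main obstacle is precisely making this last point rigorous: showing that after one reorganizes the full curvature expression the non-harmonic part of the third term is absorbed positively (or cancels), so that only the harmonic residue survives with its sign; this requires a careful Bochner-type integration by parts on the fibers, exploiting that $(\Box-m)^{-1}$ on the orthogonal complement of $\ker(\Box-m)$ is dominated by the other positive terms via the Kodaira-Nakano inequality for $\Omega^{p+1}_{\cX/S}(\cK_{\cX/S}^{\otimes m})$. I expect the bookkeeping of which Laplacian ($\Box$ on functions versus on bundle-valued forms) appears where, together with the sign conventions for cup product and contraction, to be the genuinely delicate part; the rest is a (long but routine) application of Lemmas~\ref{le:intLie}, \ref{le:dolb} and Proposition~\ref{pr:elleq}.
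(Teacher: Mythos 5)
Your overall scheme is the same as the paper's: represent the sections by $\ol\pt$-closed forms as in Lemma~\ref{le:dolb}, differentiate the $L^2$-metric twice via Lie derivatives along horizontal lifts (Lemma~\ref{le:intLie}), use a normal frame so the first-order terms drop, let the commutator term produce $\varphi$ and convert it by Proposition~\ref{pr:elleq} into the $(\Box+1)^{-1}$ term, and let the cup products $A\cup\psi$ with the curvature $-m\omega_\cX$ of $\cK_{\cX/S}^{\otimes m}$ account for the $(\Box\pm m)^{-1}$ terms. (Two small inaccuracies in the setup: the first term does not come from the variation of $g\,dV$ --- by Lemma~\ref{le:Lieder} one has $L_v(g\,dV)=0$ --- but from the commutator $L_{[\ol v,v]}$ acting on the $\cK_{\cX/S}^{\otimes m}$-factor, which contributes $-m\varphi\cdot id$ as in Lemma~\ref{le:vvb}; and the identity you quote, $\ol\pt(L_{v}\psi^k)=\pm A\cup\psi^k$, cannot be right for degree reasons --- the correct statements are $(L_v\psi^k)''=A_s\cup\psi^k$ and $\ol\pt(L_v\psi^k)'=\pt(A_s\cup\psi^k)$, together with $\ol\pt^*(L_v\psi^k)'=0$, $\pt^*(A_s\cup\psi^k)=0$ and their conjugate analogues, i.e.\ Proposition~\ref{pr:baseq}, which is the technical core of the computation and which your plan does not supply.)

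The genuine gap is your treatment of the third term. You correctly extract the harmonic residue $-\int H(A_i\cup\psi^{\ol\ell})\,\ol{H(A_j\cup\psi^{\ol k})}\,g\,dV$, but you leave open what happens on the non-harmonic part of $A_{\ol\jmath}\cup\psi^k$, conceding that eigenvalues in $(0,m)$ might make $(\Box-m)^{-1}$ negative there and proposing that such contributions be ``absorbed'' by the other positive terms. That is not how the proof goes, and without closing this point the formula is not even well-defined: if $m$ were an eigenvalue on the relevant subspace, $(\Box-m)^{-1}$ would not exist. The paper proves a sharp spectral claim: writing $A_{\ol s}\cup\psi^k=\sum_\nu\rho_\nu$ in eigencomponents, every non-zero eigenvalue is strictly larger than $m$. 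This follows by applying $\pt^*$ and using Lemma~\ref{le:boxdboxdb} ($\Box_\pt=\Box_{\ol\pt}+m(n-p-q)\,id$, a consequence of the curvature $-m\omega_\cX$), so that on $(p,n-p-1)$-forms the $\Box_{\ol\pt}$-eigenvalues are $\lambda_\nu-m\geq 0$; and $\lambda_\nu-m=0$ is excluded because $\pt^*(A_{\ol s}\cup\psi^k)=\ol\pt^*(L_{\ol v}\psi^k)'$ (equation \eqref{eq:6}) is orthogonal to the $\ol\pt$-harmonic space. Hence $(\Box-m)^{-1}$ is positive on the orthogonal complement of the harmonic part, no domination or cancellation against the other two terms is needed, and the only possibly negative contribution is exactly the harmonic one. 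Supplying this claim (and the identities of Proposition~\ref{pr:baseq} on which it rests) is what your proposal is missing.
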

Concerning the third term, the theorem contains the fact that the positive
eigenvalues of the Laplacian are larger than $m$.

Now the pointwise estimate \eqref{eq:hker2} of the resolvent kernel (cf.\
also Proposition~\ref{pr:resol}) translates into an estimate.
\begin{proposition}\label{pr:est1}
Let $f:\cX \to S$ be a family of canonically polarized manifolds, and $s
\in S$. Let a tangent vector of $S$ at $s$ be given by a harmonic \ks form
$A$ and let $\psi$ be a harmonic $(p,n-p)$-form on $\cX_s$ with values in
the $m$-canonical bundle. Then
\begin{equation}\label{eq:est0}
R(A,\ol A, \psi,\ol\psi) \geq P_n(d(\cX_s)) \cdot \|A\|^2\cdot
\|\psi\|^2 - \|H(A\cup \ol\psi)\|^2.
\end{equation}
\end{proposition}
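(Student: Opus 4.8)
The plan is to specialize the curvature formula of Theorem~\ref{th:curvgen} to the diagonal, inserting the harmonic \ks form $A$ into the $i$- and $\ol\jmath$-slots and the harmonic section $\psi$ into the $k$- and $\ol\ell$-slots, so that
\[
R(A,\ol A,\psi,\ol\psi)= m\int_{\cX_s}(\Box+1)^{-1}(A\cdot\ol A)\,(\psi\cdot\ol\psi)\,g\,dV + m\int_{\cX_s}(\Box+m)^{-1}(A\cup\psi)\cdot\ol{(A\cup\psi)}\,g\,dV + m\int_{\cX_s}(\Box-m)^{-1}(A\cup\ol\psi)\cdot\ol{(A\cup\ol\psi)}\,g\,dV ,
\]
and then to bound the three summands separately: the first from below by $P_n(d(\cX_s))\,\|A\|^2\|\psi\|^2$, the second by $0$, and the third by $-\|H(A\cup\ol\psi)\|^2$.

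For the first term, $A\cdot\ol A$ is the pointwise squared norm of the harmonic \ks form, hence a non-negative continuous function on $\cX_s$. Thus Proposition~\ref{pr:resol} --- equivalently the kernel estimate \eqref{eq:hker2} --- gives $(\Box+1)^{-1}(A\cdot\ol A)(z)\geq P_n(d(\cX_s))\int_{\cX_s}(A\cdot\ol A)\,g\,dV= P_n(d(\cX_s))\,\|A\|^2$ for every $z\in\cX_s$; multiplying by $\psi\cdot\ol\psi\geq 0$ and integrating over $\cX_s$ yields $m\int_{\cX_s}(\Box+1)^{-1}(A\cdot\ol A)(\psi\cdot\ol\psi)\,g\,dV\geq m\,P_n(d(\cX_s))\,\|A\|^2\|\psi\|^2\geq P_n(d(\cX_s))\,\|A\|^2\|\psi\|^2$, the last step since $m\ge 1$. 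For the second term, $\Box\ge 0$ and $m>0$ make $\Box+m$ a positive invertible self-adjoint operator on the relevant bundle-valued forms on $\cX_s$, so the integral equals $\langle(\Box+m)^{-1}(A\cup\psi),A\cup\psi\rangle_{L^2}\ge 0$ and only improves the bound.

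For the third term I would write $u:=A\cup\ol\psi$ and split it orthogonally into its harmonic part $H(u)$ and a remainder $u'$ orthogonal to the harmonic forms. Theorem~\ref{th:curvgen} includes the assertion that the non-zero eigenvalues of $\Box$ on these forms exceed $m$; hence $\Box-m$ equals $-m$ on the harmonic forms and is positive definite on their orthogonal complement, so $(\Box-m)^{-1}u'$ is well defined with $\langle(\Box-m)^{-1}u',u'\rangle\ge 0$, while $(\Box-m)^{-1}H(u)=-\tfrac1m H(u)$. Orthogonality then gives $\langle(\Box-m)^{-1}u,u\rangle=-\tfrac1m\|H(u)\|^2+\langle(\Box-m)^{-1}u',u'\rangle\ge -\tfrac1m\|H(u)\|^2$, whence the third term is $\ge -\|H(A\cup\ol\psi)\|^2$ --- exactly the explicit negative contribution already singled out in Theorem~\ref{th:curvgen}. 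Summing the three estimates gives \eqref{eq:est0}. The step I expect to be the real obstacle is precisely this handling of the indefinite third term: one must know that $(\Box-m)^{-1}$ genuinely makes sense on the orthogonal complement of the harmonic forms, i.e. the spectral gap $\lambda_1(\Box)>m$ in the bundle where $A\cup\ol\psi$ lives, and that the harmonic projection is compatible with the spectral decomposition so the cross terms vanish; both are supplied by Theorem~\ref{th:curvgen}, while the first two terms are a direct application of the positivity of the fiberwise resolvent from Proposition~\ref{pr:resol} together with self-adjointness of the fiber Laplacians.
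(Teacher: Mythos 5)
Your argument is correct and is essentially the paper's own (the paper states Proposition~\ref{pr:est1} as an immediate translation of the resolvent-kernel estimate \eqref{eq:hker2}/Proposition~\ref{pr:resol} applied to the first term of Theorem~\ref{th:curvgen}, with the second term non-negative and the negative part of the third term being exactly the harmonic contribution $-\|H(A\cup\ol\psi)\|^2$ singled out in that theorem). Your splitting of the third term via the spectral gap $\lambda>m$ and the harmonic projection, and the use of $m\geq 1$ to drop the factor $m$ in the first term, match the intended proof.
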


For $p=n$ we obtain the following result.
\begin{corollary}\label{co:curvmcan}
For $f_*\cK_{\cX/S}^{\otimes(m+1)}$ the curvature equals
\begin{eqnarray}\label{eq:curvmcan}
R_{i\ol\jmath}^{\phantom{i\ol\jmath}\ol\ell k}(s)&=& m \int_{\cX_s}
\left( \Box + m \right)^{-1} (A_i\cup\psi^k)
\cdot (A_\ol\jmath \cup \psi^\ol\ell) g\/ dV\nonumber\\ && \quad +
m \int_{\cX_s} \left( \Box + 1 \right)^{-1}(A_i\cdot A_\ol\jmath)
\cdot(\psi^k \cdot \psi^\ol\ell)
 g\/ dV.
\end{eqnarray}
\end{corollary}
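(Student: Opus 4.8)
The plan is to obtain Corollary~\ref{co:curvmcan} as an immediate specialization of Theorem~\ref{th:curvgen} to the case $p=n$, the only thing to check being that the third (potentially negative) term of \eqref{eq:curvgen} vanishes identically. First I would note that for $p=n$ the sheaf in question is $R^0f_*\Omega^n_{\cX/S}(\cK_{\cX/S}^{\otimes m})=f_*\cK_{\cX/S}^{\otimes(m+1)}$, and that by Lemma~\ref{le:dolb} a section $\psi^\ell$ is represented on each fiber $\cX_s$ by a harmonic element of $\cA^{0,0}(\cX_s,\Omega^n_{\cX_s}(\cK_{\cX_s}^{\otimes m}))$, i.e.\ a holomorphic $(m+1)$-canonical form; so the first two summands of \eqref{eq:curvgen} are precisely the two terms displayed in \eqref{eq:curvmcan}, written in the opposite order.

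The heart of the argument is a degree count for the cup products occurring in the third term, namely $A_i\cup\psi^{\ol\ell}$ and $A_{\ol\jmath}\cup\psi^k$. For $\psi^k\in\cA^{0,n-p}(\cX_s,\Omega^p_{\cX_s}(\cK_{\cX_s}^{\otimes m}))$, the product $A_{\ol\jmath}\cup\psi^k$ with $A_{\ol\jmath}=A^{\ol\beta}_{\ol\jmath\alpha}\pt_{\ol\beta}dz^\alpha$ is the operator \eqref{eq:cup2}: the $\pt_{\ol\beta}$ contracts an antiholomorphic form index of $\psi^k$ while $dz^\alpha$ is wedged onto its $\Omega^p$-part, so the result lies in $\cA^{0,n-p-1}(\cX_s,\Omega^{p+1}_{\cX_s}(\cK_{\cX_s}^{\otimes m}))$; conjugating, $A_i\cup\psi^{\ol\ell}=\ol{A_j\cup\psi^\ell}$ lies in the conjugate of the same space. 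When $p=n$ this target space is zero --- both because $\Omega^{n+1}_{\cX_s}=0$ on an $n$-dimensional fiber and because the antiholomorphic degree $n-p-1=-1$ is negative --- so $A_i\cup\psi^{\ol\ell}=0$ and $A_{\ol\jmath}\cup\psi^k=0$ identically. Consequently the third integrand of \eqref{eq:curvgen} vanishes pointwise, and with it the harmonic-part contribution $-\int_{\cX_s}H(A_i\cup\psi^{\ol\ell})\,\ol{H(A_j\cup\psi^{\ol k})}\,g\,dV$; in particular the resolvent $(\Box-m)^{-1}$ is never applied to a non-trivial form, so the subtlety about the positive eigenvalues of $\Box$ exceeding $m$ plays no role for $p=n$. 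Meanwhile the second term survives, since there the relevant cup product is the other operator \eqref{eq:cup1}, which \emph{lowers} the $\Omega$-degree to $\Omega^{n-1}_{\cX_s}\neq 0$, and the first term is present for every value of $p$. Substituting back into \eqref{eq:curvgen} then yields \eqref{eq:curvmcan}.

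The computation is routine once the bookkeeping is set up correctly, so there is no serious obstacle; the one point I would be careful about is the conjugation convention in $\psi^{\ol\ell}$ and the accompanying interchange of holomorphic versus antiholomorphic form indices, together with the observation that it is \emph{both} factors of the third term --- not just one factor inside a Hermitian pairing --- that vanish when $p=n$, which is exactly what forces the entire integrand (rather than merely a formal expression) to be zero.
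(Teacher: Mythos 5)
Your proof is correct and is essentially the paper's own argument: Corollary~\ref{co:curvmcan} is obtained by specializing Theorem~\ref{th:curvgen} to $p=n$, where the third term (and its harmonic part) is absent because the cup product \eqref{eq:cup2} and its conjugate land in $\cA^{0,n-p-1}(\cX_s,\Omega^{p+1}_{\cX_s}(\cK_{\cX_s}^{\otimes m}))=0$, so $A_{\ol\jmath}\cup\psi^k$ and $A_i\cup\psi^{\ol\ell}$ vanish identically while the first two terms of \eqref{eq:curvgen} give \eqref{eq:curvmcan}. The only blemish is the harmless index slip in writing $A_i\cup\psi^{\ol\ell}=\ol{A_j\cup\psi^\ell}$ (it should be $\ol{A_{\ol\imath}\cup\psi^{\ell}}$), which does not affect the degree count.
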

The first term in \eqref{eq:curvmcan} yields immediately Nakano
semipositivity, since the operator $(\Box +m)^{-1}$ is positive on the
respective tensors. In fact more can be shown for the curvature of the
direct image of relative $m$-canonical bundles.

Let
\begin{equation}\label{eq:inpro}
H^{\ol\ell k} = \int_{\cX_s} \psi^k \cdot \psi^\ol\ell g\, dV.
\end{equation}
\begin{corollary}\label{co:curv1}
Let $s\in S$ be any point. Let $\xi^i_k\in \C$. Then
\begin{equation}
R_{i\ol\jmath}^{\phantom{i\ol\jmath}\ol\ell k}(s) \xi^i_k\ol{\xi^j_\ell}
\geq m \cdot P_n(d(\cX_s) )\cdot G_{i\ol\jmath}^{WP}\cdot H^{\ol\ell k}
\cdot \xi^i_k\ol{\xi^j_\ell}.
\end{equation}
In particular the curvature is strictly Nakano-positive with the above
estimate.
\end{corollary}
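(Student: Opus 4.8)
The plan is to keep only the second term of the curvature formula \eqref{eq:curvmcan}, the first one being manifestly non-negative, and to extract the estimate from it by writing the resolvent $(\Box+1)^{-1}$ through its positive integral kernel. First I would contract \eqref{eq:curvmcan} with $\xi^i_k\ol{\xi^j_\ell}$. The first term becomes $m\int_{\cX_s}\big((\Box+m)^{-1}\eta\big)\cdot\ol\eta\, g\, dV$ with $\eta:=\sum_{i,k}\xi^i_k\,(A_i\cup\psi^k)$; since $\Box$ has non-negative eigenvalues, $(\Box+m)^{-1}$ is a positive self-adjoint operator, so this term is $\geq 0$ and may be discarded for a lower bound — this is precisely the Nakano semipositivity already observed after \eqref{eq:curvmcan}.

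For the second term I would invoke Lemma~\ref{le:heat} and Proposition~\ref{pr:resol}: one has $(\Box+1)^{-1}\chi(z)=\int_{\cX_s}P(z,w)\,\chi(w)\,g(w)\,dV_w$ with a real kernel satisfying $P(z,w)\geq P_n(d(\cX_s))>0$ everywhere by \eqref{eq:hker2}. Taking $\chi=A_i\cdot A_\ol\jmath$ and contracting with $\xi^i_k\ol{\xi^j_\ell}$, the second term of \eqref{eq:curvmcan} becomes the double fiber integral
$$
m\int_{\cX_s}\!\int_{\cX_s} P(z,w)\Big(\sum_{i,j,k,\ell}(A_i\cdot A_\ol\jmath)(w)\,(\psi^k\cdot\psi^\ol\ell)(z)\,\xi^i_k\ol{\xi^j_\ell}\Big)\,g(w)\,g(z)\,dV_w\,dV_z.
$$
For fixed $z$ and $w$, the bracketed quantity is the squared pointwise norm $\big\|\sum_{i,k}\xi^i_k\,A_i(w)\otimes\psi^k(z)\big\|^2$ of an element of the pointwise tensor product of the two hermitian bundles in play, hence non-negative; so the pointwise bound $P(z,w)\geq P_n(d(\cX_s))$ may be pulled out from under the integral. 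The remaining double integral factors, by Fubini, into $\big(\int_{\cX_s}A_i\cdot A_\ol\jmath\, g\, dV\big)\big(\int_{\cX_s}\psi^k\cdot\psi^\ol\ell\, g\, dV\big)=G_{i\ol\jmath}^{WP}\,H^{\ol\ell k}$ (keeping $\xi^i_k\ol{\xi^j_\ell}$ along), by Definition~\ref{de:wpherm} and \eqref{eq:inpro}. Thus the second term of \eqref{eq:curvmcan} is $\geq m\,P_n(d(\cX_s))\,G_{i\ol\jmath}^{WP}H^{\ol\ell k}\xi^i_k\ol{\xi^j_\ell}$, and adding back the non-negative first term yields the asserted estimate.

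It then remains to deduce strict Nakano-positivity. Here $P_n(d(\cX_s))>0$; the form $G_{i\ol\jmath}^{WP}$ is positive definite because the family is effectively parameterized, so the harmonic \ks forms $A_1,\dots,A_M$ are linearly independent; and $H^{\ol\ell k}$ is positive definite because $\psi^1,\dots,\psi^r$, being a basis of the direct image, have linearly independent harmonic representatives on $\cX_s$. The Hermitian form $\xi\mapsto G_{i\ol\jmath}^{WP}H^{\ol\ell k}\xi^i_k\ol{\xi^j_\ell}$ on $\C^M\otimes\C^r$ is the tensor product of these two positive definite forms, hence positive definite, so $R_{i\ol\jmath}^{\phantom{i\ol\jmath}\ol\ell k}\xi^i_k\ol{\xi^j_\ell}>0$ whenever $\xi\neq 0$. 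I expect the only point needing genuine care to be the identification, after contraction, of the integrand as a true squared norm in the tensor product bundle; once that is in hand, the argument reduces to the positivity of the resolvent kernel (obtained via \cite{c-y}) and to elementary linear algebra, so I foresee no serious obstacle.
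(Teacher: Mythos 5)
Your proof is correct and follows essentially the route the paper intends: the corollary is stated there as an immediate consequence of \eqref{eq:curvmcan}, dropping the Nakano-semipositive $(\Box+m)^{-1}$ term and bounding the $(\Box+1)^{-1}$ term via the resolvent-kernel estimate $P(z,w)\geq P_n(d(\cX_s))$ of \eqref{eq:hker2}, which then factors into $G^{WP}_{i\ol\jmath}H^{\ol\ell k}$. Your explicit identification of the contracted integrand as the squared norm of $\sum\xi^i_k A_i(w)\otimes\psi^k(z)$ is exactly the detail needed to pass from the single-vector estimate of Proposition~\ref{pr:est1} to the full Nakano contraction, and the concluding linear-algebra step (positive definiteness of the tensor product of $G^{WP}$ and $H$, using effective parameterization) is sound.
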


Next, we set $m=1$ and take a dual basis $\{\nu_i\}\subset R^p
f_*\Lambda^p\cT_{\cX/S}(S)$ of the $\{\psi^k\}$ and normal coordinates at
a given point $s_0\in S$. Observing  that the role of conjugate and
non-conjugate tensors is being switched, we compute the curvature as
follows.
\begin{theorem}\label{th:curvgendual}
The curvature of $R^pf_*\Lambda^p\cT_{\cX/S}$ equals
\begin{eqnarray}\label{eq:curvgendual}
R_{i\ol\jmath   k \ol\ell }(s)&=&- \int_{\cX_s}
\left( \Box + 1 \right)^{-1}(A_i\cdot A_\ol\jmath)
\cdot(\nu_k \cdot \nu_\ol\ell) g\/ dV\nonumber\\
&& \quad - \int_{\cX_s} \left( \Box + 1 \right)^{-1} (A_i\wedge\nu_\ol\ell)
\cdot (A_\ol\jmath \wedge \nu_k) g\/ dV \\
&& \quad -  \int_{\cX_s} \left( \Box - 1 \right)^{-1}
(A_i\wedge \nu_k)\cdot (A_\ol\jmath \wedge \nu_\ol\ell) g\/ dV.
\nonumber
\end{eqnarray}
The only possible positive contribution arises from
$$
\int_{\cX_s}H(A_i\wedge \nu_k) H(A_\ol\jmath \wedge \nu_\ol\ell) g\/ dV.
$$
\end{theorem}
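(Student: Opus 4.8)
The plan is to \emph{derive Theorem~\ref{th:curvgendual} from Theorem~\ref{th:curvgen} by relative Serre duality}, specialized to $m=1$. Fiberwise, Serre duality identifies $H^p(\cX_s,\Lambda^p\cT_{\cX_s})$ with the dual of $H^{n-p}(\cX_s,\Omega^p_{\cX_s}(\cK_{\cX_s}))$, since $(\Lambda^p\cT_{\cX_s})^\vee=\Omega^p_{\cX_s}$ and $\Omega^p_{\cX_s}\otimes\cK_{\cX_s}=\Omega^p_{\cX_s}(\cK_{\cX_s}^{\otimes 1})$. Under the standing local-freeness and vanishing hypotheses, the Grothendieck--Grauert comparison of Section~\ref{ss:dolb} upgrades this to a perfect $\cO_S$-bilinear pairing of holomorphic vector bundles $R^pf_*\Lambda^p\cT_{\cX/S}\otimes_{\cO_S} R^{n-p}f_*(\Omega^p_{\cX/S}\otimes\cK_{\cX/S})\to\cO_S$, realized — as explained there — by the fiber integral of the wedge product of $\db$-closed representatives of the kind produced by Lemma~\ref{le:dolb}. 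Hence $R^pf_*\Lambda^p\cT_{\cX/S}$ is, holomorphically, the dual bundle $\cF^\vee$ of $\cF:=R^{n-p}f_*\Omega^p_{\cX/S}(\cK_{\cX/S})$.

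First I would check that the two $L^2$-metrics are \emph{dual} to one another under this pairing. Both are defined via harmonic representatives with respect to the \ke metric on the fibers. The conjugate-linear Hodge operator $\ol *$ gives a fibrewise isomorphism between the harmonic $\Lambda^p\cT_{\cX_s}$-valued $(0,p)$-forms and the harmonic $\Omega^p_{\cX_s}(\cK_{\cX_s})$-valued $(0,n-p)$-forms, and it intertwines the Serre pairing with the $L^2$ inner product. Concretely, taking the dual basis $\{\nu_i\}\subset R^pf_*\Lambda^p\cT_{\cX/S}(S)$ of the chosen $\{\psi^k\}$, the hermitian Gram matrix of the $\{\nu_i\}$ in the $L^2$-metric is the inverse transpose of that of the $\{\psi^k\}$; this is the precise content of ``the role of conjugate and non-conjugate tensors is being switched''. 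I would also record here that, under $\ol *$, the contraction-cup product $A\cup\psi$ of the harmonic \ks form with an $\Omega^p(\cK)$-valued form corresponds to the exterior product $A\wedge\nu$ of $\cT_{\cX_s}$-valued forms, the contraction against $\Omega^p$ being turned into a wedge into $\Lambda^p\cT$.

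Then the formula follows from the general identity $R^{\cF^\vee}=-\,{}^tR^{\cF}$ for Chern curvatures of a hermitian holomorphic bundle and its dual (the dual connection being defined through the tautological pairing), combined with the metric identification above. Translating the index placement in \eqref{eq:curvgen} — where the $\cF$-curvature carries two upper bundle indices $\ol\ell,k$, one conjugated — lowering them on the dual side produces the overall minus sign, replaces each $\psi$ by the corresponding $\nu$, and, because the duality pairing interchanges the two bundle slots, swaps the bundle indices $k$ and $\ol\ell$ in exactly the two terms carrying a cup/wedge factor, while the first term stays symmetric. Putting $m=1$ turns $\Box+m,\ \Box-m$ into $\Box+1,\ \Box-1$, and the three integrals become precisely \eqref{eq:curvgendual}; choosing normal coordinates at $s_0$ removes the remaining connection terms in the curvature computation. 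The closing remark is then immediate: decomposing $(\Box-1)^{-1}$ on the harmonic ($\lambda=0$) part gives $-H$, so the third term contributes $+\int_{\cX_s}H(A_i\wedge\nu_k)\,H(A_\ol\jmath\wedge\nu_\ol\ell)\,g\,dV$, the only possibly positive term.

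I expect the main obstacle to be precisely the bookkeeping of \textbf{conjugations and index positions} in the middle step: verifying rigorously that the $L^2$-metric on $R^pf_*\Lambda^p\cT_{\cX/S}$ is the dual metric (compatibility of harmonic projection and $\ol *$ with the Serre pairing), and confirming that this, together with $R^{\cF^\vee}=-\,{}^tR^{\cF}$ in the chosen conventions, yields exactly the displayed $-1$ factors and the $k\leftrightarrow\ol\ell$ transposition in the last two terms and nowhere else. The rest is formal.
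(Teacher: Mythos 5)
Your proposal is correct and follows essentially the same route as the paper: the paper obtains Theorem~\ref{th:curvgendual} from Theorem~\ref{th:curvgen} precisely by setting $m=1$, passing to the dual basis $\{\nu_i\}$ of the $\{\psi^k\}$ via the relative Serre duality pairing of Section~\ref{ss:dolb}, using normal coordinates at $s_0$, and ``switching the roles of conjugate and non-conjugate tensors,'' which is exactly your dual-metric identification together with $R^{\cF^\vee}=-\,{}^tR^{\cF}$; your reading of the final remark via the harmonic part of $(\Box-1)^{-1}$ also matches the paper. You in fact spell out the duality bookkeeping (Hodge star, cup versus wedge, $k\leftrightarrow\ol\ell$ transposition) in more detail than the paper does.
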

We observe that for $n=1$ the third term in \eqref{eq:curvgendual} is not
present and we have the formula for the classical \wp metric on \tei
space: It is known from the results of Wolpert that the classical \wp
metric for families of Riemann surfaces of genus larger than one has
negative curvature: According to \cite{wo} the sectional curvature is
negative, and the holomorphic sectional curvature is bounded from above by
a negative constant. A stronger curvature property, which is related to
strong rigidity, was shown in \cite{sch:teich}. The strongest result on
curvature by Liu, Sun, and Yau now follows immediately from
Corollary~\ref{co:curvmcan}:
\begin{corollary}[\cite{yau}]
The \wp metric on the \tei space of Riemann surfaces of genus $p>1$ is
dual Nakano negative.
\end{corollary}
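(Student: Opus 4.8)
The plan is to deduce this directly from Corollary~\ref{co:curvmcan} by identifying the classical Weil–Petersson metric on Teichm\"uller space with the metric on the direct image sheaf $f_*\cK_{\cX/S}^{\otimes 2}$, i.e.\ the case $n=1$, $m=1$ of the curvature formula. First I would recall that for a family of compact Riemann surfaces of genus $p>1$, the relative canonical bundle $\cK_{\cX/S}$ restricts on each fiber to the canonical bundle $K_{\cX_s}$, which is ample, and $f_*\cK_{\cX/S}^{\otimes 2} = f_*\Omega^1_{\cX/S}(\cK_{\cX/S})$ is locally free of rank $3p-3 = \dim_{\C} T_sS$; its fiber at $s$ is $H^0(\cX_s, K_{\cX_s}^{\otimes 2})$, the space of holomorphic quadratic differentials, which is Serre-dual to $H^1(\cX_s,\cT_{\cX_s})$. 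Under this duality the $L^2$-metric on quadratic differentials induced by the K\"ahler–Einstein (hyperbolic) metric corresponds, up to the standard constant, to the classical Weil–Petersson metric on the tangent space, whose definition via harmonic Beltrami differentials $A_i$ is exactly \eqref{eq:wpherm} with $n=1$. So the dual bundle $R^pf_*\Lambda^p\cT_{\cX/S}$ in the case $p=1$, $n=1$ is precisely $\cT_S$ with the Weil–Petersson metric.

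Next I would invoke Corollary~\ref{co:curvmcan} with $n=1$, $m=1$: since the only index value on the fiber is a single coordinate, the wedge/cup pairing $A_i\cup\psi^k$ lands in $\cA^{0,0}(\cX_s,\cK_{\cX_s}^{\otimes 2})$, i.e.\ it is simply the pointwise product of the Beltrami differential with the quadratic differential, and there is no ``third'' (potentially negative) term — its absence for $n=1$ is already noted in the text after Theorem~\ref{th:curvgendual}. Thus the curvature of $f_*\cK_{\cX/S}^{\otimes 2}$ is
\[
R_{i\ol\jmath}^{\phantom{i\ol\jmath}\ol\ell k}(s) = \int_{\cX_s}(\Box+1)^{-1}(A_i\cup\psi^k)\cdot(A_\ol\jmath\cup\psi^\ol\ell)\, g\,dV + \int_{\cX_s}(\Box+1)^{-1}(A_i\cdot A_\ol\jmath)\cdot(\psi^k\cdot\psi^\ol\ell)\, g\,dV,
\]
a sum of two manifestly Nakano-positive terms because $(\Box+1)^{-1}$ is a positive operator on functions and on the relevant tensors (this positivity is exactly Proposition~\ref{pr:resol} / the discussion of the positive resolvent kernel). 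Dualizing a Nakano-positive bundle gives a dual Nakano-negative bundle; equivalently, passing to $R^pf_*\Lambda^p\cT_{\cX/S}=\cT_S$ via Theorem~\ref{th:curvgendual} (again with $n=1$, so only the first two, sign-reversed, terms survive) exhibits the curvature of the Weil–Petersson metric as the negative of a Nakano-positive quantity, which is the definition of dual Nakano negativity.

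The main obstacle, and the only point requiring care, is the bookkeeping that identifies the abstract $L^2$-metric on $f_*\cK_{\cX/S}^{\otimes 2}$ with the \emph{classical} Weil–Petersson metric with its conventional normalization, and correspondingly that $R^1f_*\Lambda^1\cT_{\cX/S}$ with its induced metric is genuinely $(\cT_S,\omega^{WP})$ rather than merely a metric in its conformal/quasi-isometry class — this is where the Serre-duality pairing, the harmonicity of the Beltrami differentials (Proposition~\ref{pr:harmrep}), and the precise constants in the K\"ahler–Einstein normalization ${\rm Ric}=-\omega_X$ all have to be matched against \eqref{eq:wpherm}. Once that dictionary is in place, dual Nakano negativity is immediate from the positivity of the two integral terms, with no further estimate needed; in particular one does not even need the sharp lower bound $P_n(d(\cX_s))$ of Proposition~\ref{pr:est1}, only the qualitative positivity of the resolvent.
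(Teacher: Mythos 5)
Your proposal is correct and follows essentially the same route as the paper: identify the classical Weil--Petersson metric on $R^1f_*\cT_{\cX/S}\cong\cT_S$ as the dual of the $L^2$-metric on $f_*(\cK_{\cX/S}^{\otimes 2})$ and dualize the Nakano positivity coming from the curvature formula \eqref{eq:curvmcan}. The only cosmetic difference is that the paper quotes Corollary~\ref{co:curv1} (which packages strict Nakano positivity with the resolvent-kernel lower bound $P_n(d(\cX_s))$), whereas you argue the positivity of the two terms directly from the positivity of the resolvent; both are fine.
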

\begin{proof}
Observe that for a universal family $f:\cX\to S$ the classical \wp metric
on $R^1f_* \cT_{\cX/S}$ corresponds to the $L^2$ metric on its dual bundle
$f_*(\cK_{\cX/S}^{\otimes2})$, which is Nakano positive according to
Corollary~\ref{co:curv1}.
\end{proof}

For $p=1$ we obtain the curvature for the generalized \wp metric from
\cite{sch:curv}, (cf.\ \cite{siu:canlift}). Again we can estimate the
curvature like in Proposition~\ref{pr:est1}.

The following case is of particular interest.

\begin{proposition}\label{pr:est2}
Let $f:\cX \to S$ be a family of canonically polarized manifolds and $s
\in S$. Let a tangent vectors of $S$ at $s$ be given by harmonic \ks forms
$A, A_1,\ldots, A_p$  on $\cX_s$. Let $R$ denote the curvature tensor for
$R^pf_*\Lambda^p\cT_{\cX/S}$. Then we have in terms of the \wp norms:
\begin{gather}\label{eq:est1}
R(A,\ol A, H(A_1\wedge \ldots \wedge A_p) ,\ol{H(A_1\wedge \ldots \wedge
A_p)}) \leq  \nonumber \hspace{10cm} \\ \qquad - P_n(d(\cX_s)) \cdot
\|A\|^2\cdot \| H(A_1\wedge \ldots \wedge A_p)  \|^2 + \|H(A\wedge
A_1\wedge \ldots \wedge A_p)\|^2.
\end{gather}
\end{proposition}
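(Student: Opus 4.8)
The plan is to derive \eqref{eq:est1} from the curvature formula \eqref{eq:curvgendual} of Theorem~\ref{th:curvgendual} by specializing $\nu_k$ to the harmonic projection $H(A_1\wedge\cdots\wedge A_p)$ and applying the resolvent estimate. First I would substitute $\nu_k = \nu_{\ol\ell} = H(A_1\wedge\cdots\wedge A_p) =: \nu$ and $A_i = A_{\ol\jmath} = A$ into \eqref{eq:curvgendual}, so that the left-hand side becomes $R(A,\ol A,\nu,\ol\nu)$. The three terms on the right then read
$$
-\int_{\cX_s}(\Box+1)^{-1}(|A|^2)\cdot|\nu|^2\, g\,dV
\;-\;\int_{\cX_s}(\Box+1)^{-1}(|A\wedge\nu|^2)\, g\,dV
\;-\;\int_{\cX_s}(\Box-1)^{-1}(A\wedge\nu)\cdot\ol{(A\wedge\nu)}\, g\,dV,
$$
where in the last term I use that $A\wedge\nu_k$ and $A\wedge\nu_{\ol\ell}$ now coincide. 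By Theorem~\ref{th:curvgendual} the only possibly positive contribution is the harmonic part of this third term, namely $\int_{\cX_s}|H(A\wedge\nu)|^2 g\,dV$; the complementary (orthogonal) part of the third term, together with the first two terms, is non-positive.

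The key step is to bound the two genuinely negative contributions from above by $-P_n(d(\cX_s))\|A\|^2\|\nu\|^2$. For the first term I would apply Proposition~\ref{pr:resol} with $\chi = |A|^2\geq 0$ (which via \eqref{eq:phiA} is exactly the data $\|A_s\|^2$), obtaining the pointwise lower bound $(\Box+1)^{-1}(|A|^2)(z)\geq P_n(d(\cX_s))\int_{\cX_s}|A|^2 g\,dV = P_n(d(\cX_s))\|A\|^2$. Integrating against $|\nu|^2 g\,dV$ gives
$$
\int_{\cX_s}(\Box+1)^{-1}(|A|^2)\cdot|\nu|^2\, g\,dV \;\geq\; P_n(d(\cX_s))\,\|A\|^2\,\|\nu\|^2,
$$
so this term contributes $\leq -P_n(d(\cX_s))\|A\|^2\|\nu\|^2$. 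The second term $-\int(\Box+1)^{-1}(|A\wedge\nu|^2)g\,dV$ is $\leq 0$ since $(\Box+1)^{-1}$ is positive on the non-negative function $|A\wedge\nu|^2$ (positivity of the resolvent kernel, cf.\ the discussion before Lemma~\ref{le:heat}), and may simply be discarded. Finally, writing the third term as its harmonic part plus an orthogonal part, the orthogonal part is $\leq 0$ by the argument behind Theorem~\ref{th:curvgendual} (the positive eigenvalues of $\Box$ on the relevant tensors exceed $1$, making $(\Box-1)^{-1}$ positive there), and the harmonic part contributes exactly $+\|H(A\wedge\nu)\|^2 = +\|H(A\wedge A_1\wedge\cdots\wedge A_p)\|^2$. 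Collecting the three estimates yields \eqref{eq:est1}.

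The main obstacle I anticipate is the bookkeeping in the third term: one must justify splitting $(\Box-1)^{-1}$ acting on $(A\wedge\nu)\cdot\ol{(A\wedge\nu)}$ into the harmonic contribution and a manifestly non-positive remainder, which relies on the spectral fact (asserted in Theorem~\ref{th:curvgendual}) that on $\Lambda^{p+1}$-valued tensors of the relevant type the nonzero spectrum of $\Box$ lies in $(1,\infty)$; this is precisely what guarantees $(\Box-1)^{-1}$ is well-defined and positive off the harmonic space. A secondary subtlety is that $\nu = H(A_1\wedge\cdots\wedge A_p)$ is indeed a legitimate section of $R^pf_*\Lambda^p\cT_{\cX/S}$ to which the curvature form applies — this is immediate since harmonic projections of wedge products of harmonic \ks forms represent classes in $H^p(\cX_s,\Lambda^p\cT_{\cX_s})$ — and that the identification of $\|A\|^2$ with the quantity appearing in Proposition~\ref{pr:resol} via Proposition~\ref{pr:elleq} is exactly the one used in Proposition~\ref{pr:est1}. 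Everything else is routine once these points are in place.
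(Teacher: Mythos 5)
Your argument is correct and follows essentially the same route as the paper: specialize Theorem~\ref{th:curvgendual} to $A_i=A_{\ol\jmath}=A$ and $\nu_k=\nu_{\ol\ell}=H(A_1\we\ldots\we A_p)=:\nu$, estimate the first term via Proposition~\ref{pr:resol} exactly as in Proposition~\ref{pr:est1}, discard the non-positive second term, and bound the third term by its harmonic contribution $\|H(A\we\nu)\|^2$. The one step you pass over in silence, the equality $\|H(A\we H(A_1\we\ldots\we A_p))\|^2=\|H(A\we A_1\we\ldots\we A_p)\|^2$, is precisely what the paper's proof consists of, and it does require an argument: since the $A_i$ are $\ol\pt$-closed, the difference $A_1\we\ldots\we A_p-H(A_1\we\ldots\we A_p)$ is $\ol\pt$-exact, and because $\ol\pt A=0$ one has $A\we\ol\pt\eta=\pm\,\ol\pt(A\we\eta)$, so wedging with $A$ preserves exactness and the two harmonic projections coincide. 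Two minor notational slips, neither of which affects the conclusion: in the second term of \eqref{eq:curvgendual} the resolvent acts on the tensor $A_i\we\nu_{\ol\ell}=A\we\ol\nu$, not on the scalar $|A\we\nu|^2$, so its non-positivity should be justified by the spectral positivity of $(\Box+1)^{-1}$ on such tensors (as the paper does when deducing Nakano positivity) rather than by positivity of the resolvent kernel on non-negative functions; and $\nu_k$ and $\nu_{\ol\ell}$ are conjugates of one another rather than equal --- what matters in the diagonal specialization is that each of the last two terms pairs a tensor with its own conjugate.
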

\begin{proof}
Since the $A$ and $A_i$ are $\ol\pt$-closed forms, we have $H(A\wedge
H(A_1\wedge \ldots\wedge A_p))=H(A\wedge A_1 \wedge \ldots \wedge A_p)$.
\end{proof}

Next, we define  higher \ks maps defined on the symmetric powers of the
tangent bundle of the base. For $p>0$ we let the morphism
\begin{equation}\label{eq:rhop}
\rho^p: S^p \cT_{S} \to R^pf_*\Lambda^p \cT_{\cX/S}
\end{equation}
send a symmetric power
$$
\frac{\pt}{\pt s^{i_1}}\otimes \ldots \otimes \frac{\pt}{\pt s^{i_p}}
$$
to the class of
$$
A_{i_1}\wedge \ldots \wedge A_{i_p}:=
A_{i_1\ol\beta_1}^{\alpha_1}\pt_{\alpha_1}dz^{\ol\beta_1}
\wedge \ldots \wedge A_{i_p\ol\beta_p}^{\alpha_p}\pt_{\alpha_p}dz^{\ol\beta_p}.
$$

\begin{definition}
Let the tangent vector $\pt/\pt s$ correspond to the harmonic \ks tensor
$A_s$. The generalized \wp function of degree $p$ on the tangent space is
\begin{gather*}
\| \pt/\pt s \|^{WP}_p  := \|A_s\|_p := \| \underbrace{A_s \wedge \ldots
\wedge A_s}_p \|^{1/p}\hspace{7cm} \\
:=  \left( \int_{\cX_s} H(A_s \wedge \ldots \wedge A_s)\cdot \ol{H(A_s
\wedge \ldots \wedge  A_s)} g \, dV \right) ^{1/2p}
\end{gather*}
\end{definition}

For the computation of the curvature, we assumed that the sheaves
$R^p\Lambda^p\cT_{\cX/S}$ are locally free.

Observe that the coefficient $P_n(d(\cX_s))$ in \eqref{eq:est1} remains
bounded as long as the fibers are smooth independent of the local freeness
of the direct image sheaves.

Given a family over a curve, the $p$-th \wp function of tangent vectors
defines a hermitian (pseudo)metric on the curve, which we denote by $G_p$.
\begin{lemma}\label{le:curvgp}
The curvature $K_{G_p}$ of $G_p$, at points with $G_p(s)\neq 0$ satisfies
\begin{eqnarray}\label{eq:curvgp}
&&\\
K_{G_p} &\leq& \frac{1}{p}\left( - \frac{1}{c_{p,n}} P_n(d(\cX_s)) + \max_{A\neq
0}\{({\|A\|_{p+1}}/{\|A\|_{p}})^{2p+2}\}\right)
\text{\quad for \quad} p< n \nonumber \\
&&\\
K_{G_p} &\leq& -\frac{1}{p \cdot c_{p,n}} P_n(d(\cX_s)) \text{\quad for \quad} p= n \text{, or if }\/ G_{p+1}\equiv 0
\nonumber
\end{eqnarray}
for some $c_{p,n}>0$. These estimates are uniform on any relatively
compact subset of the moduli space.
\end{lemma}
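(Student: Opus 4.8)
The plan is to compute the curvature of the hermitian (pseudo)metric $G_p$ directly from its definition and reduce everything to the curvature formula for $R^p f_*\Lambda^p \cT_{\cX/S}$ in Theorem~\ref{th:curvgendual}, together with the resolvent estimate \eqref{eq:hker2}. Fix a point $s$ with $G_p(s)\neq 0$ and choose a local holomorphic coordinate on the curve so that the tangent vector $\pt/\pt s$ corresponds to a harmonic \ks tensor $A=A_s$. By definition $G_p(s) = \|A\wedge\cdots\wedge A\|^{2/p} = \big(\int_{\cX_s} H(A\wedge\cdots\wedge A)\cdot\ol{H(A\wedge\cdots\wedge A)}\, g\,dV\big)^{1/p}$. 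Write $\Psi := H(A\wedge\cdots\wedge A)$ ($p$-fold), a local holomorphic section (after trivializing) of $R^pf_*\Lambda^p\cT_{\cX/S}$ along the curve, and $\|\Psi\|^2 = G_p^{p}$. The first step is the standard identity for the Gaussian curvature of $G_p = (\|\Psi\|^2)^{1/p}$: since $\log G_p = \tfrac1p \log\|\Psi\|^2$, one gets
$$
K_{G_p} = -\frac{1}{p\, G_p}\,\pt_s\pt_{\ol s}\log\|\Psi\|^2
 = -\frac{1}{p\,G_p}\Big( \frac{\langle D\Psi, D\Psi\rangle^{\perp}}{\|\Psi\|^2} - \frac{R(A,\ol A,\Psi,\ol\Psi)}{\|\Psi\|^2}\Big),
$$
where $D$ is the Chern connection of the $L^2$-metric on $R^pf_*\Lambda^p\cT_{\cX/S}$, the superscript $\perp$ denotes the component orthogonal to $\Psi$, and $R(A,\ol A,\Psi,\ol\Psi)$ is the curvature tensor paired against $\pt/\pt s$ and $\Psi$. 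Dropping the (nonnegative) first term in the numerator gives the clean bound
$$
K_{G_p}\ \le\ \frac{1}{p}\cdot \frac{R(A,\ol A,\Psi,\ol\Psi)}{\|\Psi\|^2\cdot\|A\|^2}\cdot\|A\|^2 / G_p,
$$
and since $\|A\|^2 = G_1$ is comparable to $G_p^{1/p}$ only up to the normalization built into the $c_{p,n}$, it is cleaner to keep $K_{G_p}\le \tfrac1p\, R(A,\ol A,\Psi,\ol\Psi)/\|\Psi\|^2$ and substitute the curvature formula.

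The second step is to insert Theorem~\ref{th:curvgendual} (with $A_i = A_\ol\jmath = A$ and $\nu_k=\nu_\ol\ell = \Psi$, in normal coordinates at $s$). The first two terms are $\le 0$; the first one, $-\int (\Box+1)^{-1}(A\cdot A)\cdot(\Psi\cdot\Psi)\,g\,dV$, is estimated from above by $-P_n(d(\cX_s))\cdot\|A\|^2\cdot\|\Psi\|^2$ using the positivity and lower bound \eqref{eq:hker2} of the resolvent kernel exactly as in Proposition~\ref{pr:est2}; the second term is simply discarded. The third term, $-\int(\Box-1)^{-1}(A\wedge\Psi)\cdot(A\wedge\ol\Psi)\,g\,dV$, splits via the spectral decomposition into its harmonic part plus a nonpositive remainder; by Proposition~\ref{pr:est2} and the identity $H(A\wedge H(A\wedge\cdots\wedge A)) = H(A\wedge\cdots\wedge A)$ ($(p{+}1)$-fold), the harmonic part equals $\|H(A\wedge\cdots\wedge A)\|^2$ with $(p{+}1)$ factors, i.e. $G_{p+1}(s)^{p+1}$. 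Collecting,
$$
R(A,\ol A,\Psi,\ol\Psi)\ \le\ -P_n(d(\cX_s))\cdot\|A\|^2\cdot G_p(s)^p + G_{p+1}(s)^{p+1}.
$$
Dividing by $p\,\|\Psi\|^2 = p\,G_p(s)^p$ gives $K_{G_p}\le \tfrac1p\big(-P_n(d(\cX_s))\cdot\|A\|^2/G_p^{?}+\cdots\big)$; rewriting $\|A\|^2 = \|A\|_1^2 \ge c_{p,n}^{-1} G_p^{1/?}$ and $G_{p+1}^{p+1}/G_p^{p} = (\|A\|_{p+1}/\|A\|_p)^{2p+2}\cdot G_p^{?}$ via the definitions of $\|A\|_p$, and absorbing the comparison constants between $\|A\|_1$, $\|A\|_p$ into a single $c_{p,n}>0$, yields precisely \eqref{eq:curvgp}. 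For $p=n$ the third term of Theorem~\ref{th:curvgendual} is absent, hence no positive contribution, giving the sharper second inequality; the same holds if $G_{p+1}\equiv 0$, since then the harmonic part of the third term vanishes.

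The main obstacle is bookkeeping the homogeneity: one must track carefully how the $L^2$-norms $\|A\|_1^2$, $\|A\|_p$, $\|A\|_{p+1}$ scale, since $\|\Psi\|^2 = G_p^p$ while $\|A\|^2 = G_1$, and these are not proportional pointwise — only comparable after the normalization that defines $c_{p,n}$ on a relatively compact subset of moduli. The clean way is to note that $\Psi/\|\Psi\| = H(A\wedge\cdots\wedge A)/\|H(A\wedge\cdots\wedge A)\|$ is a \emph{unit} vector, apply Proposition~\ref{pr:est2} to it directly (it is stated for exactly this pairing), and only at the very end convert the resulting $\|A\|^2\cdot\|H(A\wedge\cdots\wedge A)\|^2$ and $\|H(A\wedge\cdots)\|_{p+1}$ terms into ratios of $\|A\|_p$. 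The uniformity of $P_n(d(\cX_s))$ and of the constants $c_{p,n}$ over relatively compact subsets of the moduli space then follows from properness (bounded diameters) and continuity of all quantities involved, as already remarked before the Lemma.
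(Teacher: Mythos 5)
Your proposal is correct and takes essentially the same route as the paper: identify $\Psi=H(A\we\cdots\we A)$ with a holomorphic section of $R^pf_*\Lambda^p\cT_{\cX/S}$, bound $K_{G_p}\le \frac{1}{p\,G_p}\,R(A,\ol A,\Psi,\ol\Psi)/\|\Psi\|^2$ by the second-variation inequality for $\log\|\Psi\|^2$, insert the estimate of Proposition~\ref{pr:est2} (equivalently, Theorem~\ref{th:curvgendual} plus \eqref{eq:hker2}), and note the positive term is absent for $p=n$ or $G_{p+1}\equiv 0$. Your unresolved exponents do resolve to the paper's intermediate bound $K_{G_p}\le \frac1p\bigl(-P_n(d(\cX_s))\|A\|_1^2\|A\|_p^{2p}+\|A\|_{p+1}^{2p+2}\bigr)\big/\|A\|_p^{2p+2}$, so the only missing ingredient is an explicit justification of the comparison $\|A\|_p\le c_{p,n}\|A\|_1$, which the paper supplies as Lemma~\ref{le:estwedge} (your compactness normalization would also suffice for the uniformity claimed on relatively compact subsets).
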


\begin{proof}
Let $A^p$ be the {\em harmonic projection} of the $p$-fold exterior
product of $A_s$. Then the curvature tensor for $R^pf_*
\Lambda^p\cT_{\cX/C}$ satisfies
\begin{gather}
R(\pt_s,\pt_\ol s, A^p, \ol{A^p}) \geq -\frac{\pt^2}{\pt s\ol{\pt s}}
\log(G_p^p)\cdot (G_p^p)\cdot A^p\, \ol{A^p} =\\ \nonumber- p
\frac{\pt^2}{\pt s\ol{\pt s}} \log(G_p) \cdot \|A\|^{2p}_p = p G_p K_{G_p}
\|A\|^{2p}_p .
\end{gather}
Here $R(\pt_s,\pt_\ol s,\textvisiblespace,\textvisiblespace)$ is the
curvature form applied to the tangent vectors $\pt/\pt s$ and $\pt/\ol{\pt
s}$ resp. With respect to $G_p$, we identify $G_p=\|\pt/\pt
s\|^2_p=\|A_s\|^2_p$ so that
$$
R(A_s,\ol{A_s},A^p, \ol{A^p}) \geq p K_{G_p} \|A\|^{2p+2}_p.
$$
Now the estimate of Proposition~\ref{pr:est2} implies
\begin{gather*}
K_{G_p} \leq \frac{1}{p}\left( -P_n(d(\cX_s)) \|A\|_1^2 \|A\|_p^{2p} +
\|A\|_{p+1}^{2(p+1)} \right)\Big/\|A\|_p^{2(p+1)}.
\end{gather*}
The second term is not present for $p=n$.

Now the proof follows from Lemma~\ref{le:estwedge} below.
\end{proof}
\begin{lemma}\label{le:estwedge}
$$
\|A\|_p \leq c_{p,n}\|A\|_1.
$$
for some $c_{p,n}>0$.
\end{lemma}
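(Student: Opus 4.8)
The plan is to bound the pointwise norm of the $p$-fold wedge product $A_s\wedge\cdots\wedge A_s$ by a constant times a power of the pointwise norm of $A_s$, and then integrate. Concretely, at each point $z\in\cX_s$ the tensor $A_s(z)=A^\alpha_{s\ol\beta}(z)\,\pt_\alpha\,dz^{\ol\beta}$ is an element of $\cT_{\cX_s,z}\otimes\ol{\cT^*_{\cX_s,z}}$, and its $p$-fold exterior product lives in $\Lambda^p\cT_{\cX_s,z}\otimes\Lambda^{p,0}\ol{\cT^*_{\cX_s,z}}$ (more precisely in $\Lambda^p(\cT\otimes\ol{\cT^*})$, but the symmetry of Corollary~\ref{co:symm} shows it factors through the alternating powers of each factor separately). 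Since we are working in the finite-dimensional inner product space $\cT_{\cX_s,z}$ of dimension $n$, equipped with the \ke metric $g_{\alpha\ol\beta}(z)$, there is a purely linear-algebra inequality: for any endomorphism-valued tensor $B$, the norm of $B^{\wedge p}$ with respect to the induced metric on $\Lambda^p$ is at most $\binom{n}{p}^{1/2}$ (or simply $p!^{-1/2}\cdot n^{p/2}$, any dimensional constant will do) times $\|B\|^p$. This is the elementary fact that the wedge map $B\mapsto B^{\wedge p}$ is a bounded multilinear operation whose operator norm depends only on $n$ and $p$.

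**First I would** make the pointwise estimate precise: writing $|A_s(z)|$ for the pointwise norm $\big(A^\alpha_{s\ol\beta}A^{\ol\delta}_{\ol s\gamma}g_{\alpha\ol\delta}g^{\ol\beta\gamma}\big)^{1/2}(z)$ that appears under the integral sign in \eqref{eq:wpherm}, I claim
$$
\big|(A_s\wedge\cdots\wedge A_s)(z)\big| \le \gamma_{p,n}\,|A_s(z)|^p
$$
for a constant $\gamma_{p,n}$ depending only on $p$ and $n$; this follows by diagonalizing — or just by Cauchy--Schwarz on the sum defining the wedge in coordinates — since each component of the $p$-fold wedge is a sum of at most $(n!)^2$ products of $p$ components of $A_s$, each bounded by $|A_s(z)|$. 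Next I would invoke that the harmonic projection $H$ is an orthogonal projection, hence norm-decreasing, so the pointwise norm of $H(A_s\wedge\cdots\wedge A_s)$ is controlled by the $L^2$ norm of $A_s\wedge\cdots\wedge A_s$ only after integration — more carefully, $\|A\|_p^{2p}=\int_{\cX_s}|H(A_s^{\wedge p})|^2\,g\,dV\le\int_{\cX_s}|A_s^{\wedge p}|^2\,g\,dV\le\gamma_{p,n}^2\int_{\cX_s}|A_s(z)|^{2p}\,g\,dV$.

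**The main obstacle** is passing from $\int|A_s|^{2p}$ to $\big(\int|A_s|^2\big)^p=\|A\|_1^{2p}$, since in general $L^p$-type powers do not dominate each other on a measure space of finite volume unless one bounds the pointwise sup. Here the key input is that $|A_s(z)|^2$ is itself controlled pointwise: by the elliptic equation \eqref{eq:phiA} of Proposition~\ref{pr:elleq}, $\|A_s(z)\|^2=(\Box+\mathrm{id})\varphi$, and $\varphi$ is bounded in terms of its integral by Proposition~\ref{pr:resol} applied with the constant-Ricci-curvature normalization — but more directly, elliptic regularity (Moser iteration, or the mean-value inequality for the kernel $P(z,w)$ bounded below by $P_n(d(\cX_s))>0$) gives a pointwise bound $\sup_z\|A_s(z)\|^2\le C_n\,\|A\|_1^2$, with $C_n$ depending on the diameter and dimension, uniform on relatively compact subsets of moduli. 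Granting this sup bound, $\int|A_s|^{2p}\,g\,dV\le(\sup\|A_s\|^2)^{p-1}\int\|A_s\|^2\,g\,dV\le C_n^{p-1}\|A\|_1^{2(p-1)}\cdot\|A\|_1^2=C_n^{p-1}\|A\|_1^{2p}$, and combining with the previous paragraph yields $\|A\|_p^{2p}\le\gamma_{p,n}^2C_n^{p-1}\|A\|_1^{2p}$, i.e.\ the claim with $c_{p,n}=\big(\gamma_{p,n}^2C_n^{p-1}\big)^{1/2p}$. The uniformity over relatively compact families claimed earlier is exactly the statement that $C_n$ and $d(\cX_s)$ stay bounded there, which is standard for smooth families.
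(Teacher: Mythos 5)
Your proposal follows essentially the same route as the paper: the core of both arguments is the pointwise linear-algebra bound $|(A_s\we\ldots\we A_s)(z)|\le\gamma_{p,n}|A_s(z)|^p$ (the paper gets it by diagonalizing $B^\alpha_\gamma=A^\alpha_{s\ol\beta}A^{\ol\beta}_{\ol s\gamma}$, using the symmetry of Corollary~\ref{co:symm}, and comparing the elementary symmetric expression $\sum\lambda_{i_1}\cdots\lambda_{i_p}$ with $\big(\sum\lambda_i\big)^p$), combined with the fact that the harmonic projection $H$ is norm-decreasing. Where you go beyond the printed proof is the final step: the paper simply declares the pointwise estimate ``sufficient'' because $A_s$ is harmonic, whereas you correctly isolate that one must still pass from $\int|A_s|^{2p}$ to $\big(\int|A_s|^2\big)^p$, which requires a sup bound $\sup_z|A_s(z)|^2\lesssim\|A_s\|^2_{L^2}$ for the harmonic representative. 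Two cautions about how you justify that bound. First, the kernel estimate $P(z,w)\ge P_n(d(\cX_s))$ goes the wrong way: together with Proposition~\ref{pr:elleq} it yields the lower bound for $\varphi$ of Proposition~\ref{pr:resol}, not an upper bound for $\|A_s\|^2$; the correct tool is elliptic theory for the harmonic tensor $A_s$ itself (Bochner--Weitzenb\"ock formula plus Moser iteration or Sobolev estimates). Second, the constant obtained this way is not a function of $n$ and the diameter alone, since the zeroth-order curvature term involves the full curvature of the fiber; it depends on the geometry of $\cX_s$ and is uniform only on relatively compact subsets of the moduli space --- consistent with the uniformity claim in Lemma~\ref{le:curvgp}, but it means the constant is not purely combinatorial in $(p,n)$, a dependence which the paper's one-line reduction (``since $A_s$ is harmonic \dots'') also leaves implicit.
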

\begin{proof}[Proof of Lemma~\ref{le:estwedge}]
Since $A_s$ is harmonic and $\| A_s\we\ldots\we A_s\|^2 \geq \|
H(A_s\we\ldots\we A_s)\|^2$, it is sufficient to show the pointwise
estimate (up to a constant that only depends upon the dimension and
degree)
$$
\|A_s \cdot A_{\ol s}\|^p(z) \gtrsim \|(A_s \we \ldots \we A_s) \cdot
\ol{(A_s \we \ldots \we A_s)}\|(z).
$$
We set
$$
B^\alpha_\gamma = A^\alpha_{s\ol\beta}(z) A^\ol\beta_{\ol s \gamma}(z)
$$
and use the symmetry $A_{a\ol\beta\ol\delta}$. It follows that
$$
\|(A_s \we
\ldots \we A_s) \cdot \ol{(A_s \we \ldots \we A_s)}\|(z)
$$
equals the determinant type expression
$$
\underline B(z)=\sum_{\sigma\in \mathfrak{S}_p} \epsilon(\sigma) B^{\alpha_1}_{\alpha_{\sigma(1)}}
\cdot\ldots\cdot B^{\alpha_p}_{\alpha_{\sigma(p)}}
$$
where the inner summations  take place with respect to the indices
$\alpha_1,\ldots, \alpha_p$.  At the given point $z$ we may assume that
$B^\alpha_\gamma$ is diagonal with non-negative entries
$\lambda_1,\ldots,\lambda_n$. Now it is easy to see that $\underline B(z)$
equals
$$
\sum_{1\leq i_1<  \ldots i_p \leq n} \lambda_{i_1} \cdot \ldots \cdot \lambda_{i_p}
$$
up to a numerical constant depending on $p$ and $n$. Again, up to a
numerical constant this can be estimated from below by
$$
\big(\sum^n_{i=1} \lambda_i\big)^p =  \|A_s \cdot A_\ol s\|^p(z) .
$$
\end{proof}
\begin{lemma}\label{le:estquot}
For any family over a base space $S$, which is mapped to a relatively
compact subset of the moduli space,
$$
\max_{A\neq 0}\{{\|A\|_{p+1}}/{\|A\|_{p}}\}
$$
is finite.
\end{lemma}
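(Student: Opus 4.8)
The plan is to reduce the claimed finiteness to a continuity-and-compactness argument on an appropriate parameter space. First I would observe that the quantity $\|A\|_{p+1}/\|A\|_p$ is invariant under scaling $A \mapsto \lambda A$ for $\lambda \in \C^*$ (both numerator and denominator scale like $|\lambda|$ since $\|A\|_q = \|\underbrace{A\wedge\cdots\wedge A}_{q}\|^{1/q}$ and the $q$-fold wedge scales like $|\lambda|^q$). Hence $\max_{A\neq 0}\{\|A\|_{p+1}/\|A\|_p\}$ at a fixed point $s\in S$ is the maximum of a function on the projectivized space $\mathbb{P}(H^1(\cX_s,\cT_{\cX_s}))$ of \ks classes, identified via the harmonic representative. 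This maximum is finite for each individual $s$ because the function $A \mapsto \|A\|_{p+1}/\|A\|_p$ is continuous (the harmonic projection $H$ and the $L^2$-norms depend continuously on $A$, and $\|A\|_p \neq 0$ whenever $A \neq 0$ by Lemma~\ref{le:estwedge} together with $\|A\|_1 \neq 0$) on a compact space.

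Next I would let $s$ vary. The hypothesis is that the family's classifying map sends $S$ into a \emph{relatively compact} subset $Z$ of the moduli space $\cM_{\text{can}}$; by passing to a finite cover or to the closure, the relevant data — the \ke metrics $\omega_{\cX_s}$, hence the $L^2$-inner products $G^{WP}_{i\ol\jmath}(s)$, the Laplacians $\Box_{g,s}$, and consequently the harmonic projections $H$ — vary continuously (indeed smoothly) in $s$ over a compact parameter set. Therefore the function
$$
(s,[A]) \longmapsto \frac{\|A\|_{p+1}}{\|A\|_p}
$$
is continuous on the total space of the projectivized bundle $\mathbb{P}(R^1f_*\cT_{\cX/S})$ restricted over (the closure of the image of) $S$, which is compact. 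A continuous function on a compact space attains its maximum, which is the assertion.

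The main obstacle is making precise the continuous (or smooth) dependence of all the analytic ingredients — \ke metric, Green's operator, harmonic projection $H$ — on the moduli parameter, uniformly over the relatively compact subset, and checking that $\|A\|_p$ stays bounded away from zero there; once one knows the fibers stay in a compact family of canonically polarized manifolds this follows from standard elliptic theory and the uniform lower bound $\|A\|_p \gtrsim \|A\|_1$ of Lemma~\ref{le:estwedge}, so the remaining argument is the routine compactness statement above.
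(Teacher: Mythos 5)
Your reduction to a compactness statement hides the two points where the argument actually breaks. First, the inequality you invoke from Lemma~\ref{le:estwedge} goes the wrong way: that lemma gives $\|A\|_p \leq c_{p,n}\|A\|_1$, an \emph{upper} bound on $\|A\|_p$, so it does not prevent the denominator from vanishing. In fact $G_p$ is only a pseudo-metric: the harmonic projection $H(A\wedge\cdots\wedge A)$ can vanish for $A\neq 0$ (this is exactly why the integer $p_0$ is introduced before Proposition~\ref{pr:nonisotr}, and why the paper speaks of pseudo-metrics with zeroes). Hence the function $(s,[A])\mapsto \|A\|_{p+1}/\|A\|_p$ is not everywhere defined, let alone continuous, on your projectivized parameter space, and ``a continuous function on a compact space attains its maximum'' cannot be applied as stated.

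Second, even away from zeroes of the denominator, the continuity in $s$ that you describe as ``routine'' is precisely the delicate point. The norms $\|A\|_p$ involve the harmonic projection in $\cA^{0,p}(\cX_s,\Lambda^p\cT_{\cX_s})$, and this projection does \emph{not} vary continuously at parameters where $\dim H^p(\cX_s,\Lambda^p\cT_{\cX_s})$ jumps, i.e.\ where the direct images $R^pf_*\Lambda^p\cT_{\cX/S}$ fail to be locally free; the paper explicitly refrains from assuming local freeness here. This is why the paper's proof does not argue by compactness over the moduli space at all: it passes to a modification $\wt S\to S$ so that the pull-backs of $R^pf_*\Lambda^p\cT_{\cX/S}$ modulo torsion become locally free (where continuity of the quotient does hold and gives boundedness), and then restricts the family to the support of the torsion (the jumping locus) and repeats, i.e.\ a Noetherian induction over a stratification. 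Your proposal is missing exactly this device for handling the jumping/torsion locus, and without it the continuity claim on which everything rests is false in general.
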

\begin{proof}
First we take a modification $\wt S \to S$ such that the pull backs of the
direct image sheaves $R^pf_*\Lambda^p\cT_{\cX/S}$ modulo torsion are
locally free. Then, by continuity the above quotients are bounded from
above. Next, we restrict the original family to the image of the locus of
torsion, i.e.\ to the support of the annihilator ideal, and repeat the
process.
\end{proof}

\subsection{Hyperbolicity conjecture of Shafarevich}
In \cite[3.2]{demasantacruz} Demailly gives a proof of the Ahlfors lemma
for hermitian metrics of negative curvature in the context of currents
using an approximation argument. Our argument depends upon the following
special case:

\begin{proposition}[Demailly]\label{pr:ahlschw}
Let $\gamma=\gamma(s) \ii ds\wedge \ol{ds}$, $\gamma(s)\geq 0$ be given on
an open disk $\Delta_R=\{|s|<R\}$, where $\log \gamma(s)$ is a subharmonic
function such that $(\ddb(\log \gamma) \geq A\, \gamma$ in the sense of
currents for some $A>0$. Let $\rho$ denote the Poincaré metric on
$\Delta_R$. Then $\gamma\leq \rho/A$ holds.
\end{proposition}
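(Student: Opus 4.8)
The plan is to reduce to the smooth case by an exhaustion/regularization argument and then run the classical Ahlfors--Schwarz maximum-principle computation. First I would recall that on $\Delta_R$ the Poincar\'e metric is $\rho = \rho(s)\ii ds\wedge\ol{ds}$ with $\rho(s) = \frac{1}{R^2}\bigl(1-|s|^2/R^2\bigr)^{-2}$ (up to the usual normalizing constant), and it has constant curvature $-1$, i.e. $\ddb\log\rho = \rho$. The function to examine is the ratio $u := \gamma/\rho$ (defined wherever $\rho>0$, which is all of $\Delta_R$); the goal $\gamma\le\rho/A$ is exactly $\sup_{\Delta_R} u \le 1/A$. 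Since $\gamma$ may only be a current, I would first fix $0<r<R$, work on the smaller disk $\Delta_r$ where $\rho$ is bounded, and replace $\gamma$ by a decreasing family of smooth approximants $\gamma_\varepsilon$ (convolution with a mollifier) which are still $\ge 0$, still have $\log\gamma_\varepsilon$ subharmonic, and satisfy $\ddb(\log\gamma_\varepsilon)\ge A\,\gamma_\varepsilon$ with an error that tends to $0$; this is precisely the approximation device of \cite[3.2]{demasantacruz} that the statement alludes to. Passing to the limit $\varepsilon\to 0$ and then $r\to R$ at the end recovers the current case, so it suffices to treat smooth $\gamma$.

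With $\gamma$ smooth and $>0$ on $\Delta_r$ (a further harmless $\varepsilon$-perturbation $\gamma+\varepsilon\rho$ handles zeros), consider $u=\gamma/\rho$ on the \emph{closed} disk $\ol{\Delta_{r'}}$ for $r'<r$. Because $\rho\to\infty$ at $|s|=r'$ only in the limit $r'\to R$, and $\gamma$ is bounded on $\ol{\Delta_{r'}}$, the function $u$ extends continuously to $\ol{\Delta_{r'}}$ with $u\equiv$ small near the boundary as $r'\uparrow R$; so either the sup of $u$ is attained at an interior point $s_0$, or it is controlled by the boundary values which go to $0$. At an interior maximum $s_0$ of $u$ (equivalently of $\log u=\log\gamma-\log\rho$) we have $\ddb(\log u)(s_0)\le 0$, i.e.
\[
0 \ge \ddb(\log\gamma)(s_0) - \ddb(\log\rho)(s_0) \ge A\,\gamma(s_0) - \rho(s_0),
\]
using the curvature hypothesis on $\gamma$ and $\ddb\log\rho=\rho$. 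Dividing by $\rho(s_0)>0$ gives $A\,u(s_0)\le 1$, hence $u(s_0)\le 1/A$, so $\sup_{\ol{\Delta_{r'}}} u\le 1/A$. Letting $r'\to R$ (the boundary contribution disappears since $\rho\to\infty$ there while $\gamma$ stays locally bounded) yields $\sup_{\Delta_R} u\le 1/A$, i.e. $\gamma\le\rho/A$ on $\Delta_R$. Finally, undoing the two regularizations ($\varepsilon\to 0$ in $\gamma+\varepsilon\rho$, and the mollification) by monotone/dominated convergence gives the inequality in the sense of currents.

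The main obstacle is the regularity issue: making the maximum-principle step rigorous when $\log\gamma$ is only a subharmonic function and the curvature bound $\ddb(\log\gamma)\ge A\gamma$ holds only as an inequality of currents. This is where Demailly's approximation argument is essential — one must check that mollifying $\log\gamma$ preserves both subharmonicity and (approximately) the differential inequality, and that the right-hand side $A\gamma_\varepsilon$ converges appropriately; the interior-maximum argument itself is then completely standard. Everything else (the explicit Poincar\'e metric, its curvature, the boundary behavior as $r'\to R$) is routine.
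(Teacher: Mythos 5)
First, note that the paper itself gives no proof of this proposition: it is quoted as a special case of Demailly's Ahlfors lemma with a reference to \cite[3.2]{demasantacruz}, so your proposal can only be compared with the standard argument cited there (regularization of the current-valued hypothesis plus the Ahlfors--Schwarz maximum principle), which is indeed the strategy you adopt, and your interior-maximum computation and your identification of the regularization as the delicate point are both correct.

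There is, however, a genuine gap in your exhaustion/boundary step. You work with the Poincar\'e metric $\rho$ of the \emph{full} disk $\Delta_R$ on the subdisks $\ol{\Delta_{r'}}$ and claim that the boundary values of $u=\gamma/\rho$ become negligible as $r'\uparrow R$ ``since $\rho\to\infty$ there while $\gamma$ stays locally bounded.'' Local boundedness of $\gamma$ on the open disk gives no uniform control as $r'\uparrow R$: the hypotheses allow $\gamma$ to blow up at $\pt\Delta_R$ at exactly the Poincar\'e rate (e.g.\ $\gamma=\rho/A$ satisfies them with equality, and then $u\equiv 1/A$ is nowhere small near the boundary). So the supremum of $u$ on $\ol{\Delta_{r'}}$ may sit on $|s|=r'$ for every $r'$, where your maximum-principle inequality says nothing, and the limit $r'\to R$ does not rescue the argument; in effect the claimed boundary decay presupposes the conclusion. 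The standard repair — and the one implicit in Demailly's proof — is to compare $\gamma$ on $\Delta_{r'}$ with the Poincar\'e metric $\rho_{r'}$ \emph{of the subdisk} $\Delta_{r'}$ (which also satisfies $\ddb\log\rho_{r'}=\rho_{r'}$): since $\log\gamma$ is subharmonic, $\gamma$ is bounded above on $\ol{\Delta_{r'}}$ while $\rho_{r'}\to\infty$ at $|s|=r'$, so $u_{r'}=\gamma/\rho_{r'}$ genuinely tends to $0$ at the boundary, attains an interior maximum, and your computation gives $\gamma\leq\rho_{r'}/A$ on $\Delta_{r'}$; letting $r'\to R$ and using $\rho_{r'}\downarrow\rho$ yields the claim. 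With that substitution (and with the mollification $\gamma_\varepsilon=\exp\bigl((\log\gamma)*\chi_\varepsilon\bigr)$, which preserves the differential inequality by Jensen's inequality, so the smoothing step is even cleaner than you suggest), your proof goes through; also, the $\gamma+\varepsilon\rho$ perturbation you invoke to avoid zeros needs a justification that the curvature hypothesis survives the sum, whereas it is simpler to note that zeros of $\gamma$ cause no difficulty at an interior maximum of $u_{r'}$.
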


Before we construct a Finsler metric of negative holomorphic curvature, we
will treat families over curves directly.

Let $C$ be a compact smooth complex curve and $f: \cX\to C$ a holomorphic
family of smooth canonically polarized varieties. Let $0 \neq \pt/\pt s$
be a local coordinate vector field on $C$ and $A_s$ the corresponding
family of harmonic \ks forms. Then we set
$$
p_0 := \max \{p; H(\underbrace{A_s\wedge \ldots A_s}_p) \not\equiv 0 \text{ on some open }U \subset C \}.
$$
Observe that the zero set of $H(A_s \wedge \ldots \wedge A_s)$ is
analytic.

We do not have to assume that the direct images
$R^pf_*\Lambda^p\cT_{\cX/C}$ are locally free. This is the case over the
complement $C'=C\backslash \{c_1,\ldots,c_k\}$ of a finite set of points.
Near points $c_j$ we can compute \ks forms $B_s$, $s\in C$ in terms o a
differential trivialization so that the $L^2$-norms of the $B_s$ are
bounded near the critical points. Hence the norms of the harmonic
representatives $A_s$ are bounded near $c_j$. The same holds for the wedge
product of these. Because of the boundedness $\log G_{p_0}$ is subharmonic
on all of $C$.

\begin{proposition}\label{pr:nonisotr}
Let $f: \cX\to C$ a non-isotrivial holomorphic family of smooth
canonically polarized varieties over a curve, and let $0<p_0\leq n$ be
chosen as above. Then $\log G_{p_0}$ is subharmonic, and for the curvature
\begin{equation}\label{eq:curvgp1}
K_{G_{p_0}} \leq - \frac{1}{p \cdot c_{p,n}} P_n(d(\cX_s))
\end{equation}
holds so that the curvature current is negative.
\end{proposition}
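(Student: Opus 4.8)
The plan is to combine the curvature estimate of Lemma~\ref{le:curvgp} with a removable-singularity argument for subharmonic functions. Over $C'=C\setminus\{c_1,\ldots,c_k\}$ the direct images $R^pf_*\Lambda^p\cT_{\cX/C}$ are locally free; write $\sigma(s):=H(\underbrace{A_s\we\cdots\we A_s}_{p_0})$ for the associated holomorphic section of $R^{p_0}f_*\Lambda^{p_0}\cT_{\cX/C}$ over (Stein) opens of $C'$, so that $p_0\log G_{p_0}=\log\|\sigma\|^2$ with $\|\cdot\|$ the $L^2$-metric. Thus $G_{p_0}$ is continuous and its zero set $Z$ in $C'$ is the analytic zero set of $\sigma$; by the choice of $p_0$ the section $\sigma$ is not identically zero on the connected curve $C'$, so $Z$ is discrete. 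Non-isotriviality enters only to force $A_s\not\equiv 0$, whence $1\le p_0\le n$ is well defined and $G_{p_0}\not\equiv 0$.

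First I would pin down which case of Lemma~\ref{le:curvgp} is available. If $p_0<n$ then, by maximality of $p_0$, the form $H(A_s\we\cdots\we A_s)$ with $p_0+1$ factors vanishes on every open subset of $C$, hence identically, so $G_{p_0+1}\equiv 0$; if $p_0=n$ this is automatic. In either case the second inequality of Lemma~\ref{le:curvgp} applies and gives, at every point of $C'$ with $G_{p_0}(s)\ne 0$,
$$
K_{G_{p_0}}(s)\ \le\ -\,\frac{1}{p_0\,c_{p_0,n}}\,P_n(d(\cX_s))\ <\ 0 .
$$
Since $C$ is compact with smooth fibres, $d(\cX_s)$ stays bounded and $P_n(d(\cX_s))$ is bounded below by a positive constant; thus $\ddb\log G_{p_0}\ge \frac{1}{p_0 c_{p_0,n}}P_n(d(\cX_s))\,G_{p_0}\,\ii\,ds\we\ol{ds}\ge 0$, and in particular $\log G_{p_0}$ is subharmonic, on $C'\setminus Z$.

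Next I would extend $\log G_{p_0}$ as a subharmonic function, first across $Z$ and then across $\{c_1,\ldots,c_k\}$. Each point of $Z$ is isolated in $C'$, and there $\log G_{p_0}$ is subharmonic off that point and bounded above (indeed $\log G_{p_0}\to-\infty$), since $G_{p_0}$ is continuous; the classical removable-singularity theorem — a function subharmonic on a punctured disc and bounded above extends uniquely to a subharmonic function on the disc — then makes $\log G_{p_0}$ subharmonic on all of $C'$. At each $c_j$ I would invoke the boundedness statement preceding the proposition: in a $\cinf$ trivialisation of $f$ near $c_j$ the \ks forms $B_s$ have $L^2$-norms bounded uniformly for $s$ near $c_j$, hence, by the norm-decreasing property of orthogonal projection followed by elliptic estimates for the fibrewise $\Box$, the $C^0$-norms of the harmonic representatives $A_s$ — and pointwise those of their $p_0$-fold wedge products — stay bounded near $c_j$; thus $G_{p_0}=\|H(A_s\we\cdots\we A_s)\|^{2/p_0}$ is bounded near $c_j$, $\log G_{p_0}$ is bounded above there, and the same theorem extends $\log G_{p_0}$ subharmonically to all of $C$. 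One then checks that the pointwise bound persists as a current inequality $\ddb\log G_{p_0}\ge \frac{1}{p_0 c_{p_0,n}}P_n(d(\cX_s))\,G_{p_0}\,\ii\,ds\we\ol{ds}$ on $C$: it holds as an inequality of measures on the dense open set $C'\setminus Z$, the complement of which in $C$ is finite hence null, while $\ddb\log G_{p_0}$ is a positive measure.

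Combining these steps, $\log G_{p_0}$ is subharmonic on $C$, satisfies the stated curvature bound wherever $G_{p_0}\ne 0$, and — since $G_{p_0}\not\equiv 0$ — its $\ddb$ is a non-trivial positive current dominating $\frac{1}{p_0 c_{p_0,n}}P_n(d(\cX_s))\,G_{p_0}\,\ii\,ds\we\ol{ds}$; equivalently, the curvature current $-\ddb\log G_{p_0}$ of the pseudo-metric $G_{p_0}$ is negative and non-zero. I expect the main obstacle to be exactly this extension across the bad locus — the zeros of $G_{p_0}$ in $C'$ together with the finitely many points where $R^{p_0}f_*\Lambda^{p_0}\cT_{\cX/C}$ fails to be locally free — carried out so that the curvature-current estimate is not lost; the decisive inputs are the boundedness of $G_{p_0}$ near those points (near the $c_j$ via the $\cinf$ trivialisation, the norm-decreasing property of harmonic projection, and elliptic bootstrapping) and the removability of isolated points for subharmonic functions bounded above near them.
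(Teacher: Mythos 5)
Your proposal is correct and follows essentially the same route as the paper: the curvature bound comes from the second case of Lemma~\ref{le:curvgp} (since by maximality of $p_0$ either $p_0=n$ or $G_{p_0+1}\equiv 0$) at points where $R^{p_0}f_*\Lambda^{p_0}\cT_{\cX/C}$ is locally free and $G_{p_0}\neq 0$, while subharmonicity across the zeros of $G_{p_0}$ and the finitely many points $c_j$ follows from the boundedness of the harmonic Kodaira--Spencer forms and their wedge products. You merely make explicit the removable-singularity and current-extension steps that the paper leaves implicit, which is a faithful filling-in rather than a different argument.
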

As long as the fibers are smooth, the diameters $d(\cX_s)$ are bounded
from above. The \!\! {\em proof}\/ of the estimate follows from
Lemma~\ref{le:curvgp} at points, where the direct image of order $p_0$ is
locally free.

At points with {\em singular fibers}, $\log G_{p_0}$ need no longer be
subharmonic, unless current of integration is added. An elementary
(counter$\text{-}$)ex\-am\-ple in fiber dimension zero is the map from a
hyperelliptic to a rational curve.

By Proposition~\ref{pr:ahlschw}) the genus of the base must be larger than
one. This gives a short proof of the following version of Shafarevich's
hyperbolicity conjecture for canonically polarized varieties \cite{b-v,
keko,kk, kv1, kv2, m, v-z, v-z2}.

\smallskip

{\bf Application.} {\it If a compact curve $C$ parameterizes a
non-isotrivial family of canonically polarized manifolds, its genus must
be greater than one.}

\subsection{Finsler metric on the moduli stack} Different notions are
common. We do not assume the triangle inequality/convexity. Such metrics
are also called {\em pseudo-metrics} (cf.\ \cite{kobook}).
\begin{definition}\label{de:fins}
Let $Z$ be a reduced complex space and $TZ$ it Zariski tangent fiber
bundle. An upper semi-continuous function
$$
F:TZ \to [0,\infty)
$$
is called Finsler pseudo-metric (or pseudo-length function), if
$$
F(av)=|a|F(v) \text{ for all } a\in \C, v\in TZ.
$$
\end{definition}
The triangle inequality on the fibers not required for the definition of
the ''holomorphic'' (or ''sectional'') curvature.

All metrics $G_p$ from Section~\ref{ss:curv} are (upper semi-continuous)
Finsler pseudo-metrics.

A pseudometric $\gamma$ for a curve $C$ like in
Proposition~\ref{pr:ahlschw} and\ref{pr:nonisotr} may have isolated
zeroes.

We will use the fact that the holomorphic curvature of a Finsler metric at
a certain point $p$ in the direction of a tangent vector $v$ is the
supremum of the curvatures of the pull-back of the given Finsler metric to
a holomorphic disk through $p$ and tangent to $v$ (cf.\
\cite{abate-patrizio}). (For a hermitian metric, the holomorphic curvature
is known to be equal to the holomorphic sectional curvature.) In view of
Demailly's theorem, a Finsler metric may be defined in the above sense (as
long as the fibers are smooth, which is always the case. Furthermore, any
convex sum $G=\sum_j a_j G_j$, $a_j>0$ is upper semi-continuous and has
the property that $\log G$ restricted to a curve is subharmonic.

\begin{lemma}[cf.\ {\cite[Lemma~3]{sch:framas}}]\label{le:convsum}
Let $C$ be a complex curve and $G_j$ a collection of pseudo-metrics of
bounded curvature, whose sum has no common zero. Then the curvatures $K$
satisfy the following equation.
\begin{equation}\label{eq:curvest}
K_{\sum_{j=1}^k G_j} \leq \sum_{j=1}^k \frac{G_j^2}{(\sum_{i=1}^k G_i)^2} K_{G_j} .
\end{equation}
\end{lemma}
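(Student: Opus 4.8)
The plan is to reduce \eqref{eq:curvest} to the Cauchy--Schwarz inequality by an elementary computation, carried out first where all the $G_j$ are smooth and strictly positive and then extended to currents by regularization. Recall that for a metric written as $G=G(s)\,\ii ds\wedge\ol{ds}$ on a curve the Gaussian curvature is $K_G=-\,G^{-1}\pt_s\pt_{\ol s}\log G$ (up to the usual positive normalizing constant, which is immaterial here), and that this is meaningful as a current wherever $G>0$. By hypothesis the $G_j$ have no common zero, so $G:=\sum_{j}G_j>0$ everywhere and the left-hand side of \eqref{eq:curvest} is defined.

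First I would work at a point where every $G_j$ is of class $C^2$ and positive. Writing $\pt=\pt_s$, $\ol\pt=\pt_{\ol s}$, one has the identity $G_j\,\pt\ol\pt\log G_j=\pt\ol\pt G_j-G_j^{-1}|\pt G_j|^2$, and likewise with $G$ in place of $G_j$. Since $K_{G_j}G_j=-\pt\ol\pt\log G_j$, multiplying the asserted inequality by $-G<0$ turns it into $G\,\pt\ol\pt\log G\ \ge\ \sum_j G_j\,\pt\ol\pt\log G_j$, i.e.
$$\pt\ol\pt G-\frac{|\pt G|^2}{G}\ \ge\ \sum_j\pt\ol\pt G_j-\sum_j\frac{|\pt G_j|^2}{G_j}.$$
As $\pt\ol\pt G=\sum_j\pt\ol\pt G_j$ and $\pt G=\sum_j\pt G_j$, the terms $\pt\ol\pt G_j$ cancel and this becomes exactly
$$\sum_j\frac{|\pt G_j|^2}{G_j}\ \ge\ \frac{\bigl|\sum_j\pt G_j\bigr|^2}{\sum_j G_j},$$
which is Cauchy--Schwarz applied to the vectors $(\pt G_j/\sqrt{G_j})_j$ and $(\sqrt{G_j})_j$. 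Thus \eqref{eq:curvest} holds pointwise on the open set where all the $G_j$ are positive and smooth.

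For the general case I would invoke the smoothing device that underlies Demailly's form of the Ahlfors lemma (Proposition~\ref{pr:ahlschw}): each $\log G_j$ has bounded curvature, hence agrees locally, up to a smooth term, with a subharmonic function, and its zero set is a discrete analytic subset of the curve; replacing each $\log G_j$ by a decreasing family of smooth regularizations $\log G_j^{(\varepsilon)}$, applying the pointwise inequality just proved to these, and letting $\varepsilon\to0$ yields \eqref{eq:curvest} as an inequality of currents, the discrete exceptional sets being negligible because the $G_j$ are locally bounded there. The main obstacle is this last step: carrying out the regularization simultaneously in all $j$ and checking that the pointwise estimate survives the passage to the limit as a current inequality near the (discrete) zero sets of the individual $G_j$. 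The algebra of the second paragraph is immediate once the cancellation is organized as above.
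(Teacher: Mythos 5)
Your Cauchy--Schwarz computation is precisely the standard argument behind this lemma: the paper itself gives no proof but refers to \cite[Lemma~3]{sch:framas}, where the same reduction via $G_j\,\pt\ol\pt\log G_j=\pt\ol\pt G_j-|\pt G_j|^2/G_j$ and the Cauchy--Schwarz inequality for $\bigl|\sum_j\pt G_j\bigr|^2\le\bigl(\sum_j|\pt G_j|^2/G_j\bigr)\bigl(\sum_j G_j\bigr)$ is used, so your smooth-case computation is the proof. Your concluding regularization remarks concern only the interpretation at zeros of individual $G_j$ (in the paper's application this is handled by the boundedness/subharmonicity of $\log G_p$ and the analyticity of the zero sets) and do not affect the correctness of the core argument.
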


Now with Lemma~\ref{le:convsum} and Lemma~\ref{le:curvgp} we can construct
convex sums of the metrics $G_p$ with negative holomorphic curvature. In
this way we arrive at a (upper semi-continuous) Finsler metric rather than
a pseudo-metric. The convex sum accounts both cases where some $G_p$
vanishes or not. The metric is primarily given on local universal
families, but intrinsically given. It descends to the coarse moduli space
in the orbifold sense.

\begin{theorem}\label{pr:exfins}
On any relatively compact subset of the moduli space of canonically
polarized manifolds there exists a Finsler orbifold metric, i.e.\ a
Finsler metric on the moduli stack, whose holomorphic curvature is bounded
from above by a negative constant.
\end{theorem}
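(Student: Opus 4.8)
The plan is to obtain $F$ as $F:=\sqrt{G}$, where $G=\sum_{p=1}^{n}a_{p}G_{p}$ is a convex combination of the generalized \wp pseudo-metrics $G_{p}$ of Section~\ref{ss:curv} with positive real weights $a_{1},\dots ,a_{n}$ to be pinned down at the end. The first point to check is that $F$ is an honest Finsler metric and not merely a pseudo-metric. Over the moduli stack the tautological (Kuranishi) family is effectively parameterized, so $G_{1}$ is the \emph{strictly positive} generalized \wp metric, whence $G\ge a_{1}G_{1}>0$ on nonzero tangent vectors. Since $G_{p}(av)=|a|^{2}G_{p}(v)$ we get $F(av)=|a|F(v)$, and $F$ is upper semi-continuous as a finite sum of upper semi-continuous functions. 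Functoriality of the assignment $s\mapsto A_{s}$ (harmonic \ks form), of its exterior powers, harmonic projections and fibrewise $L^{2}$-norms, together with compatibility with base change, makes each $G_{p}$, hence $G$, intrinsic: it is defined first on local universal families and then descends to the moduli stack in the orbifold sense. It remains to prove the curvature bound.

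Fix the relatively compact subset $Z$ of the moduli space. Its fibres are smooth, so their diameters are bounded, $d(\cX_{s})\le\bar d<\infty$, and $P_{n}(d(\cX_{s}))\ge P_{n}(\bar d)=:\delta_{0}>0$; moreover $\|A\|_{p}\le c_{p,n}\|A\|_{1}$ by Lemma~\ref{le:estwedge}, with $c_{p,n}$ depending only on $n$ and $p$, and $\|A\|_{p+1}\le C_{p}\|A\|_{p}$ on $Z$ by Lemma~\ref{le:estquot}. The holomorphic curvature of $F$ at a point in a direction $v$ is the supremum over germs of holomorphic disks through that point tangent to $v$ of the curvature of the pulled-back metric (cf.\ \cite{abate-patrizio}), so it is enough to estimate $K_{G}$ along an arbitrary holomorphic curve in $Z$. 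Along such a curve the $G_{p}$ are pseudo-metrics of uniformly bounded curvature (Lemma~\ref{le:curvgp}) with no common zero (since $G_{1}>0$), so Lemma~\ref{le:convsum} applied to the metrics $a_{p}G_{p}$, together with $K_{aG_{p}}=a^{-1}K_{G_{p}}$, gives $G^{2}K_{G}\le\sum_{p=1}^{n}a_{p}G_{p}^{2}K_{G_{p}}$. Writing $G_{p}=\|A\|_{p}^{2}$ and inserting the uniform estimate from the proof of Lemma~\ref{le:curvgp},
\[
G_{p}^{2}K_{G_{p}}\;\le\;\frac1p\Bigl(-\,P_{n}(d(\cX_{s}))\,\|A\|_{1}^{2}\,\|A\|_{p}^{2}\;+\;\|A\|_{p}^{\,2-2p}\,\|A\|_{p+1}^{\,2p+2}\Bigr),
\]
one sees that the only possibly positive contribution arises for $p<n$ and is bounded by $\mathcal{P}_{p}:=\tfrac{a_{p}}{p}\|A\|_{p}^{2-2p}\|A\|_{p+1}^{2p+2}$, while for $p=n$ there is no positive term, since $\|A\|_{n+1}\equiv0$.

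The heart of the proof is to absorb each $\mathcal{P}_{p}$ into the negative leading term $\mathcal{N}_{p+1}:=\tfrac{a_{p+1}}{p+1}P_{n}(d(\cX_{s}))\|A\|_{1}^{2}\|A\|_{p+1}^{2}$ at the next level. Using $\|A\|_{p+1}\le C_{p}\|A\|_{p}$, $\|A\|_{p}\le c_{p,n}\|A\|_{1}$ and $P_{n}\ge\delta_{0}$, one checks $\mathcal{P}_{p}\le\tfrac12\mathcal{N}_{p+1}$ as soon as $a_{p+1}$ is large enough relative to $a_{p}$ (e.g.\ $a_{p+1}\ge\tfrac{2(p+1)}{p\,\delta_{0}}C_{p}^{2p}c_{p,n}^{2}\,a_{p}$). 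Taking $a_{1}=1$ and choosing the remaining weights inductively this way, summation (with $\mathcal{P}_{n}=0$ and $\mathcal{N}_{1}\ge0$) yields
\[
G^{2}K_{G}\;\le\;\sum_{p=1}^{n}(\mathcal{P}_{p}-\mathcal{N}_{p})\;\le\;-\,\frac12\sum_{p=1}^{n}\mathcal{N}_{p}\;\le\;-\,\frac{\delta_{0}}{2}\sum_{p=1}^{n}\frac{a_{p}}{p\,c_{p,n}^{2}}\,G_{p}^{2},
\]
the last step using $\|A\|_{1}^{2}\ge c_{p,n}^{-2}\|A\|_{p}^{2}$. Since $G=\sum a_{p}G_{p}$ gives $G^{2}\le n\sum a_{p}^{2}G_{p}^{2}$ by Cauchy--Schwarz, dividing through produces the uniform bound $K_{G}\le-\tfrac{\delta_{0}}{2n}\bigl(\max_{p}p\,c_{p,n}^{2}a_{p}\bigr)^{-1}<0$ on $Z$, i.e.\ the holomorphic curvature of $F$ is bounded above by a negative constant. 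At points where individual $G_{p}$ vanish, $G$ is still strictly positive and the above estimate for $G^{2}K_{G}$ persists in the sense of currents, which is precisely what is needed both for the notion of holomorphic curvature of $F$ and for Demailly's version of the Ahlfors lemma (Proposition~\ref{pr:ahlschw}).

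I expect the main obstacle to be exactly this weight-chasing. No single $G_{p}$ with $p<n$ need have negative curvature, because the term $\|A\|_{p+1}^{2p+2}$ in Lemma~\ref{le:curvgp} may dominate, so one cannot use one $G_{p}$ alone or an arbitrary convex combination; the telescoping choice $a_{1}\ll a_{2}\ll\dots\ll a_{n}$ is what makes the sum work, and it relies essentially on the uniform finiteness of the ratios $\|A\|_{p+1}/\|A\|_{p}$ over relatively compact subsets (Lemma~\ref{le:estquot}) and on the disappearance of the bad term at the top level $p=n$. A secondary but genuine point is the regularity of $G$ along the analytic loci where some $G_{p}$ vanishes and the resulting currential reading of the curvature inequality, for which Demailly's approximation framework in \cite{demasantacruz} is used.
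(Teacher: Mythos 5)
Your proposal follows essentially the same route as the paper: a convex combination $\sum_p a_p G_p$ of the Finsler pseudo-metrics of Section~\ref{ss:curv}, estimated via Lemma~\ref{le:curvgp}, Lemma~\ref{le:estwedge}, Lemma~\ref{le:estquot} and the convex-sum curvature inequality of Lemma~\ref{le:convsum}, with the holomorphic curvature computed through holomorphic disks and the metric descending to the moduli stack in the orbifold sense, positivity coming from $G_1>0$ for effectively parameterized families. The paper only sketches the choice of the weights; your inductive absorption of the positive term at level $p$ into the negative leading term at level $p+1$ (possible because the bad term is absent at $p=n$ and the ratios $\|A\|_{p+1}/\|A\|_p$ are uniformly bounded on relatively compact subsets) is a correct way of filling in that step.
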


\section{Computation of the curvature}
We know from Lemma~\ref{le:dolb} that the metric tensor for
$R^{n-p}f_*\Omega^p_{\cX/S}(\cK_{\cX/S})$ on the base space $S$ is given
in terms of an integral which involves harmonic representatives of certain
cohomology classes and that these are the restrictions of certain
$\ol\pt$-closed differential forms on the total space. We already saw that
these give rise to fiber integrals. When we actually compute derivatives
with respect to the base, we will apply Lie derivatives with respect to
horizontal lifts of tangent vectors of the base. At this point we need to
take into account that the exterior derivatives $\pt$ has to be taken with
respect to the hermitian metric on the relative canonical line bundle.
Here covariant derivatives with respect to the total space occur (at least
in an implicit way). Since we are dealing with alternating forms we may
use covariant derivatives with respect to the \ka structure on the fibers,
which is necessary to somewhat simplify the computations. Again, we will
use the semi-colon notation for covariant derivatives and use a $|$-symbol
for ordinary derivatives, if necessary. Greek indices are being used for
fiber coordinates, Latin indices indicate the base direction. Dealing with
alternating forms, for instance of degree $(p,q)$, extra coefficients of
the form $1/p!q!$ are sometimes customary; these play a role, when the
coefficients of an alternating form are turned into skew-symmetric tensors
by taking the average. However, for the sake of a halfway simple notation,
we follow the better part of the literature and leave these to the reader.

\subsection{Setup}
As above, we denote by $f:\cX \to S$ a smooth family of canonically
polarized manifolds and we pick up the notation from
Section~\ref{se:posi}. The fiber coordinates were denoted by $z^\alpha$
and the coordinates of the base by $s^i$. We set $\pt_i=\pt/\pt s^i$,
$\pt_\alpha=\pt/\pt z^\alpha$.

Again we have {\em horizontal lifts of tangent vectors and coordinate
vector fields on the base}
$$
v_i= \pt_i + a_i^\alpha  \pt_\alpha.
$$
As above we have the corresponding harmonic representatives
$$
A_i=A^\alpha_{i\ol\beta}\pt_\alpha dz^{\ol \beta}
$$
of the \ks classes $\rho(\pt_i|_{s_0})$.

For the computation of the curvature it is sufficient to treat the case
where $\dim S =1$. We set $s=s_1$ and $v_s=v_1$ etc. In this case we write
$s$ and $\ol s$ for the indices $1$ and $\ol 1$ so that
$$
v_s= \pt_s + a_s^\alpha \pt_\alpha
$$
etc.

Sections of \RP will be denoted by letters like $\psi$.
\begin{gather*}
\psi|_{\cX_s} =
\psi_{\alpha_1,\ldots,\alpha_p,\ol\beta_{p+1},\ldots,\ol\beta_n}
dz^{\alpha_1}\wedge \ldots \wedge dz^{\alpha_p}\wedge
dz^{\ol\beta_{p+1}}\we
\ldots \we dz^{\ol\beta_n} \\
= \psi_{A_p\ol B_{n-p}} dz^{A_p}\we dz^{\ol B_{n-p}} \hspace{5cm}
\end{gather*}
where $A_p=(\alpha_1,\ldots,\alpha_p)$ and $\ol B_{n-p}=(\ol\beta_{p+1},
\ldots,\ol\beta_n)$. The further component of $\psi$ is
$$
\psi_{\alpha_1,\ldots,\alpha_p,\ol\beta_{p+1},\ldots,\ol\beta_{n-1},\ol s}
dz^{\alpha_1}\wedge \ldots \wedge dz^{\alpha_p} \we dz^{\ol\beta_{p+1}}\we
\ldots \we dz^{\ol\beta_{n-1}}\we \ol{ds}.
$$
Now Lemma~\ref{le:dolb} implies
\begin{equation}
\psi_{\alpha_1,\ldots,\alpha_p,\ol \beta_{p+1}, \ldots, \ol\beta_n |\ol s}
= \sum_{j=p+1}^n (-1)^{n-j}
\psi_{\alpha_1,\ldots,\alpha_p,\ol \beta_{p+1}, \ldots, \wh{\ol\beta}_j, \ldots, \ol\beta_n ,\ol s|\ol\beta_j }.
\end{equation}
Since these are the coefficients of alternating forms, on the right-hand
side, we may also take the covariant derivatives with respect to the given
structure on the fibers
$$
\psi_{\alpha_1,\ldots,\alpha_p,\ol \beta_{p+1},
\ldots, \wh{\ol\beta}_j, \ldots, \ol\beta_n ,\ol s;\ol\beta_j}.
$$
\subsection{Cup-Product}
We define the cup-product of a differential form with values in the
relative holomorphic tangent bundle and an (line bundle valued)
differential form now in terns of local coordinates.
\begin{definition}\label{de:cup}
Let
$$
\mu= \mu^\sigma_{\alpha_1,\ldots,\alpha_p,\ol\beta_1,\ldots, \ol\beta_q}\pt_\sigma \,
dz^{\alpha_1}\we\ldots\we dz^{\alpha_p}\we dz^{\ol\beta_1}\we\ldots\we dz^{\ol\beta_q},
$$
and
$$
\nu= \nu_{\gamma_1,\ldots,\gamma_a,\ol\delta_1,\ldots,\ol\delta_b}
dz^{\gamma_1}\we\ldots\we dz^{\gamma_a}\we dz^{\ol\delta_1}
\we\ldots\we dz^\ol{\delta_b}
$$
Then
\begin{gather}\label{eq:cup}
\mu\cup\nu := \mu^\sigma_{\alpha_1,\ldots,\alpha_p,\ol\beta_1,\ldots,
\ol\beta_q}
\nu_{\sigma\gamma_2,\ldots,\gamma_a,\ol\delta_1,\ldots,\ol\delta_b}
dz^{\alpha_1}\we\ldots\we dz^{\alpha_p}  \\
\nonumber \hspace{3cm}\we dz^{\ol\beta_1}\we\ldots\we dz^{\ol\beta_q}\we
dz^{\gamma_2}\we\ldots\we dz^{\gamma_q}\we dz^{\ol\delta_1} \we\ldots\we
dz^\ol{\delta_b}
\end{gather}
\end{definition}

\subsection{Lie derivatives}
Let again the base be smooth, $\dim S=1$ with local coordinate $s$. Then
the induced metric on \RP is given by  \eqref{eq:inpro}, where the
pointwise inner product equals
$$
\psi^k \cdot \psi^\ol\ell g\, dV = (\ii)^n (-1)^{n(n-p)} \frac{1}{g^m} \psi^k \we \psi^\ol\ell,
$$
and where $1/g^m$ stands for the hermitian metric on the $m$-th canonical
bundle on the fibers.
\begin{lemma}\label{le:Lieder}
$$
\frac{\pt}{\pt s} H^{\ol\ell k} = \int_{\cX_s}
L_v(\psi^k \cdot \psi^\ol\ell) g\, dV = \langle L_v \psi^k, \psi^\ell  \rangle + \langle  \psi^k, L_\ol v  \psi^\ell \rangle ,
$$
where $L_v$ denotes the Lie derivative with respect to the canonical lift
$v$ of the coordinate vector field $\pt/\pt s$.
\end{lemma}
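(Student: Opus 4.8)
The plan is to compute $\frac{\pt}{\pt s} H^{\ol\ell k}$ directly from the definition \eqref{eq:inpro}, namely $H^{\ol\ell k} = \int_{\cX_s}\psi^k\cdot\psi^\ol\ell\, g\, dV$, by applying Lemma~\ref{le:intLie} with the horizontal lift $v = v_s = \pt_s + a_s^\alpha\pt_\alpha$ of the coordinate field $\pt/\pt s$ as the chosen vector field whose projection to $S$ is $\pt/\pt s$. This immediately gives $\frac{\pt}{\pt s}H^{\ol\ell k} = \int_{\cX_s} L_v(\psi^k\cdot\psi^\ol\ell\, g\, dV)$, so the first equality of the lemma reduces to checking that the volume form contributions cancel correctly, i.e.\ that one may pull the $g\, dV$ outside and write $L_v(\psi^k\cdot\psi^\ol\ell\, g\, dV) = L_v(\psi^k\cdot\psi^\ol\ell)\, g\, dV$ as an identity of top-degree forms integrated over the fiber. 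Using the expression $\psi^k\cdot\psi^\ol\ell\, g\, dV = (\ii)^n(-1)^{n(n-p)}\frac{1}{g^m}\psi^k\we\psi^\ol\ell$ recorded just before the lemma, the Lie derivative acts on the wedge product of the two $\ol\pt$-closed forms $\psi^k$, $\psi^\ol\ell$ and on the scalar weight $1/g^m$; the point is that $L_v$ is a derivation on forms, and the factor $(\ii)^n(-1)^{n(n-p)}$ is constant.

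Next I would establish the second equality, that $\int_{\cX_s} L_v(\psi^k\cdot\psi^\ol\ell)\, g\, dV = \langle L_v\psi^k,\psi^\ell\rangle + \langle\psi^k, L_\ol v\psi^\ell\rangle$. The Leibniz rule for $L_v$ applied to the product form gives a sum of a term where $L_v$ hits $\psi^k$ and a term where it hits $\psi^\ol\ell$ (together with a term where $L_v$ or its interaction with the conjugate structure produces $L_\ol v\psi^\ell$); here one uses that $v+\ol v$ is the real lift of $\mathrm{Re}(\pt/\pt s)$ and that the Lie derivative along a real field is real, so that the conjugate factor $\psi^\ol\ell = \ol{\psi^\ell}$ is differentiated by $L_{\ol v}$. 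The key structural input is that the pointwise inner product pairing on $\cA^{0,n-p}(\cX_s,\Omega^p_{\cX_s}(\cK^{\otimes m}_{\cX_s}))$, integrated over the fiber, is exactly the fiber integral of the wedge product against the conjugate with the metric weight, so that once the Leibniz expansion is written out, the two surviving pieces are by definition $\langle L_v\psi^k,\psi^\ell\rangle$ and $\langle\psi^k, L_{\ol v}\psi^\ell\rangle$.

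The main obstacle I expect is bookkeeping the metric weight $1/g^m$ correctly under $L_v$: the Lie derivative of $1/g^m$ along $v$ is not zero (it involves $\pt_s\log g$ and $a_s^\alpha\pt_\alpha\log g$), so one must be careful that these contributions are precisely what is needed to convert the ``naive'' derivative of the integrand into the covariant/hermitian pairing of the Lie derivatives of $\psi^k$ and $\psi^\ell$ as sections of the Dolbeault complex with values in the $m$-canonical bundle. In other words, the Lie derivative $L_v$ of a bundle-valued form must be interpreted as the induced derivation on $\cA^{0,n-p}(\cX,\Omega^p_{\cX/S}(\cK^{\otimes m}_{\cX/S}))$, and the claim is that the geometric Lie derivative of the scalar form $\psi^k\we\psi^\ol\ell/g^m$ matches the pairing of these bundle Lie derivatives. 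Concretely this amounts to verifying that the ``connection part'' absorbed into $L_v$ on each factor, when added, reproduces exactly the $L_v(1/g^m)$ term; this is a routine but delicate computation in local coordinates using $\omega_\cX = \ddb\log g$, and I would treat it as the technical heart of the lemma while leaving the index manipulations to the reader, as the surrounding text does.
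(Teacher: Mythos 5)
Your overall strategy coincides with the paper's: differentiate $H^{\ol\ell k}$ under the integral via Lemma~\ref{le:intLie} with the horizontal lift $v$, then distribute the derivative onto the two factors $\psi^k$ and $\psi^{\ol\ell}$ by the Leibniz rule. But the entire content of the paper's proof is exactly the step you defer as ``routine but delicate \dots\ left to the reader'': the verification that the metric weight contributes nothing. The paper settles this with a single computation that your sketch never touches. Since $v$ is the \emph{horizontal} lift, $a_{s\ol\beta}=g_{\gamma\ol\beta}a^\gamma_s=-g_{s\ol\beta}$, and hence the $(1,1)$-component of the Lie derivative of the fiber metric vanishes: $L_v(g_{\alpha\ol\beta})=g_{\alpha\ol\beta|s}+a^\gamma_s g_{\alpha\ol\beta;\gamma}+a^\gamma_{s;\alpha}g_{\gamma\ol\beta}=-a_{s\ol\beta;\alpha}+a^\gamma_{s;\alpha}g_{\gamma\ol\beta}=0$, so $L_v(\det(g_{\alpha\ol\beta}))=0$. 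This one identity, which uses precisely the defining property $a^\alpha_s=-g^{\ol\beta\alpha}g_{s\ol\beta}$ of the canonical lift (Lemma~\ref{le:canlift}), kills all metric contributions at once: both the volume factor $g\,dV$ and the weight $1/g^m$ are $L_v$-invariant in the tensorial sense, so the derivative passes to $\psi^k$ and $\psi^{\ol\ell}$ and yields $\langle L_v\psi^k,\psi^\ell\rangle+\langle\psi^k,L_{\ol v}\psi^\ell\rangle$. It is also what legitimizes the first equality of the lemma, i.e.\ pulling $g\,dV$ outside of $L_v$; for a non-horizontal lift one would pick up the extra term $\int\psi^k\cdot\psi^{\ol\ell}\,L_w(g\,dV)$, so this is not optional bookkeeping.

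Moreover, your description of the difficulty points in a slightly wrong direction: you assert that $L_v(1/g^m)\neq 0$ (true as a statement about the scalar derivative, which equals $-m\,g^{-m}$ times $v(\log g)$, and $v(\log g)$ is in general minus the divergence of $a^\alpha_s$, not zero) and that it must be ``absorbed'' by connection parts on the two factors, a cancellation you would then still have to exhibit. The mechanism of the paper's proof is not such a cancellation but the vanishing of the tensorial Lie derivative of the fiber metric recorded above, which nowhere appears in your argument. As written, the proposal correctly locates the crux but asserts its conclusion without performing the computation, so the proof is incomplete at exactly the point where the lemma's content lies.
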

\begin{proof}
Taking the Lie derivative is not type-preserving. We need the
$(1,1)$-component: $L_v(g_{\alpha\ol\beta})= \big[ \pt_s + a^\alpha_s
\pt_\alpha , g_{\alpha,\ol\beta} \big]_{\alpha\ol\beta} =
g_{\alpha\ol\beta|s} + a^\gamma_{s}g_{\alpha\ol\beta;\gamma}+
a^\gamma_{s;\alpha}g_{\gamma\ol\beta} =-
a_{s\ol\beta;\alpha}+a^\gamma_{s;\alpha}g_{\gamma\ol\beta}=0 $. So
$L_v(det(g_{\alpha\ol\beta}))=0$.
\end{proof}
\begin{equation}
L_v\psi = L_v\psi' + L_v\psi'',
\end{equation}
where $L_v\psi'$ is of type $(p,n-p)$ and $L_v\psi''$ is of type
$(p-1,n-p+1)$. We have
\begin{eqnarray}
L_v\psi' &=&   \big[\pt_s + a^\alpha_s \pt_\alpha, \psi_{A_p\ol B_{n-p}} dz^{A_p } dz^{\ol B_{n-q}}\big]_{(p,n-p)}
\nonumber   \\
&=& (\psi_{;s} + a^\alpha_s \psi_{;\alpha} + \sum_{j=1}^p a^\alpha_{s;\alpha_j}
\psi_{
{\tiny\vtop{
\hbox{$\alpha_1,\ldots,\alpha,\ldots,\alpha_p\ol B_{n-p}\;$}\vskip-.8mm
\hbox{$\phantom{\alpha_1,\ldots,}{|\atop j} $}}}}) dz^{A_p}\we dz^{\ol B_{n-p}} \label{eq:lvprime} \\
L_v\psi'' &=&\big[\pt_s + a^\alpha_s \pt_\alpha, \psi_{A_p\ol B_{n-p}} dz^{A_p } dz^{\ol B_{n-q}}\big]_{(p-1,n-p+1)}
\nonumber   \\
&=& \sum^p_{j=1} A^\alpha_{s\ol\beta_p}
\psi^k_{ {\tiny\vtop{ \hbox{$\alpha_1,\ldots,\alpha,\ldots,\alpha_p\ol
B_{n-p}$\;}\vskip-.8mm \hbox{$\phantom{\alpha_1,\ldots,}{|\atop j} $}}}} \nonumber \\
&& \quad
\vtop{\hbox{$dz^{\alpha_1}\we\ldots\we dz^{\ol\beta_p}\we\ldots\we
dz^{\alpha_p} \we dz^{\ol\beta_{p+1}}\we\ldots\we
dz^{\ol\beta_n}$}\hbox{$\phantom{dz^{\alpha_1}\we\ldots\we \; }{|\atop j} $}} \label{eq:lvsecond}
\end{eqnarray}
We also note the values for the derivatives with respect to $\ol v$.
\begin{eqnarray}
L_\ol v\psi' &=&   \big[\pt_\ol s + a^\ol\beta_\ol s \pt_\ol\beta, \psi_{A_p\ol B_{n-p}}
dz^{A_p } dz^{\ol B_{n-q}}\big]_{(p,n-p)} \nonumber   \\
&=& (\psi_{;\ol s} + a^\ol\beta_\ol s \psi_{;\ol\beta} + \sum_{j=1}^p a^\ol\beta_{\ol s;\ol\beta_j}
\psi_{
{\tiny\vtop{
\hbox{$A_p \ol\beta_{p+1},\ldots,\ol\beta,\ldots,{\ol\beta}_n\;$}\vskip-.8mm
\hbox{$\phantom{A_p \ol\beta_{p+1},\ldots,}{|\atop j}$}}}})
dz^{A_p}\we dz^{\ol B_{n-p}} \label{eq:lvbprime} \\
L_\ol v\psi'' &=& \big[\pt_\ol s + a^\ol\beta_\ol s \pt_\ol\beta, \psi_{A_p\ol B_{n-p}} dz^{A_p }
dz^{\ol B_{n-q}}\big]_{(p+1,n-p-1)} \nonumber   \\
&=& \sum^n_{j=p+1} A^\ol\beta_{\ol s \alpha_{p+1}}
\psi^k_{\tiny\vtop{ \hbox{$\alpha_1,\ldots,\alpha_p,\ol\beta_{p+1},\ldots,\ol \beta,\ldots,\ol\beta_n\;$}\vskip-.8mm
\hbox{$\phantom{\alpha_1,\ldots,\alpha_p,\ol\beta_{p+1},\ldots,}{|\atop j}$}}} \nonumber \\ && \quad
\vtop{\hbox{$dz^{\alpha_1}\we\ldots\we dz^{\alpha_p}\we dz^{\ol\beta_1}\we\ldots\we dz^{\alpha_{p+1}}\we\ldots
\we dz^{\ol\beta_n} $}
\hbox{$\phantom{dz^{\alpha_1}\we\ldots\we dz^{\alpha_p}\we dz^{\ol\beta_1}\we\ldots\we dz}{|\atop j}$}} \label{eq:lvbsecond}
\end{eqnarray}
\begin{lemma}
\begin{eqnarray}
  (L_v\psi^k)'' &=& A_s \cup \psi^k\label{eq:2} \\
  (L_\ol v\psi^k)'' &=& (-1)^p A_\ol s \cup \psi^k \label{eq:3}
\end{eqnarray}
\end{lemma}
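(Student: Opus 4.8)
The plan is to derive both identities by matching the formulas \eqref{eq:lvsecond} and \eqref{eq:lvbsecond} for the bidegree-changing parts of the Lie derivatives against Definition~\ref{de:cup} of the cup product, reconciling the orders of the wedge factors and tensor indices by skew-symmetry. Both sides of \eqref{eq:2} and \eqref{eq:3} are computed fiberwise and pointwise on $\cX_s$, so the argument is purely linear algebra; no use is made of the harmonicity of $\psi^k$ or of $\db$-closedness. Note that \eqref{eq:lvsecond} and \eqref{eq:lvbsecond} already present $(L_v\psi^k)''$ and $(L_\ol v\psi^k)''$ in terms of the harmonic \ks tensors (using $a^\alpha_{s;\ol\beta}=A^\alpha_{s\ol\beta}$ on $\cX_s$ and its conjugate $a^\ol\beta_{\ol s;\gamma}=A^\ol\beta_{\ol s\gamma}$), and that the metric weight of $\cK^{\otimes m}_{\cX/S}$ is of pure type and therefore contributes only to $L_v\psi'$ (cf.\ the vanishing computed in the proof of Lemma~\ref{le:Lieder}); so nothing beyond the displayed formulas is needed.

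First I would treat \eqref{eq:2}. By \eqref{eq:lvsecond}, $(L_v\psi^k)''$ is the sum over $j=1,\dots,p$ of the $(p-1,n-p+1)$-form obtained from $\psi^k=\psi^k_{A_p\ol B_{n-p}}\,dz^{A_p}\we dz^{\ol B_{n-p}}$ by replacing the $j$-th factor $dz^{\alpha_j}$ with $A^{\alpha_j}_{s\ol\beta}\,dz^{\ol\beta}$ and contracting $\alpha_j$ into $\psi^k$. On the other hand \eqref{eq:cup}, applied to $\mu=A_s=A^\sigma_{s\ol\beta}\pt_\sigma\,dz^{\ol\beta}$ and $\nu=\psi^k$, gives
$$
A_s\cup\psi^k=A^\sigma_{s\ol\beta}\,\psi^k_{\sigma\alpha_2\dots\alpha_p\ol B_{n-p}}\;dz^{\ol\beta}\we dz^{\alpha_2}\we\dots\we dz^{\alpha_p}\we dz^{\ol B_{n-p}},
$$
which is exactly the $j=1$ summand. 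For the general $j$-th summand, transporting the new factor $dz^{\ol\beta}$ from slot $j$ to the front costs $(-1)^{j-1}$, and bringing the contracted index into first position in $\psi^k$ costs another $(-1)^{j-1}$ by skew-symmetry; the signs cancel, so every summand equals $A_s\cup\psi^k$. Up to the combinatorial normalization fixed at the start of this section this yields $(L_v\psi^k)''=A_s\cup\psi^k$, i.e.\ \eqref{eq:2}.

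Next I would treat \eqref{eq:3} the same way. By \eqref{eq:lvbsecond}, $(L_\ol v\psi^k)''$ is the sum over $j=p+1,\dots,n$ of the $(p+1,n-p-1)$-form obtained by replacing $dz^{\ol\beta_j}$ with $A^{\ol\beta_j}_{\ol s\gamma}\,dz^\gamma$ and contracting $\ol\beta_j$ into $\psi^k$; as before the summands agree after reordering, the interchanges inside the antiholomorphic block being compensated by skew-symmetry of $\psi^k$. Now \eqref{eq:cup}, applied to $\mu=A_\ol s=A^\ol\sigma_{\ol s\gamma}\pt_\ol\sigma\,dz^\gamma$ and $\nu=\psi^k$, places the new \emph{holomorphic} factor $dz^\gamma$ at the very front of the wedge product, whereas in $(L_\ol v\psi^k)''$ it occupies the slot vacated by $dz^{\ol\beta_j}$, i.e.\ just after $dz^{\alpha_1}\we\dots\we dz^{\alpha_p}$; moving it past these $p$ holomorphic factors produces the extra sign $(-1)^p$. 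Hence $(L_\ol v\psi^k)''=(-1)^p\,A_\ol s\cup\psi^k$, which is \eqref{eq:3}.

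I expect the only delicate point to be the bookkeeping: making sure that no other term reaches the $''$-component — neither the Lie derivative of the coefficient function, nor the type-preserving parts of $L_v dz^{\alpha_j}$ and $L_\ol v dz^{\ol\beta_j}$, nor the $\cK^{\otimes m}_{\cX/S}$-weight (all of pure type $(0,0)$, hence absorbed into $L_v\psi'$) — and keeping the permutation signs and the suppressed $1/p!\,(n-p)!$-type factors consistent with the conventions already in force. There is no analytic content.
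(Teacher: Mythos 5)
Your argument is essentially the paper's own proof: the paper also verifies \eqref{eq:2} by rewriting \eqref{eq:lvsecond}, shuffling the contracted index and the new $dz^{\ol\beta}$ with cancelling signs from the skew-symmetry of $\psi^k$ until Definition~\ref{de:cup} is recognized (the factor $p$ from the sum over $j$ being absorbed in the suppressed $1/p!\,q!$ normalization), and disposes of \eqref{eq:3} as ``similar, from \eqref{eq:lvbsecond}'', where your explicit count of the $p$ holomorphic factors giving $(-1)^p$ is exactly the intended bookkeeping. So the proposal is correct and follows the same route, with the sign analysis spelled out slightly more explicitly than in the paper.
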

\begin{proof}[Proof of \eqref{eq:2}.]
By \eqref{eq:lvsecond} we have
\begin{gather*}
L_v\psi'' = \hspace{10cm}\\ =\sum^p_{j=1}
A^\alpha_{s\ol\beta_p}\psi^k_{\alpha_1,\ldots,
\wh\alpha_j,\ldots,\alpha_p,\alpha, \ol B_{n-p}} dz^{\alpha_1}\we
\ldots\we \wh{dz^{\alpha_j}}\we\ldots \we dz^{\alpha_p} \we
dz^{\ol\beta_p}\we
\ldots\we dz^{\ol\beta_n}\\
= (-1)^{p-1}\sum^p_{j=1}
A^\alpha_{s\ol\beta_p}\psi^k_{\alpha,\alpha_1,\ldots,\alpha_{p-1},\ol\beta_{p},\ldots,\ol\beta_{n}}
dz^{\alpha_1}\we\ldots\we\ldots \we dz^{\alpha_{p-1}} \we
dz^{\ol\beta_p}\we \ldots\we dz^{\ol\beta_n}.
\end{gather*}
\end{proof}
\begin{proof}[Proof of \eqref{eq:3}]
The claim follows in a similar way from \eqref{eq:lvbsecond}.
\end{proof}

The situation is not quite symmetric because of Lemma~\ref{le:dolb}, which
implies that the contraction of the global $(0,n-p)$-form $\psi$ with
values in $\Omega^p_{\cX/S}(\cK_{\cX/S})$ is well-defined. Like in
Definition~\ref{de:cup} we have a cup-product on the total space
(restricted to the fibers).
\begin{eqnarray*}
\ol v \cup \psi &=& (\pt_\ol s + a^\ol\beta_\ol s \pt_\ol\beta)  \cup \psi \\
&=&\psi_{A_p,\ol s, \ol \beta_{p+1},\ldots,\ol\beta_{n-1}} +
a^\ol\beta_\ol s \psi_{A_p,\ol\beta,\ol\beta_{p+1},\ldots,\ol \beta_{n-1} }
\end{eqnarray*}
\begin{lemma}\label{le:lvpsi1}
\begin{equation}\label{eq:lvpsi1}
L_\ol v\psi'= (-1)^p\ol\pt (\ol v \cup \psi).
\end{equation}
\end{lemma}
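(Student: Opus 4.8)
The identity is the conjugate-direction, infinitesimal form of Cartan's formula: morally $L_{\ol v}\psi=\iota_{\ol v}(d\psi)+d(\iota_{\ol v}\psi)$, and since $\psi$ is $\ol\pt$-closed \emph{on the total space} by Lemma~\ref{le:dolb}, the only piece that can feed into the component of fibre type $(p,n-p)$ is $d(\iota_{\ol v}\psi)$, where it appears as $\ol\pt$ of a contraction. So the plan is: (1) identify $\iota_{\ol v}\psi$ with $(-1)^p(\ol v\cup\psi)$; (2) extract from $d(\iota_{\ol v}\psi)$ the summand of fibre type $(p,n-p)$ carrying no base differentials and recognise it as $(-1)^p\ol\pt(\ol v\cup\psi)$; (3) check that $\iota_{\ol v}(d\psi)$ contributes nothing in that type, using $\ol\pt_\cX\psi=0$. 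To keep the bundle coefficients out of the way one works in a local holomorphic frame of $\Omega^p_{\cX/S}(\cK_{\cX/S}^{\otimes m})$, so that $\psi$ is a scalar-valued form and $\ol\pt$ is the ordinary Dolbeault operator; then $d\psi=\pt_\cX\psi$ (here $\pt_\cX$ includes the base term $ds\,\pt_s$), and every summand of $\iota_{\ol v}(\pt_\cX\psi)$ necessarily retains either a holomorphic covector $dz^\gamma$, raising the fibre degree to $p+1$, or the base differential $ds$; hence it drops out of the $(p,n-p)$-part, which proves step (3).

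Closer to the coordinate computations already in place, I would simply expand both sides. On $\cX$ the form $\psi$ has the two components $\psi_{A_p\ol B_{n-p}}dz^{A_p}\we dz^{\ol B_{n-p}}$ and $\psi_{A_p\ol B_{n-p-1}\ol s}\,dz^{A_p}\we dz^{\ol B_{n-p-1}}\we d\ol s$. Contracting with $\ol v=\pt_\ol s+a^\ol\beta_\ol s\pt_\ol\beta$ and moving $\ol v$ past the $p$ holomorphic covectors $dz^{A_p}$ gives, after a short sign count (the factor picked up pulling $\iota_{\pt_\ol s}$ to the last slot $d\ol s$ and the factor from reordering $\ol s$ into the slot used in the definition of $\cup$ combine to $(-1)^p$),
$$
\iota_{\ol v}\psi=(-1)^p\bigl(\ol v\cup\psi\bigr)=(-1)^p\bigl(\psi_{A_p,\ol s,\ol\beta_{p+1},\ldots,\ol\beta_{n-1}}+a^\ol\beta_\ol s\,\psi_{A_p,\ol\beta,\ol\beta_{p+1},\ldots,\ol\beta_{n-1}}\bigr)dz^{A_p}\we dz^{\ol\beta_{p+1}}\we\ldots\we dz^{\ol\beta_{n-1}}.
$$
Applying $\ol\pt$ and using the product rule produces precisely three kinds of terms, $a^\ol\beta_\ol s\psi_{\cdots;\ol\gamma}$, $a^\ol\beta_{\ol s;\ol\gamma}\psi_{\cdots}$ and $\psi_{A_p,\ol s,\cdots;\ol\gamma}$, which I would then match term by term against the formula \eqref{eq:lvbprime} for $L_\ol v\psi'$, namely $(\psi_{;\ol s}+a^\ol\beta_\ol s\psi_{;\ol\beta}+\sum_j a^\ol\beta_{\ol s;\ol\beta_j}\psi_{\cdots})dz^{A_p}\we dz^{\ol B_{n-p}}$. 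Covariant and ordinary derivatives may be interchanged freely here since all coefficients occur inside alternating forms.

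The one genuinely non-formal step is matching the term $\psi_{A_p\ol B_{n-p};\ol s}$ of \eqref{eq:lvbprime} with what $\ol\pt$ produces from the $\psi_{A_p,\ol s,\cdots}$ part of $\ol v\cup\psi$: these agree exactly by the $\ol\pt$-closedness relation
$$
\psi_{\alpha_1,\ldots,\alpha_p,\ol\beta_{p+1},\ldots,\ol\beta_n|\ol s}=\sum_{j=p+1}^n(-1)^{n-j}\psi_{\alpha_1,\ldots,\alpha_p,\ol\beta_{p+1},\ldots,\wh{\ol\beta}_j,\ldots,\ol\beta_n,\ol s|\ol\beta_j},
$$
i.e.\ by $\ol\pt_\cX\psi=0$ (Lemma~\ref{le:dolb}); this is the one and only place where \emph{global} $\ol\pt$-closedness on $\cX$, rather than mere fibrewise harmonicity of the $\psi_s$, is used, and it is what rules out an extra term carrying the base direction. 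The remaining work — collecting the individual $\pm$ signs so that the net factor is the single $(-1)^p$ in the statement, and, in the conceptual version, the bidegree bookkeeping for $\iota_{\ol v}(\pt_\cX\psi)$ — is routine but must be carried out in strict accordance with Definition~\ref{de:cup} and the index conventions fixed at the start of this section.
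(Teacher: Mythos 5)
Your argument is correct and is essentially the paper's own proof, fleshed out: the paper likewise deduces \eqref{eq:lvpsi1} from the fact that, by Lemma~\ref{le:dolb}, $\psi$ is a $\ol\pt$-closed $(0,n-p)$-form on the total space $\cX$ with values in a holomorphic bundle, which is exactly your Cartan-formula/coordinate matching via the relation $\psi_{A_p\ol B_{n-p}|\ol s}=\sum_j(-1)^{n-j}\psi_{\ldots\wh{\ol\beta}_j\ldots,\ol s|\ol\beta_j}$. The extra detail you supply (type bookkeeping for $\iota_{\ol v}(\pt_\cX\psi)$ and the $(-1)^p$ sign count) is consistent with Definition~\ref{de:cup} and \eqref{eq:lvbprime}.
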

\begin{proof}
The proof follows from the fact that, according to Lemma~\ref{le:dolb},
$\psi$ is given by a $\ol\pt$-closed $(0,n-p)$-form on the total space
$\cX$ with values in a certain holomorphic vector bundle.
\end{proof}

We will need that the forms $\psi$ on the fibers are {\em also harmonic
with respect to $\pt$} (which was defined as the connection of the line
bundle $\cK^{\otimes m}_{\cX/S}$). First, we note the following fact,
which immediately follows from the fact that the curvature of
$(\cK_{\cX/S},g^{-1})$ equals $-\omega_\cX$. We will need this fact for
both the total space and the restriction to fibers.
\begin{lemma}\label{le:ddb}
\begin{equation}\label{eq:ddb}
 \ii [\ol\pt, \pt] = - m L_\cX,
\end{equation}
where $L_\cX$ denotes the multiplication with $\omega_{\cX}$.
\end{lemma}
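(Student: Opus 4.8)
The statement amounts to identifying $\sqrt{-1}\,[\db,\pt]$ with the Chern curvature operator of the Hermitian holomorphic line bundle $(\cK_{\cX/S}^{\otimes m},g^{-m})$ and then evaluating that curvature with the help of the \ke normalization fixed in Section~\ref{se:posi}. So the plan is: first write $[\db,\pt]$ as the square of the Chern connection; then compute the curvature of $(\cK_{\cX/S}^{\otimes m},g^{-m})$ from that of $(\cK_{\cX/S},g^{-1})$; and finally recognize wedging with $-m\,\omega_\cX$ as $-m\,L_\cX$.

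I would let $D=\pt+\db$ be the Chern connection of $(\cK_{\cX/S}^{\otimes m},g^{-m})$, with $\db$ the Dolbeault operator of the holomorphic structure and $\pt=D^{(1,0)}$ its $(1,0)$-part --- this is exactly the operator denoted $\pt$ throughout Section~5. Since the bundle has rank one, the $(2,0)$- and $(0,2)$-parts of $D^2$ vanish, so
$$
D^2 \;=\; \pt\db+\db\pt \;=\; [\db,\pt],
$$
the graded commutator of the two odd operators; being $\cO_\cX$-linear of bidegree $(1,1)$, it is wedge multiplication by the (scalar-valued, since the bundle is a line bundle) curvature $(1,1)$-form. The identical computation, carried out after restriction to a fiber $\cX_s$, i.e.\ for $(\cK_{\cX/S}|_{\cX_s})^{\otimes m}=\cK_{\cX_s}^{\otimes m}$ with metric $g^{-m}|_{\cX_s}$, gives the fiberwise version --- so both assertions of the Lemma (on $\cX$ and on the fibers) come out at once.

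It then remains to evaluate the curvature form. Chern curvature is additive under tensor powers, so it equals $m$ times the Chern curvature of $(\cK_{\cX/S},g^{-1})$, which, as recalled immediately above the Lemma, is $-\omega_\cX$; hence $\sqrt{-1}\,[\db,\pt]$ is wedge multiplication by $-m\,\omega_\cX$, i.e.\ the operator $-m\,L_\cX$, which is \eqref{eq:ddb}, and restricting to $\cX_s$ gives the same formula with $\omega_\cX$ replaced by $\omega_{\cX_s}=\omega_\cX|_{\cX_s}$. The only point I would carry out with care --- the main, if modest, obstacle --- is the bookkeeping needed to make this precise: in the natural frame, where the metric on $\cK_{\cX/S}^{\otimes m}$ is $g^{-m}$, the connection form is $-m\,\pt\log g$ and the curvature is $-m\,\db\pt\log g$, and one has to combine $\db\pt=-\pt\db$, $\omega_\cX=\ddb\log g$, the chosen normalization of the factor $\sqrt{-1}$, and the sign convention in the graded bracket $[\db,\pt]$ so that the outcome is exactly $-m\,L_\cX$ as stated. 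Everything else is formal.
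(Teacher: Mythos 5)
Your route is the paper's own: the paper disposes of this lemma in a single sentence, saying it follows immediately from the fact that the curvature of $(\cK_{\cX/S},g^{-1})$ equals $-\omega_\cX$, which is precisely your identification of $\ii[\ol\pt,\pt]$ with the Chern curvature operator of $(\cK_{\cX/S}^{\otimes m},g^{-m})$, plus additivity under tensor powers and restriction to the fibers. Conceptually there is nothing to add.

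The one step you explicitly postpone --- the sign bookkeeping --- is, however, exactly the step that does not come out ``as stated'' if you carry it out with the conventions you quote: with the metric $g^{-m}$ the connection form is $-m\,\pt\log g$, the curvature is $\ol\pt(-m\,\pt\log g)=+m\,\pt\ol\pt\log g$, and since $\omega_\cX=\ii\pt\ol\pt\log g$ this gives $\ii[\ol\pt,\pt]=+m\,L_\cX$, not $-m\,L_\cX$. The positive sign is also what the fiberwise positivity of $\cK_{\cX/S}$ (Theorem~\ref{th:main}) forces, and it is the sign actually used later: the proof of Lemma~\ref{le:boxdboxdb} invokes the identity in the form $\ii(\pt\ol\pt+\ol\pt\pt)=m\,\omega_\cX$. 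So the minus sign in \eqref{eq:ddb} (and in the sentence preceding it) appears to be a sign convention/slip internal to the paper rather than something your computation can recover; your argument is the right one, but asserting that the ``chosen normalization'' and the ``sign convention in the graded bracket'' will combine to give $-m\,L_\cX$ papers over this discrepancy instead of resolving it, and an honest completion of your own frame computation would have exposed it.
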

Now:
\begin{lemma}\label{le:boxdboxdb}
The following equation holds on  $\cA^{(p,q)}(\cK^{\otimes m}_{\cX_s})$.
\begin{equation}
\Box_\pt = \Box_\ol\pt + m\cdot  (n-p-q) \cdot id.
\end{equation}
In particular, the harmonic forms $\psi \in \cA^{(p,n-q)}(\cK_{\cX_s})$
are also harmonic with respect to $\pt$.
\end{lemma}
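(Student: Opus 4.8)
The plan is to deduce the identity from the Bochner--Kodaira--Nakano formula for the hermitian holomorphic line bundle $(\cK^{\otimes m}_{\cX_s},g^{-m})$ over the \ke fiber $(\cX_s,\omega_{\cX_s})$, in which $\pt$ denotes the $(1,0)$-part of the Chern connection. First I would record that, $\cX_s$ being \ka, the \ka identities persist in this twisted setting: with the appropriate sign convention, $\pt^{*}=\ii\,[\db,\Lambda]$ and $\dbs=-\ii\,[\pt,\Lambda]$, where $\Lambda$ is the adjoint of the Lefschetz operator $L=L_{\cX_s}$ (multiplication with $\omega_{\cX_s}$). These are proved by the same purely formal manipulation as in the untwisted case; the only change is that $\pt$ now carries the bundle connection, and no curvature term enters the identities themselves.

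Next I would substitute these into $\Box_\pt=\pt\pt^{*}+\pt^{*}\pt$ and $\Box_\db=\db\dbs+\dbs\db$ and run the usual cancellations. All the ``mixed'' terms drop out, leaving
$$
\Box_\pt-\Box_\db=\big[\,\ii\,[\db,\pt]\,,\,\Lambda\,\big],
$$
that is, the commutator of the curvature operator $\ii[\db,\pt]$ of the line bundle with $\Lambda$.

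Now Lemma~\ref{le:ddb} identifies $\ii[\db,\pt]=-m\,L_\cX$; restricted to the fiber this reads $\ii[\db,\pt]=-m\,L$, so that
$$
\Box_\pt-\Box_\db=-m\,[L,\Lambda].
$$
Since $[L,\Lambda]$ acts on $(p,q)$-forms on the $n$-dimensional \ka manifold $\cX_s$ as multiplication by $(p+q-n)$, this becomes $\Box_\pt-\Box_\db=m\,(n-p-q)\,\mathrm{id}$, which is the assertion. The ``in particular'' statement is then immediate: a form $\psi$ of total fiber degree $p+q=n$ with values in $\cK^{\otimes m}_{\cX_s}$ has vanishing correction term, so $\Box_\pt=\Box_\db$ on such forms and every $\db$-harmonic form of this type is also $\pt$-harmonic; this is what is needed below in the curvature computation.

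The only genuinely delicate point is the sign bookkeeping: the conventions for the Chern curvature, for $\Lambda$, and for the sign in the \ka identities must be fixed compatibly so that the curvature sign coming from Lemma~\ref{le:ddb} combines with $[L,\Lambda]=(p+q-n)\,\mathrm{id}$ to yield precisely $+m(n-p-q)$ and not its negative. I would pin this down on the trivial test case $n=1$, $p=q=0$, $m=1$, where $[L,\Lambda]=-\,\mathrm{id}$ on functions and the formula predicts $\Box_\pt-\Box_\db=\mathrm{id}$. As an alternative that avoids the \ka identities altogether, one can compare $\Box_\pt$ and $\Box_\db$ directly in fiber normal coordinates, commuting the covariant derivatives and absorbing the curvature of $\cK^{\otimes m}_{\cX_s}$, equal to $-m\,\omega_{\cX_s}$; a Ricci-type contraction then produces the term $m(n-p-q)\,\mathrm{id}$. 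I would prefer the \ka-identities route, as it is shorter and displays the structure transparently.
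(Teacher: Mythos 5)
Your argument is correct and is essentially the paper's own proof: both deduce the identity from the twisted K\"ahler--Nakano commutation relations, writing $\Box_\pt-\Box_\db$ as the commutator of $\Lambda$ with the curvature operator of $\cK^{\otimes m}_{\cX_s}$, then invoke Lemma~\ref{le:ddb} together with the fact that $[L,\Lambda]$ acts on $(p,q)$-forms as $(p+q-n)\,\mathrm{id}$; the degree-$n$ consequence is read off the same way. The only caveat is the sign bookkeeping you yourself flag (your stated K\"ahler identities carry the opposite sign from the paper's), but since the conventions can be fixed consistently and your test case pins them down, this is a matter of normalization, not a gap.
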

\begin{proof}
We use the formulas
$$
\ii\, \ol\pt^* = [\Lambda,\pt] \text{\quad and \quad} -\ii \pt^* = [\Lambda,\ol \pt],
$$
where $\Lambda$ denotes the adjoint operator to $L$. Then
$$
\Box_\pt-\Box_\ol\pt = [\Lambda,\ii (\pt \ol\pt + \ol\pt \pt)]=[\Lambda,m\cdot \omega_\cX] =m\cdot(n-p-q)\cdot id.
$$
\end{proof}
Now we compute the curvature in the following way. Because of
\eqref{eq:lvpsi1}
$$
\langle \psi^k, L_\ol v (\psi^\ell)'  \rangle = 0
$$
holds for all $s\in S$ so that by Lemma~\ref{le:Lieder}
\begin{gather*}
\frac{\pt}{\pt s} H^{\ol\ell k} = \langle L_v \psi^k, \psi^\ell  \rangle +
\langle  \psi^k,  L_\ol v \psi^\ell \rangle 
=\langle (L_v \psi^k)', \psi^\ell \rangle + \langle \psi^k, (L_\ol v
\psi^\ell)' \rangle \\
=\langle (L_v \psi^k)', \psi^\ell \rangle.
\end{gather*}

Later in the computation we will use normal coordinates (of the second
kind) at a given point $s_0\in S$. The condition $(\pt/\pt s)H^{\ol\ell
k}|_{s_0}=0$ for all $k,\ell$ means that for $s=s_0$ the harmonic
projection
\begin{equation}\label{eq:HLv}
H((L_v \psi^k)') =0
\end{equation}
vanishes for all $k$.

In order to compute the second order derivative of $H^{\ol\ell k}$ we
begin with
\begin{equation}\label{eq:dsH}
\frac{\pt}{\pt s} H^{\ol\ell k} = \langle L_v \psi^k, \psi^\ell \rangle.
\end{equation}
which contains both $(L_v \psi^k)'$ and $(L_v \psi^k)''$. Now
\begin{gather*}
\pt_\ol s\pt_s \langle \psi^k,\psi^\ell \rangle = \langle L_\ol v L_v
\psi^k , \psi^\ell \rangle +\langle L_v \psi^k,L_v\psi^\ell \rangle   \\ =
\langle L_{[\ol v,v]}\psi^k , \psi^\ell\rangle + \langle  L_vL_\ol
v\psi^k,\psi^\ell \rangle + \langle L_v \psi^k,L_v\psi^\ell \rangle \\ =
\langle L_{[\ol v,v]}\psi^k , \psi^\ell\rangle + \pt_s\langle L_\ol v
\psi^k, \psi^\ell\rangle -\langle L_\ol v\psi^k,L_\ol v\psi^\ell\rangle +
\langle L_v\psi^k,L_v\psi^\ell\rangle
\end{gather*}
We just saw that $\langle L_\ol v\psi^k , \psi^\ell \rangle \equiv 0$.
Hence for all $s\in S$
\begin{equation}\label{eq:Lolvv}
\pt_\ol s\pt_s \langle \psi^k,\psi^\ell \rangle = \langle L_{[\ol
v,v]}\psi^k , \psi^\ell\rangle -\langle L_\ol v\psi^k,L_\ol
v\psi^\ell\rangle + \langle L_v\psi^k,L_v\psi^\ell\rangle
\end{equation}
The fact that we are computing Lie-derivatives of $n$-forms (with values
in some line bundle) implies that
$$
\langle L_v\psi^k,L_v\psi^\ell\rangle =  \langle (L_v\psi^k)',(L_v\psi^\ell)'\rangle
-  \langle (L_v\psi^k)'',(L_v\psi^\ell)''\rangle,
$$
and
$$
\langle L_\ol v\psi^k,L_\ol v\psi^\ell\rangle =  \langle (L_\ol v\psi^k)',(L_\ol v\psi^\ell)'\rangle
-  \langle (L_\ol v\psi^k)'',(L_\ol v\psi^\ell)''\rangle.
$$
\begin{lemma}\label{le:vvb}
Restricted to the fibers $\cX_s$ the following equation holds for $L_{[\ol
v, v]}$ applied to $\cK^{\otimes m}_{\cX/S}$-valued functions and
differential forms resp.
    \begin{equation}\label{eq:vvb}
      L_{[\ol v, v]} =
      \big[-\varphi^{;\alpha}\pt_\alpha + \varphi^{;\ol\beta}\pt_{\ol\beta},\; \textvisiblespace \;\big]
      - m\cdot \varphi \cdot id
    \end{equation}
\end{lemma}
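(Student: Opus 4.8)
The plan is to compute the commutator $[\ol v, v]$ of the horizontal lift $v = \pt_s + a_s^\alpha\pt_\alpha$ and its conjugate $\ol v = \pt_\ol s + a_\ol s^\ol\beta\pt_\ol\beta$ directly in local coordinates, and then recognize the coefficients in terms of $\varphi$ and its covariant derivatives using Lemma~\ref{le:varphi_0}. The commutator $[\ol v, v]$ is an honest differentiable vector field on $\cX$ whose projection to $S$ vanishes, so it is vertical: one gets
$$
[\ol v, v] = \left( \pt_\ol s a_s^\alpha + a_\ol s^\ol\beta \pt_\ol\beta a_s^\alpha - a_s^\gamma\pt_\gamma a_\ol s^{\ol{?}} \ldots \right)\pt_\alpha + (\text{conjugate terms}),
$$
and the point is that the coefficient of $\pt_\alpha$ should turn out to be $-\varphi^{;\alpha}$ (and symmetrically $\varphi^{;\ol\beta}$ for the antiholomorphic part). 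To see this I would start from $a_s^\alpha = -g^{\ol\beta\alpha}g_{s\ol\beta}$ (Lemma~\ref{le:canlift}) and from $\varphi = g_{s\ol s} - g_{\alpha\ol s}g_{s\ol\beta}g^{\ol\beta\alpha}$ (Lemma~\ref{le:varphi_0}), differentiate $\varphi$ covariantly in the fiber directions, and match. The key structural input is that the canonical lift is characterized by harmonicity of $\ol\pt v|\cX_s$, i.e.\ the identity $g^{\ol\beta\gamma}A^\alpha_{s\ol\beta;\gamma}=0$ established in Proposition~\ref{pr:harmrep}, together with the symmetry $A_{s\ol\beta\ol\delta}=A_{s\ol\delta\ol\beta}$ of Corollary~\ref{co:symm}; these are exactly what collapses the raw commutator expression into a gradient.

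The second feature to account for is the term $-m\cdot\varphi\cdot\mathrm{id}$. This is \emph{not} present when $[\ol v, v]$ acts on ordinary functions; it appears because $L_{[\ol v,v]}$ is being applied to sections (and differential forms) with values in $\cK_{\cX/S}^{\otimes m}$, so the Lie derivative along the vertical vector field $[\ol v,v]$ must be interpreted via the chosen connection on that line bundle, whose curvature is $-m\,\omega_\cX$ (Lemma~\ref{le:ddb}, with the factor $m$). Concretely, writing $[\ol v,v] = -\varphi^{;\alpha}\pt_\alpha + \varphi^{;\ol\beta}\pt_\ol\beta =: W$, the Lie derivative on a $\cK^{\otimes m}$-valued object splits as the ``naive'' bracket $[\,W,\,\cdot\,]$ plus a zeroth-order term equal to the connection form contracted with $W$; since the connection form of $(\cK_{\cX/S},g^{-1})^{\otimes m}$ is $m\,\pt\log g$ and $\pt\log g$ pairs with the fiber components of $W$ to produce (after using $\pt\bar\pt\log g = \omega_\cX$ and the very definition of $\varphi$) precisely $m\varphi$, one obtains the stated $-m\varphi\cdot\mathrm{id}$. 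I would make this precise by computing $W \lrcorner\, (m\,\pt\log g)$ and noting $-\varphi^{;\alpha}(\log g)_{;\alpha} = -\varphi^{;\alpha}g_{s\ol\beta}g^{\ol\beta}{}_\alpha\cdots$, i.e.\ tracking that $W$ is built from $g^{\ol\beta\alpha}$ times first derivatives of $g$ exactly so that the pairing reconstitutes $\varphi$.

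The main obstacle I anticipate is the bookkeeping in the direct commutator computation: expanding $[\ol v, v]$ produces a sum of terms involving $\pt_\ol s a_s^\alpha$, $\pt_s a_\ol s^\ol\beta$, and cross terms $a_\ol s^\ol\beta\pt_\ol\beta a_s^\alpha$, and to recognize the result as $-\varphi^{;\alpha}$ one must convert ordinary derivatives of $g_{\alpha\ol\beta}$ and $g^{\ol\beta\alpha}$ in the $s$ and $\ol s$ directions into covariant fiber derivatives and into the tensors $g_{s\ol\beta}$, $A^\alpha_{s\ol\beta}$, invoking the Kähler--Einstein equation $\pt_s\pt_\ol s\log g = g_{s\ol s}$ and the harmonicity/symmetry identities at the right moments. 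This is the same kind of index manipulation already carried out in the proof of Proposition~\ref{pr:elleq}, and I would organize it the same way: first handle the purely local commutator as a vertical vector field, then separately handle the line-bundle contribution, and finally combine. No genuinely new idea beyond Proposition~\ref{pr:harmrep}, Corollary~\ref{co:symm}, Lemma~\ref{le:varphi_0} and Lemma~\ref{le:ddb} should be needed.
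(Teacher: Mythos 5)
Your first step --- computing the vertical field $[\ol v, v]$ directly from $a_s^\alpha=-g^{\ol\beta\alpha}g_{s\ol\beta}$ and matching its $\pt_\alpha$-coefficient with $-\varphi^{;\alpha}$ via Lemma~\ref{le:varphi_0} --- is exactly the paper's route and works. Two small corrections there: the cross term $a_s^\alpha\pt_\alpha a_{\ol s}^{\ol\beta}$ contributes to the $\pt_{\ol\beta}$-coefficient, not to the $\pt_\alpha$-coefficient; and harmonicity of $A_s$ is not needed for this lemma (it enters Proposition~\ref{pr:elleq}) --- what is used is only that $g_{s\ol s}, g_{s\ol\beta}, g_{\alpha\ol\beta}$ are all second derivatives of the single potential $\log g$, equivalently the symmetry of Corollary~\ref{co:symm}.

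The genuine gap is your mechanism for the zeroth-order term $-m\varphi$. You propose it as the connection form contracted with the vertical field $W=[\ol v,v]$, i.e. $W\lrcorner(m\,\pt\log g)=-m\,\varphi^{;\alpha}\pt_\alpha\log g$. This cannot equal $-m\varphi$: the quantity $\varphi^{;\alpha}\pt_\alpha\log g$ is frame dependent (under a holomorphic change of fiber coordinates $\log g$ shifts by $\log|\det J|^2$), while $\varphi$ is a global function, and no identity turns a first derivative of $\varphi$ paired with a first derivative of $\log g$ into $\varphi$; appealing to $\pt\ol\pt\log g=\omega_\cX$ does not help after you have already contracted with a $1$-form. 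Moreover, when the bracket in \eqref{eq:vvb} is taken with covariant derivatives --- which is how the lemma is applied in Lemma~\ref{le:lvvpsi} --- the connection-form contraction with $W$ is already absorbed in the bracket term, so your proposed extra term would be double counting in any case. The actual origin of $-m\varphi$ is different: $L_{[\ol v,v]}$ is used as the commutator $L_{\ol v}L_v-L_vL_{\ol v}$ of the twisted Lie derivatives (this is precisely how it enters \eqref{eq:Lolvv}), and commuting the two covariant differentiations along $\ol v$ and $v$ produces the curvature of $(\cK_{\cX/S}^{\otimes m},g^{-m})$, namely $-m\,\omega_\cX$ by Lemma~\ref{le:ddb}, evaluated on the pair of \emph{horizontal lifts} $(v,\ol v)$ --- not contracted with the vertical field $W$. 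By Definition~\ref{de:varphi} this evaluation is $-m\bigl(g_{s\ol s}+a_{\ol s}^{\ol\beta}g_{s\ol\beta}+a_s^\alpha g_{\alpha\ol s}+a_s^\alpha a_{\ol s}^{\ol\beta}g_{\alpha\ol\beta}\bigr)=-m\varphi$, which is the paper's one-line computation. Replace your pairing argument by this curvature-on-horizontal-lifts computation and the proof closes.
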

\begin{proof}
We first compute the vector field $[\ol v, v]$ on the fibers:
\begin{gather*}
[\ol v, v]= [\pt_\ol s + a^\ol\beta_\ol s \pt_\ol\beta, \pt_{s} +
a^\alpha_{s} \pt_\alpha ]\hspace{5cm} \\= \left(\pt_\ol s (a^\alpha_ s) +
a^\ol\beta_\ol s a^\alpha_{s|\ol\beta}\right)\pt_\alpha - \left( \pt_s
(a^\ol\beta_\ol s) + a^\alpha_{s}a^\ol\beta_{\ol
s|\alpha}\right)\pt_\ol\beta.
\end{gather*}
Now
\begin{gather*}
\pt_\ol s (a^\alpha_ s) = -\pt_\ol s (g^{\ol\beta\alpha}g_{s\ol\beta}) =
g^{\ol\beta\sigma} g_{\sigma\ol s| \ol \tau}g^{\ol\tau
\alpha}g_{s\ol\beta} - g^{\ol\beta\alpha}g_{s\ol \beta|\ol s} \\
\hspace{5cm} = g^{\ol\beta\sigma}a_{\ol s
\sigma;\ol\tau}g^{\ol\tau\alpha}a_{s\ol\beta} - g^{\ol\beta\alpha} g_{s\ol
s; \ol\beta}
\end{gather*}
Now \eqref{eq:varphi} implies that the coefficient of $\pt_\alpha$ is
$-\varphi^{;\alpha}$. In the same way the coefficient of $\pt_\ol \beta$
is computed.

Next, we compute the contribution of the connection on $\cK^{\otimes
m}_{\cX/S}$ which we denote by $[\ol v, v]_{\cK^{\otimes m}_{\cX/S}}$. We
use \eqref{eq:ddb}:
\begin{gather*}
[\pt_\ol s + a^\ol\beta_\ol s \pt_\ol\beta, \pt_{s} + a^\alpha_{s}
\pt_\alpha]_{\cK^{\otimes m}_{\cX/S}}\hspace{5cm}  \\ =-m\left(g_{s\ol s}
+ a^\ol\beta_{\ol s} g_{s\ol\beta} + a^\alpha_s g_{\alpha\ol s} +
a^\ol\beta_\ol s a^\alpha_s g_{\alpha\ol\beta} \right) = -m \varphi.
\end{gather*}
\end{proof}

\begin{lemma}\label{le:lvvpsi}
\begin{equation}\label{eq:lvvpsi}
\langle L_{[\ol v, v]} \psi^k , \psi^\ell  \rangle = -m \langle \varphi \psi^k, \psi^\ell\rangle
= - m \int_{\cX_s} (\Box + 1)^{-1}(A_s \cdot A_\ol s) \psi^k \psi^\ol\ell \, g \, dV
\end{equation}
\end{lemma}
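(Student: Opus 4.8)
The plan is to feed Lemma~\ref{le:vvb} into the inner product and then to kill the ``bracket'' part using that $\psi^k$ and $\psi^\ell$ are harmonic for \emph{both} $\pt$ and $\ol\pt$ (Lemma~\ref{le:boxdboxdb}); the second equality in \eqref{eq:lvvpsi} is then nothing but a substitution of the elliptic equation of Proposition~\ref{pr:elleq}.

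First I would invoke Lemma~\ref{le:vvb}: on the fiber $\cX_s$,
$$
L_{[\ol v,v]}\psi^k = L_W\psi^k - m\,\varphi\,\psi^k,\qquad
W=-\varphi^{;\alpha}\pt_\alpha+\varphi^{;\ol\beta}\pt_{\ol\beta},
$$
where $L_W=[W,\textvisiblespace]$ is the Lie derivative appearing in Lemma~\ref{le:vvb}, which one evaluates on $\cK^{\otimes m}_{\cX_s}$--valued forms by Cartan's formula $L_W=D\,\iota_W+\iota_W\,D$, with $D=\pt+\ol\pt$ the Chern connection along the fibers and $\iota_W$ the contraction with the vertical vector field $W$. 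Pairing with $\psi^\ell$ and integrating over $\cX_s$ splits the left side as $\langle L_W\psi^k,\psi^\ell\rangle - m\,\langle\varphi\,\psi^k,\psi^\ell\rangle$, so the task reduces to two points.

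The key step is to show $\langle L_W\psi^k,\psi^\ell\rangle=0$. Here I would write $W=W'+W''$ with $W'=-\varphi^{;\alpha}\pt_\alpha$ of type $(1,0)$ and $W''=\varphi^{;\ol\beta}\pt_{\ol\beta}$ of type $(0,1)$. Since $\psi^k$ has bidegree $(p,n-p)$, Cartan's formula shows that $L_W\psi^k$ has components only in bidegrees $(p,n-p)$, $(p-1,n-p+1)$ and $(p+1,n-p-1)$, so only the $(p,n-p)$--part contributes when paired with $\psi^\ell$. As $\psi^k$ is $\ol\pt$--closed by hypothesis and $\pt$--closed by Lemma~\ref{le:boxdboxdb}, that part collapses to $\pt(\iota_{W'}\psi^k)+\ol\pt(\iota_{W''}\psi^k)$; and since $\psi^\ell$ is harmonic for both $\pt$ and $\ol\pt$, so $\pt^*\psi^\ell=\ol\pt^*\psi^\ell=0$, adjointness on the compact fiber $\cX_s$ gives
$$
\langle L_W\psi^k,\psi^\ell\rangle
=\langle\iota_{W'}\psi^k,\pt^*\psi^\ell\rangle+\langle\iota_{W''}\psi^k,\ol\pt^*\psi^\ell\rangle=0 .
$$

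For the remaining point I would write $\langle\varphi\,\psi^k,\psi^\ell\rangle=\int_{\cX_s}\varphi\,(\psi^k\cdot\psi^\ol\ell)\,g\,dV$ and substitute $\varphi=(\Box+1)^{-1}(A_s\cdot A_\ol s)$ from Proposition~\ref{pr:elleq}, obtaining
$$
\langle L_{[\ol v,v]}\psi^k,\psi^\ell\rangle = -\,m\int_{\cX_s}(\Box+1)^{-1}(A_s\cdot A_\ol s)\,(\psi^k\cdot\psi^\ol\ell)\,g\,dV ,
$$
which is \eqref{eq:lvvpsi}. The step I expect to be the main obstacle is the vanishing of $\langle L_W\psi^k,\psi^\ell\rangle$: the real content is that, once the curvature contribution of $\cK^{\otimes m}_{\cX/S}$ has been isolated as the scalar $-m\varphi$ in Lemma~\ref{le:vvb}, Cartan's formula leaves in bidegree $(p,n-p)$ only terms that are $\pt$-- or $\ol\pt$--exact for the \emph{fiberwise} operators, and these are orthogonal to the doubly harmonic form $\psi^\ell$. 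The bidegree bookkeeping and the sign conventions (which do not affect the final formula) are routine.
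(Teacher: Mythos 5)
Your proposal is correct and follows essentially the same route as the paper: split off the $-m\varphi$ term via Lemma~\ref{le:vvb}, kill the vector-field part against $\psi^\ell$ using the $\pt$- and $\ol\pt$-closedness of $\psi^k$ (Lemma~\ref{le:boxdboxdb}) together with $\pt^*\psi^\ell=\ol\pt^*\psi^\ell=0$, and then substitute $\varphi=(\Box+1)^{-1}(A_s\cdot A_{\ol s})$ from Proposition~\ref{pr:elleq}. Your Cartan-formula collapse of the $(p,n-p)$-component to $\pt(\iota_{W'}\psi^k)+\ol\pt(\iota_{W''}\psi^k)$ is exactly the paper's coordinate computation $[\varphi^{;\alpha}\pt_\alpha,\psi^k]'=\pt\big(\varphi^{;\alpha}\pt_\alpha\cup\psi^k\big)$, just written invariantly.
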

\begin{proof}
The $\pt$-closedness of the $\psi^k$ can be read as
$$
\psi^k_{;\alpha} = \sum_{j=1}^p
\psi_{
{\tiny\vtop{
\hbox{$\alpha_1,\ldots,\alpha,\ldots,\alpha_p\ol B_{n-p};\alpha_j\;$}\vskip-.8mm
\hbox{$\phantom{\alpha_1,\ldots,}{|\atop j} $}}}}.
$$
Hence
\begin{eqnarray*}
[\varphi^{;\alpha}\pt_\alpha, \psi^k_{A_p\ol B_{n-p}}]'& = &
\varphi^{;\alpha}\psi_{;\alpha} +  \sum_{j=1}^p \varphi^{;\alpha}_{\;
;\alpha_j} \psi^k_{ {\tiny\vtop{
\hbox{$\alpha_1,\ldots,\alpha,\ldots,\alpha_p\ol B_{n-p}\;$}\vskip-.8mm
\hbox{$\phantom{\alpha_1,\ldots,}{|\atop j} $}}}}\\
& = & \sum_{j=1}^p \big( \varphi^{;\alpha} \psi^k_{ {\tiny\vtop{
\hbox{$\alpha_1,\ldots,\alpha,\ldots,\alpha_p\ol B_{n-p}\;$}\vskip-.8mm
\hbox{$\phantom{\alpha_1,\ldots,}{|\atop j} $}}}}\big)_{;\alpha_j}\\
&=& \pt\big( \varphi^{;\alpha}\pt _\alpha\cup \psi^k\big).
\end{eqnarray*}
Now
\begin{gather*}
\langle [\varphi^{;\alpha}\pt_\alpha, \psi^k_{A_p\ol
B_{n-p}}],\psi^\ell\rangle = \langle [\varphi^{;\alpha}\pt_\alpha,
\psi^k_{A_p\ol B_{n-p}}]',\psi^\ell\rangle \qquad \\ \qquad=\langle
\pt\big( \varphi^{;\alpha}\pt _\alpha\cup \psi^k\big),\psi^\ell \rangle =
\langle \varphi^{;\alpha}\pt _\alpha\cup \psi^k , \pt^* \psi^\ell \rangle
=0.
\end{gather*}
In the same way we get
$$
\langle [\varphi^{;\ol\beta}\pt_\ol\beta, \psi^k_{A_p\ol
B_{n-p}}],\psi^\ell\rangle =0,
$$
and, according to Lemma~\ref{le:vvb}, we are left with the desired term.
\end{proof}

\begin{proposition}\label{pr:baseq}
In view of \eqref{eq:cup1} and \eqref{eq:cup2} we have
\begin{eqnarray}
\ol\pt(L_v\psi^k)'&=&  \pt(A_s\cup \psi^k)\label{eq:0} \\
  \ol\pt^*(L_v\psi^k)'&=& 0 \label{eq:4} \\
  \pt^*(A_s\cup\psi^k) &=&0 \label{eq:5} \\
 \ol\pt^* (L_\ol v\psi^k)'&=&  \pt^* (A_\ol s \cup \psi^k) \label{eq:6}\\
  \ol\pt(L_\ol v\psi^k)'&=& 0 \label{eq:1} \\
  \ol\pt^*(A_\ol s \cup \psi^k) &=&0 \label{eq:7}
\end{eqnarray}
\end{proposition}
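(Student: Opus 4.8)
The plan is to treat the six identities in four batches. The first, \eqref{eq:1}, is immediate from Lemma~\ref{le:lvpsi1}: since $(L_\ol v\psi^k)'=(-1)^p\ol\pt(\ol v\cup\psi^k)$, applying $\ol\pt$ and using $\ol\pt^2=0$ gives $\ol\pt(L_\ol v\psi^k)'=0$. The remaining five split into the fiberwise coclosedness statements \eqref{eq:5}, \eqref{eq:7}; the ``integrability'' identity \eqref{eq:0}, which packages the $\ol\pt_\cX$-closedness of $\psi^k$ on the total space; and the two delicate identities \eqref{eq:4}, \eqref{eq:6}. I would prove them in that order, since \eqref{eq:5} feeds into \eqref{eq:4}.

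For \eqref{eq:5}, $\pt^*(A_s\cup\psi^k)=0$, I would work entirely on $\cX_s$ with its \ke metric and the induced metric $g^{-m}$ on $\cK_{\cX_s}^{\otimes m}$, using the K\"ahler identity $\pt^*=\ii[\Lambda,\ol\pt]$. Because $A_s$ and $\psi^k|_{\cX_s}$ are both $\ol\pt$-closed and $\ol\pt$ is a derivation for the cup-contraction, $\ol\pt(A_s\cup\psi^k)=0$, so $\pt^*(A_s\cup\psi^k)=-\ii\,\ol\pt\bigl(\Lambda(A_s\cup\psi^k)\bigr)$. Commuting $\Lambda$ through the contraction gives $\Lambda(A_s\cup\psi^k)=c\,A_s\cup(\Lambda\psi^k)$ up to a numerical constant $c$, the ``diagonal'' term of the commutator dropping because $g^{\ol\delta\gamma}A^\alpha_{s\ol\delta}$ is symmetric in $(\gamma,\alpha)$ by Corollary~\ref{co:symm} while $\psi^k$ is alternating in those slots. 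Finally $\ol\pt(A_s\cup\Lambda\psi^k)=-A_s\cup\ol\pt(\Lambda\psi^k)$ and $\ol\pt\Lambda\psi^k=\Lambda\ol\pt\psi^k+\ii\,\pt^*\psi^k=0$, since $\psi^k|_{\cX_s}$ is $\ol\pt$-harmonic and, by Lemma~\ref{le:boxdboxdb}, also $\pt$-harmonic, hence $\pt^*$-closed. Identity \eqref{eq:7} is the conjugate computation, with the roles of $\pt$ and $\ol\pt$ interchanged and $A_\ol s$ now $\pt$-closed (both $\pt\psi^k=0$ and $\ol\pt^*\psi^k=0$ being used).

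For \eqref{eq:0}, $\ol\pt(L_v\psi^k)'=\pt(A_s\cup\psi^k)$, I would use that $\psi^k$ is a global $\ol\pt_\cX$-closed form on $\cX$ (Lemma~\ref{le:dolb}): then the total-space covariant derivative $D_\cX\psi^k$ for the bundle $\Omega^p_{\cX/S}\otimes\cK^{\otimes m}_{\cX/S}$, with $D_\cX=\pt_\cX+\ol\pt_\cX$, equals $\pt_\cX\psi^k$; applying Cartan's formula $L_v=\iota_v D_\cX+D_\cX\iota_v$ and separating bidegrees on $\cX_s$ then yields \eqref{eq:0}, the connection term producing the $\pt$ on the right and the curvature $-m\,\omega_\cX$ of $\cK^{\otimes m}_{\cX/S}$ entering through $\iota_v$. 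Equivalently \eqref{eq:0} can be checked in local coordinates from the component form of $\ol\pt_\cX\psi^k=0$ recorded in Section~\ref{ss:dolb} together with \eqref{eq:lvprime}, \eqref{eq:lvsecond} and $A^\alpha_{s\ol\beta}=a^\alpha_{s;\ol\beta}$; the only extra fiberwise input is the $\pt$-harmonicity of $\psi^k|_{\cX_s}$, used to discard $\pt_{\cX_s}\psi^k$.

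The remaining identities \eqref{eq:4}, \eqref{eq:6} are the heart of the matter and the main obstacle. For \eqref{eq:4}, $\ol\pt^*(L_v\psi^k)'=0$, the difficulty is that by \eqref{eq:lvprime} the form $(L_v\psi^k)'$ carries the non-tensorial term $\psi^k_{;s}$, so no purely tensorial argument applies. The plan is to compute $\ol\pt^*(L_v\psi^k)'$ in local coordinates: the contribution coming from $\psi^k_{;s}$ is rewritten using the fiberwise coclosedness $\ol\pt^*\psi^k=0$ together with Ricci commutators; the resulting curvature terms are matched against those of the $a_s^\alpha$- and $a^\alpha_{s;\alpha_j}$-terms, using $A^\alpha_{s\ol\beta}=a^\alpha_{s;\ol\beta}$, Corollary~\ref{co:symm}, the identity $L_v g_{\alpha\ol\beta}=0$ and the \ke equation \eqref{eq:ke}, and they cancel; what survives is a multiple of $\pt^*(A_s\cup\psi^k)$, which vanishes by \eqref{eq:5}. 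Identity \eqref{eq:6}, $\ol\pt^*(L_\ol v\psi^k)'=\pt^*(A_\ol s\cup\psi^k)$, is handled along the same lines, starting from $(L_\ol v\psi^k)'=(-1)^p\ol\pt(\ol v\cup\psi^k)$ and using $\pt$-harmonicity where $\ol\pt$-harmonicity was used above; here the right-hand side need not vanish, the asymmetry with \eqref{eq:4} reflecting that $\psi^k$ is $\ol\pt_\cX$-closed but not $\pt_\cX$-closed on $\cX$. The only place where a genuinely delicate cancellation of non-tensorial and curvature terms is required is this last computation.
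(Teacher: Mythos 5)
Most of your plan coincides with the paper's own argument: \eqref{eq:1} is obtained from Lemma~\ref{le:lvpsi1} exactly as in the text, and \eqref{eq:0}, \eqref{eq:5}, \eqref{eq:7} are proved there by fiberwise component computations resting on precisely the ingredients you list -- the global $\ol\pt$-closedness from Lemma~\ref{le:dolb}, the $\pt$- and $\ol\pt$-harmonicity of $\psi^k|_{\cX_s}$ from Lemma~\ref{le:boxdboxdb}, and the symmetry of $A_s$ from Corollary~\ref{co:symm}. Your K\"ahler-identity packaging of \eqref{eq:5} and \eqref{eq:7} (writing $\pt^*=\ii[\Lambda,\ol\pt]$ and commuting $\Lambda$ past the cup, the cross term dying by symmetry versus skew-symmetry) is a legitimate, slightly slicker variant of the contraction argument actually carried out in the paper, which kills the same term by the same symmetry after using $\pt^*\psi^k=0$ and $\ol\pt A_s=0$.

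The problem is your roadmap for \eqref{eq:4}, which you yourself call the heart of the matter. You claim that after the cancellations ``what survives is a multiple of $\pt^*(A_s\cup\psi^k)$, which vanishes by \eqref{eq:5}.'' This cannot happen for bidegree reasons: $\ol\pt^*(L_v\psi^k)'$ is a form of type $(p,n-p-1)$, whereas $A_s\cup\psi^k$ has type $(p-1,n-p+1)$, so $\pt^*(A_s\cup\psi^k)$ has type $(p-2,n-p+1)$; no multiple of it can appear as the residue of your computation, and \eqref{eq:4} cannot be reduced to \eqref{eq:5} in this way (note that in the paper \eqref{eq:5} is \emph{not} used for \eqref{eq:4}; it is needed later, in the curvature computation itself). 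The mechanism that actually closes \eqref{eq:4} is different and simpler: one differentiates the fiberwise coclosedness identity $g^{\ol\beta\gamma}\psi_{;\gamma}=0$, valid for all $s$, in the $s$-direction along the horizontal lift, using $\pt_s(\Gamma^\sigma_{\alpha\gamma})=-a^\sigma_{s;\alpha\gamma}$ and $\pt_s g^{\ol\beta\gamma}=g^{\ol\beta\sigma}a^\gamma_{s;\sigma}$; the resulting identities \eqref{eq:aux01}--\eqref{eq:aux03} cancel exactly against the three terms of \eqref{eq:lvprime} after applying $\ol\pt^*$, with nothing left over and, in this organization, no curvature terms at all. Your sketch of \eqref{eq:6} is similarly only a declaration that terms will cancel, but there at least the asserted right-hand side $\pt^*(A_\ol s\cup\psi^k)$ has the correct bidegree and the analogous componentwise computation does give it. So the identities are true and the overall strategy (direct local computation plus Lemma~\ref{le:lvpsi1}) is the right one, but the stated closing step for \eqref{eq:4} would fail as written and must be replaced by the $s$-differentiation of the coclosedness identity.
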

The proof of the above proposition is the technical part of this article
and will be given at the end of the manuscript.

{\it Proof of Theorem~\ref{th:curvgen}.} Again, we may set $i=j=s$ and use
normal coordinates at a given point $s_0\in S$.

We continue with \eqref{eq:Lolvv} and apply \eqref{eq:lvvpsi}. Let $G_\pt$
and $G_\ol\pt$ denote the Green's operators on the spaces of
differentiable $\cK_{\cX_s}$-valued $(p,q)$-forms on the fibers with
respect to $\Box_\pt$ and $\Box_\ol\pt$ resp. We know from
Lemma~\ref{le:boxdboxdb} that for $p+q=n$ the Green's operators $G_\pt$
and $G_\ol\pt$ coincide.

We compute $\langle (L_v\psi^k)', L_v\psi^\ell)'\rangle$: Since the
harmonic projection\\ $H ((L_v\psi^k)')=0$ vanishes for $s=s_0$, we have
\begin{gather*}
(L_v\psi^k)'= G_\ol\pt \Box_\ol\pt (L_v\psi^k)'= G_\ol\pt \ol\pt^*\ol\pt
(L_v\psi^k)' = \ol\pt^*G_\ol\pt \pt(A_s\cup\psi^k)
\end{gather*}
by \eqref{eq:4} and \eqref{eq:0}. The form $\ol\pt(L_v\psi^k)'=
\pt(A_s\cup \psi^k)$ is of type $(p,n-p+1)$ so that by
Lemma~\ref{le:boxdboxdb} on this space of such forms $G_\ol\pt=(\Box_\pt
+m)^{-1} $ holds.

Now
\begin{gather*}
\langle (L_v\psi^k)', (L_v\psi^\ell)'\rangle =\langle \ol\pt^*G_\ol\pt
\pt(A_s\cup\psi^k), (L_v\psi^\ell)' \rangle\\ = \langle G_\ol\pt
\pt(A_s\cup\psi^k), \pt (A_s \cup \psi^\ell) \rangle = \langle (\Box_\pt
+m)^{-1} \pt (A_s\cup \psi^k), \pt (A_s\cup \psi^\ell)\rangle\\
= \langle \pt^* (\Box_\pt +m)^{-1} \pt (A_s\cup \psi^k), A_s\cup
\psi^\ell\rangle.
\end{gather*}

Because of \eqref{eq:5}
\begin{gather*}
\langle (L_v\psi^k)', (L_v\psi^\ell)'\rangle = \langle (\Box_\pt
+m)^{-1}\Box_\pt (A_s \cup \psi^k) , A_s\cup \psi^\ell\rangle\\
= \langle A_s\cup \psi^k, A_s \cup\psi^\ell\rangle -m \langle (\Box+m
)^{-1}(A_s \cup \psi_k), A_s \cup \psi^\ell\rangle.
\end{gather*}
(For $(p-1, n-p+1)$-forms, we write $\Box=\Box_\pt=\Box_\ol\pt$.)
Altogether we have
\begin{equation}\label{eq:part2}
\langle L_v \psi^k , L_v \psi^\ell \rangle|_{s_0} = - m
\int_{\cX_s} (\Box +m)^{-1}(A_s\cup\psi^k)\cdot (A_\ol s\cup \psi^\ol\ell)\, g\, dV.
\end{equation}
Finally we need to compute $\langle L_\ol v\psi^k, L_\ol v \psi^\ell
\rangle$.

By equation \eqref{eq:3} we have that  $(\langle L_\ol v\psi^k)'', (L_\ol
v \psi^\ell)'' \rangle = \langle A_\ol s \cup \psi^k   , A_\ol s \cup
\psi^\ell \rangle$. Now Lemma~\ref{le:lvpsi1} implies that the harmonic
projections of the $(L_\ol v \psi^k)'$ vanish for all parameters $s$. So
\begin{gather*}
\langle (L_\ol v \psi^k)',  (L_\ol v \psi^\ell)'\rangle = \langle G_\ol\pt
\Box_\ol\pt (L_\ol v \psi^k)',  (L_\ol v \psi^\ell)'\rangle \\
\vtop{\hbox{$=$}\vskip-4mm\hbox{\tiny$\!\! \eqref{eq:1}$} } \langle
(G_\ol\pt \ol\pt^*\ol\pt L_\ol v \psi^k)', (L_\ol v \psi^\ell)'\rangle =
\langle
(G_\ol\pt \ol\pt L_\ol v \psi^k)', \ol\pt (L_\ol v \psi^\ell)'\rangle\\
\vtop{\hbox{$=$}\vskip-4mm\hbox{\tiny$\!\! \eqref{eq:6}$} } \langle
G_\ol\pt \pt^*(A_\ol s \cup \psi^k), \pt^*(A_\ol\ s\cup \psi^\ell)
\rangle. \hspace{3cm}
\end{gather*}
Now the $(p+1, n-p)$-form $\ol\pt^*(L_\ol v\psi^k)'=  \pt^* (A_\ol s \cup
\psi^k)$ is orthogonal to both the spaces of $\ol\pt$- and $\pt$-harmonic
forms. On these, we have by Lemma~\ref{le:boxdboxdb}
$$
\Box_\ol\pt=\Box_\pt - m\cdot id.
$$
We see that all eigenvalues of $\Box_\pt$ are larger or equal to $m$ for
$(p,n-p-1)$-forms.

{\bf Claim.} {\it Let $\sum_\nu \lambda_\nu \rho_\nu$ be the eigenfunction
decomposition of $A_\ol s\cup \psi^k$. Then all $\lambda_\nu > m$ or
$\lambda_0=0$. In particular $(\Box - m)^{-1}(A_\ol s\cup \psi^k)$
exists.}

In order to verify the claim, we consider $\pt^*(A_\ol s\cup \psi^k)=
\sum_\nu \pt^*(\rho_\nu)$ with
$$
\Box_\pt \pt^*(\rho_\nu) = \lambda_\nu \pt^*(\rho_\nu) = \Box_\ol\pt
\pt^*(\rho_\nu) + m \cdot\pt^*(\rho_\nu).
$$
This fact implies that $\sum_\nu \pt^*(\rho_\nu)$ is also the
eigenfunction expansion with respect to $\Box_\ol\pt$ and eigenvalues
$\lambda_\nu -m\geq 0$ of $\pt^*(A_\ol s\cup \psi^k)=\ol\pt^*(L_\ol v
\psi^k)$. The latter is orthogonal to the space of $\ol\pt$-harmonic
functions so that $\lambda_\nu-m=0$ does not occur. (The harmonic part of
$A_\ol v\cup \psi^k$ may be present though.) This shows the claim.

Now
$$
G_\ol\pt  \pt^*(A_\ol s \cup \psi^k) = (\Box_\pt -m)^{-1} \pt^*(A_\ol s \cup \psi^k)
$$
so that \eqref{eq:7} implies
\begin{gather*}
\langle (L_\ol v \psi^k)',  (L_\ol v \psi^\ell)'\rangle = \langle
(\Box_\pt -m)^{-1} \Box_\pt (A_\ol s \cup \psi^k) ,A_\ol s \cup \psi^\ell
\rangle \\
=  \langle A_\ol s \cup \psi^k ,A_\ol s \cup \psi^\ell \rangle + m\cdot
\langle (\Box_\pt -m)^{-1} (A_\ol s \cup \psi^k) ,A_\ol s \cup \psi^\ell
\rangle.
\end{gather*}
Now \eqref{eq:3} yields the final equation (again with $\Box_\ol\pt =
\Box_\pt = \Box$ for $(p+1,n-p-1)$-forms)
\begin{equation}\label{eq:part3}
\langle L_\ol v \psi^k,  L_\ol v \psi^\ell\rangle = m \int_{\cX_s} (\Box - m)^{-1}( A_\ol s \cup \psi^k)
\cdot (A_s \cup \psi^\ol\ell) \, g\, dV.
\end{equation}
The main theorem follows from \eqref{eq:lvvpsi}, \eqref{eq:part2},
\eqref{eq:Lolvv}, and \eqref{eq:part3}. \qed

\begin{proof}[Proof of Proposition~\ref{pr:baseq}]
We verify \eqref{eq:0}: We will need various identities. For simplicity,
we drop the superscript $k$. The tensors below are meant to be
coefficients of alternating forms on the fibers, i.e.\ skew-symmetrized.
\begin{equation}\label{eq:aux1}
\psi_{;s\ol\beta_{n+1}}= \psi_{;\ol\beta_{n+1}s} - m \cdot g_{s\ol\beta_{n+1}} \psi=
 m\cdot a_{s\ol\beta_{n+1}}
\end{equation}
\begin{gather} \label{eq:aux2}
\psi_{;\alpha\ol\beta_{n+1}}= \psi_{;\ol\beta_{n+1}\alpha}- m\cdot
g_{\alpha\ol\beta_{n+1}}\psi \hspace{5cm} \\ \nonumber - \sum^p_{j=1}
\psi_{\tiny \vtop{ \hbox{$\alpha_1,\ldots,\sigma,\ldots,\alpha_p, \ol
B_{n-p}$}\vskip-1.5mm\hbox{$ \phantom{\alpha_1,\ldots,}{|\atop j}$}}
}R^\sigma_{\; \alpha_j\alpha\ol\beta_{n+1}}-\sum^n_{j=p+1}
\psi_{\tiny\vtop{\hbox{$A_p \ol\beta_{p+1},
\ldots,\ol\tau,\ldots,\ol\beta_{n}  $}\vskip-1.5mm\hbox{$\phantom{A_p
\ol\beta_{p+1},
\ldots,}{|\atop j}  $}}} R^\ol\tau_{\:\ol\beta_j\alpha\ol\beta_n }\\
\nonumber = -m \cdot a_{s\ol\beta_{n+1}}\psi - \sum^p_{j=1}
a^\alpha_s\psi_{\alpha_1,\ldots,\sigma,\ldots,\alpha_p\ol
B_{n-p}}R^\sigma_{\; \alpha_j\alpha\ol\beta_{n+1}}
\end{gather}
\begin{gather}\label{eq:aux3}
a^\alpha_{s;\alpha_j\ol \beta_{n+1}} = A^\alpha_{s\ol\beta_{n+1};\alpha_j}
+ a^\sigma_s R^\alpha_{\; \sigma\alpha_j\ol\beta_{n+1}}
\end{gather}
Now, starting from \eqref{eq:lvprime} we get, using \eqref{eq:aux1},
\eqref{eq:aux2}, and \eqref{eq:aux3},
\begin{gather*}
\ol\pt L_v\psi' = \Big( \psi_{;s\ol\beta_{n+1}} +
A^\alpha_{s\ol\beta_{n+1}}\psi_{;\alpha} + a^\alpha_s
\psi_{;\alpha\ol\beta_{n+1}}  +
\sum^p_{j=1}a^\alpha_{s;\alpha_j\ol\beta_{n+1}}\psi_{\alpha_1,\ldots,\alpha,\ldots,\alpha_p,\ol
B_{n-p}}\\ \nonumber + \sum^p_{j=1} a^\alpha_{s;\alpha_j}
\psi_{\alpha_1,\ldots,\alpha,\ldots,\alpha_p,\ol B_{n-p};\ol\beta_{n+1}}
\Big) dz^{\ol\beta_{n+1}}\we dz^{A_p}\we dz^{\ol B_{n-p}}
\\ \nonumber 
= \Big(A^\alpha_{s\ol\beta_{n+1}}\psi_{;\alpha} + \sum^p_{j=1}
A^\alpha_{s\ol\beta_{n+1};\alpha_j}
\psi_{\alpha_1,\ldots,\alpha,\ldots,\alpha_p,\ol
B_{n-p}}\Big)dz^{\ol\beta_{n+1}}\we dz^{A_p}\we dz^{\ol B_{n-p}}
\end{gather*}
Because of the fiberwise $\pt$-closedness of $\psi$ this equals
\begin{gather*}
\sum^p_{j=1} \big(A^\alpha_{s\ol\beta_{n+1}} \psi_{\tiny \vtop{
\hbox{$\alpha_1,\ldots,\alpha,\ldots,\alpha_p, \ol
B_{n-p}$}\vskip-1.5mm\hbox{$ \phantom{\alpha_1,\ldots,}{|\atop j}$}}  }
\big)_{;\alpha_j}
dz^{\ol\beta_{n+1}}\we dz^{A_p}\we dz^{\ol B_{n-p}} \\
= (-1)^n \sum^p_{j=1} \big(A^\alpha_{s\ol\beta_{n+1}}
\psi_{\alpha,\alpha_2,\ldots,\alpha_p, \ol B_{n-p}} \big)_{;\alpha_1}
dz^{\alpha_1}\we dz^{A_{p-1}}\we dz^{\ol\beta_1}\we\ldots\we
dz^{\ol\beta_{n+1}}\\ \nonumber
=\pt \Big((-1)^n A^\alpha_{s\ol\beta_{n+1}}
\psi_{\alpha,\alpha_2,\ldots,\alpha_p,\ol\beta_{p+1},\ldots,\ol\beta_n}
dz^{A_{p-1}}\we dz^{\ol B_{n+1}}\Big) = \pt\big( A_s \cup \psi\big).
\end{gather*}
This shows \eqref{eq:0}.

Next, we prove \eqref{eq:4}. We begin with \eqref{eq:lvbprime}. We first
note
\begin{equation*}\label{eq:dsGamma}
\pt_s(\Gamma^\sigma_{\alpha\gamma})= -a^\sigma_{s; \alpha\gamma}
\end{equation*}
which follows in a straightforward way. Now this equation implies
\begin{gather*}
\pt_s(\psi_{;\gamma}) = \psi_{;s\gamma} - \sum^p_{j=1}
a^\sigma_{s;\alpha_j\gamma}\psi_{\tiny\vtop{\hbox{$\alpha_1,
\ldots,\sigma,\ldots,\alpha_p \ol B_{n-p}
$}\vskip-1.5mm\hbox{$\phantom{\alpha_1, \ldots,}{|\atop j} $} }}
\end{gather*}
so that (with $g^{\ol\beta_n\gamma}\psi_{;\gamma}=0$ and $\pt_s
g^{\ol\beta_n\gamma}= g^{\ol\beta_n\sigma} a^\gamma_{s;\sigma} $)
\begin{gather}\label{eq:aux01}
g^{\ol\beta_n\gamma}\psi_{;s\gamma} = -\psi_{;\gamma}
g^{\ol\beta_n\sigma} a^\gamma_{s;\sigma} +
\sum^p_{j=1}g^{\ol\beta_n\gamma}
a^\sigma_{s;\alpha_j\gamma}\psi_{\tiny\vtop{\hbox{$\alpha_1,
\ldots,\sigma,\ldots,\alpha_p \ol B_{n-p}
$}\vskip-1.5mm\hbox{$\phantom{\alpha_1, \ldots,}{|\atop j} $} }}
\end{gather}
follows. Next, since fiberwise $\psi$ is $\ol\pt^*$-closed,
\begin{gather}\label{eq:aux02}
g^{\ol\beta_n\gamma} (a^\alpha_s\psi_{;\alpha})_{;\gamma}=
g^{\ol\beta_n\gamma} a^\alpha_{s;\gamma}\psi_{;\alpha},
\end{gather}
and with the same argument
\begin{gather}\label{eq:aux03}
g^{\ol\beta_n\gamma} \big(\sum^p_{j=1}
a^\sigma_{s;\alpha_j}\psi_{\tiny\vtop{\hbox{$\alpha_1,
\ldots,\sigma,\ldots,\alpha_p \ol B_{n-p}
$}\vskip-1.5mm\hbox{$\phantom{\alpha_1, \ldots,}{|\atop j} $} }}
\big)_{;\gamma} = g^{\ol\beta_n\gamma} \sum^p_{j=1}
a^\sigma_{s;\alpha_j\gamma}\psi_{\tiny\vtop{\hbox{$\alpha_1,
\ldots,\sigma,\ldots,\alpha_p \ol B_{n-p}
$}\vskip-1.5mm\hbox{$\phantom{\alpha_1, \ldots,}{|\atop j} $} }}.
\end{gather}
Now $\ol\pt^*(L_v\psi')=0$ follows from \eqref{eq:aux01},
\eqref{eq:aux02}, and \eqref{eq:aux03}.

We come to the $\pt^*$-closedness \eqref{eq:5}  of $A_s\cup \psi$. We need
to show that
$$
\big(A^\alpha_{s\ol\beta_{n+1}}\psi_{\alpha,\alpha_2,\ldots,\alpha_p,\ol\beta_{p+1},\ldots,\beta_n}
\big)_{;\ol\delta}g^{\ol\delta\alpha_p}
$$
vanishes. Since $\pt^*\psi=0$ the above quantity equals
$$
A^\alpha_{s\ol\beta_{n+1};\ol\delta} \psi_{\alpha,\alpha_2,\ldots,\alpha_p,\ol B_{n-p}}g^{\ol\delta\alpha_p}.
$$
Because of the $\ol\pt$-closedness of $A_s$ this equals
$$
(A^{\alpha\alpha_p}_{s})_{;\ol \beta_{n+1}}\psi_{\alpha,\alpha_2,\ldots,\alpha_p,\ol B_{n-p}}g^{\ol\delta\alpha_p}.
$$
However,
$$
A^{\alpha\alpha_p}_s=A^{\alpha_p\alpha}_s
$$
whereas $\psi$ is skew-symmetric so that also this contribution vanishes.

The proof of \eqref{eq:6}, \eqref{eq:1}, and \eqref{eq:7} is similar, we
remark that \eqref{eq:1} follows from Lemma~\ref{le:lvpsi1}.
\end{proof}


\begin{thebibliography}{MM-M}

\bibitem[A-P]{abate-patrizio} Abate, M., Patrizio, G.: Holomorphic
    curvature  of Finsler metrics and complex geodesics. J.\ Geom.\ Anal.\ {\bf 6}, 341--363
    (1996).

\bibitem[AU]{aub} Aubin, T.: Equation du type de Monge-Ampère sur les
    variétés Kähleriennes compactes, C. R. Acad. Sci. Prais {\bf 283}, 119--121(1976) /
    Bull.\ Sci.\ Math.\ {\bf 102}, 63--95 (1978).

\bibitem[B-V]{b-v} Bedulev E., Viehweg, E.: Shafarevich conjecture for
    surfaces of general type over function fields. Invent.\ math.\ {\bf 139},
    603--615 (2000).

\bibitem[B]{berndtsson} Berndtsson, B.: Curvature of vector bundles
    associated to holomorphic fibrations. arXiv:math/0511225v2 [math.CV] 20 Aug 2007.

\bibitem[B1]{berndtsson-pre} Berndtsson, B.: Strict and nonstrict
    positivity of direct image bundles, preprint, personal communication.

\bibitem[BGS]{bgs} Bismut, J.-M.; Gillet, H.; Soul\'e, Ch.: Analytic
    torsion and holomorphic determinant
     bundles I, II, III. Comm.\ Math.\
    Phys.\ 115 (1988), 49--78, 79--126, 301--351.

\bibitem[C-Y]{c-y} Cheeger, J., Yau, S.-T.: A lower bound for the heat
    kernel. Commun.\ Pure Appl.\ Math.\ {\bf 34}, 465--480 (1981).

\bibitem[C-Y]{cheng-yau} Cheng, S.Y., Yau, S.T.: On the existence of a
    complete K\"ahler metric on noncompact complex manifolds and the regularity of
    Fefferman's equation. Comm.\ Pure Appl.\ Math. {\bf 33}, 507--544 (1980).

\bibitem[DE]{demasantacruz} Demailly, J.P.: Algebraic criteria for
    Kobayashi hyperbolic projective varieties and jet differentials.
    Lecture notes Santa Cruz (1995). Electronically published.

\bibitem[F-S]{f-s:extremal} Fujiki, A., Schumacher, G.: The moduli space
    of extremal compact \ka  manifolds and generalized Weil-Petersson metrics.
    Publ.\ Res.\ Inst.\ Math.\ Sci.\ {\bf 26}, 101--183 (1990).

\bibitem[KE-KO]{keko} Kebekus, S., Kovács, S.: Families of canonically
    polarized varieties over surfaces.  Invent.\ Math.\ {\bf 172},
    657--682 (2008).

\bibitem[KB]{kobook} Kobayashi, Sh.: Hyperbolic complex spaces.
    Grundlehren der Mathematischen Wissenschaften. {\bf 318}. Berlin:
    Springer, (1998).

\bibitem[KO-KO]{kk} Kollár, J., Kovács, S.: Log canonical singularities
    are Du Bois. arXiv:0902.0648.

\bibitem[K-M]{kolmo} Kollár, J., Mori, S.: Birational Geometry of
    Algebraic Varieties. Cambridge University Press 1998. Translated from
    Souyuuri Kikagaku published by Iwanami Shoten, Publishers, Tokyo,
    1998.

\bibitem[KV1]{kv1} Kovács, S.: Smooth families over rational and elliptic
    curves. J.\ Alg.\ Geom.\ {\bf 5} 369--385 (1996).

\bibitem[KV2]{kv2} Kovács, S.: On the minimal number of singular fibres in
    a family of surfaces of general type. Journ.\ Reine Angew.\ Math.\ {\bf
    487}, 171--177 (1997).

\bibitem[KV3]{kv3} Kovács, Algebraic hyperbolicity of finene moduli
    spaces, J.\ Alg.\ Geom.\ {\bf 9} (2000) 165--174.

\bibitem[M]{m} Migliorini, L.: A smooth family of minimal surfaces of
    general type over a curve of genus at most one is trivial. J.\ Alg.\ Geom.\
    {\bf 4}, 353--361 (1995).

\bibitem[M-T]{mouta} Mourougane, Ch., Takayama, S.: Hodge metrics and the
    curvature of higher direct images. arXiv:0707.3551v1 [math.AG] 24 Jul 2007

\bibitem[L-S-Y]{yau} Liu, K., Sun, X., Yau, S.T.: Good geometry and moduli
    spaces, I. To appear.

\bibitem[SCH1]{sch:teich} Schumacher, G.: Harmonic maps of the moduli
    space of compact Riemann surfaces.  Math.\ Ann.\ {\bf 275}, 455--466 (1986).

\bibitem[SCH2]{sch:curv} Schumacher, G.: The curvature of the
    Petersson-Weil metric on the moduli space of Kähler-Einstein manifolds.
    Ancona, V.\ (ed.) et al., Complex analysis and geometry. New York:
    Plenum Press. The University Series in Mathematics. 339--354 (1993).

\bibitem[SCH3]{sch:framas} Schumacher, G.: Asymptotics of \ke metrics on
    quasi-projective manifolds and an extension theorem on
    holomorphic maps. Math.\ Ann.\ {\bf 311}, 631--645 (1998)

\bibitem[SCH4]{sch-preprint} Schumacher, G.:  Positivity of relative
    canonical bundles for families of canonically polarized manifolds,
    arXiv:0808.3259v2 [math.CV]

\bibitem[SIU1]{siu:canlift} Siu, Y.-T.: Curvature of the Weil-Petersson
    metric in the moduli space of compact Kähler-Einstein manifolds of negative first
    Chern class. Contributions to several complex variables, Hon.\ W.\ Stoll,
    Proc.\ Conf.\ Complex Analysis, Notre Dame/Indiana 1984, Aspects Math.\ E9,
    261--298 (1986).

\bibitem[ST]{st} Sturm, K.Th.: Heat kernel bounds on manifolds. Math.\
    Ann.\ {\bf 292} 149--162 (1992).

\bibitem[TR]{tr} Tromba A.J.: {\em On a natural affine connection on the
    space of almost complex structures and the curvature of the \tei space
    with respect to its \wp metric}. Man.\ Math.\ {\bf 56}
    (1986) 475--497.

\bibitem[V-Z1]{v-z} Viehweg, E.; Zuo, K.: On the Brody hyperbolicity of
    moduli spaces for canonically polarized manifolds. Duke Math.\
    J.\ {\bf 118}, 103--150 (2003).

\bibitem[V-Z2]{v-z2} Viehweg, E.; Zuo, K.: On the isotriviality of
    families of projective manifolds over curves, J.\ Algebraic Geom.\
    {\bf 10}, 781--799 (2001).

\bibitem[WO]{wo} Wolpert, S.: Chern forms and the Riemann tensor
    for the moduli space of curves. Invent.\ Math.\ {\bf 85}, 119--145
    (1986).

\bibitem[Y]{calyau} Yau, S.T.: On the Ricci curvature of a compact Kähler
    manifold and the complex Monge-Ampère equation, Commun.\ Pure Appl.\
     Math.\ {\bf 31}, 339--411 (1978).

\bibitem[YO]{yos} Yosida, K.: On the existence of the resolvent
    kernel for elliptic differential operator in a compact Riemann space.
    Nagoya Math.\ J.\ {\bf 4} (1952) 63--72.



\end{thebibliography}
\end{document}